\documentclass[letterpaper, 10pt,reqno]{amsart}

\usepackage[usenames,dvipsnames]{xcolor}

\usepackage[portrait,margin=3cm]{geometry}
\usepackage{mathrsfs}

\usepackage{amssymb,amsthm,amsfonts,amsbsy,amstext,latexsym}
\usepackage[reqno]{amsmath}

\usepackage{color}
\usepackage{graphicx}
\usepackage{bbm}
\usepackage{comment}
\usepackage{a4wide,graphicx, listings,lscape, epstopdf, units, textcomp, fancyhdr, url, soul, color}
\usepackage{tikz}
\usetikzlibrary{shapes.geometric}
\usepackage[textsize=small]{todonotes}
\usepackage{enumitem}

\usepackage{mathtools}
\mathtoolsset{showonlyrefs}

\definecolor{MyDarkblue}{rgb}{0,0.08,0.50}
\definecolor{Brickred}{rgb}{0.65,0.08,0}

\usepackage{hyperref,upref}
\hypersetup{
	colorlinks=true,       
	linkcolor=MyDarkblue,          
	citecolor=Brickred,        
	filecolor=red,      
	urlcolor=cyan           
}

\newtheorem*{theorem*}{Theorem}
\newtheorem{theorem}{Theorem}[section]
\newtheorem{lemma}[theorem]{Lemma}
\newtheorem{proposition}[theorem]{Proposition}
\newtheorem{corollary}[theorem]{Corollary}

\theoremstyle{definition}
\newtheorem{definition}[theorem]{Definition}

\newtheorem{remark}[theorem]{Remark}

\renewcommand{\P}{\mathbb{P}}

\newcommand{\Pv}{\mathbb{P}}


\newcommand{\CC}{\mathcal{C}}

\newcommand{\eps}{\varepsilon}










\newcommand{\Var}{{\rm Var}}

\newcommand{\CE}{{\mathcal{E}}}

\newcommand{\e}{{\mathrm e}}

\setcounter{secnumdepth}{3} 
\setcounter{tocdepth}{2}    

\newcommand{\R}{\mathbb{R}}
\newcommand{\N}{\mathbb{N}}
\newcommand{\Z}{\mathbb{Z}}

\renewcommand{\emptyset}{\varnothing}



\newcommand{\CA}{\mathcal {A}}
\newcommand{\CB}{\mathcal {B}}
\newcommand{\CD}{\mathcal {D}}
\newcommand{\CF}{\mathcal {F}}

\newcommand{\CI}{\mathcal {I}}

\newcommand{\CL}{\mathcal {L}}
\newcommand{\CM}{\mathcal {M}}
\newcommand{\CN}{\mathcal {N}}
\newcommand{\CO}{\mathcal {O}}
\newcommand{\CP}{\mathcal {P}}

\newcommand{\CR}{\mathcal {R}}
\newcommand{\CS}{\mathcal {S}}

\newcommand{\CX}{\mathcal {X}}

\newcommand{\CH}{\mathcal {H}}

\newcommand{\dist}{{\rm dist}}

\newcommand*{\wt}{\widetilde}

\newcommand*{\be}{\begin{equation}}
	\newcommand*{\ee}{\end{equation}}
\newcommand*{\ba}{\begin{aligned}}
	\newcommand*{\ea}{\end{aligned}}
\newcommand*{\barr}{\begin{array}{c}}
	\newcommand*{\earr}{\end{array}}
\def \toinp    {\buildrel {\Pv}\over{\longrightarrow}}
\def \toindis  {\buildrel {d}\over{\longrightarrow}}
\def \toas     {\buildrel {a.s.}\over{\longrightarrow}}

\newcommand*{\ind}{\mathbbm{1}}
\makeatletter
\def\namedlabel#1#2{\begingroup
	#2%
	\def\@currentlabel{#2}%
	\phantomsection\label{#1}\endgroup
}

\makeatother

\newcommand{\bes}{\begin{equation*}}
	\newcommand{\ees}{\end{equation*}}

\renewcommand{\P}[1]{\mathbb{P}\!\left(#1\right)}
\newcommand{\E}[1]{\mathbb{E}\left[#1\right]}

\renewcommand{\N}{\mathbb{N}}

\renewcommand{\d}{\mathrm{d}}

\renewcommand{\S}{\mathcal S}

\newcommand{\zni}{d_{T_n}(i)}

\newcommand{\inn}{i\in[n]}

\setlength\parindent{0pt}
\setlength{\parskip}{1ex plus 0.5ex minus 0.2ex}
\numberwithin{equation}{section}

\renewcommand{\e}{\mathrm{e}}

\renewcommand{\deg}{d}
\newcommand{\A}{\mathcal A_{\bar d}}
\newcommand{\B}{\mathcal B_{n,\delta}}
\newcommand{\wtA}{\wt{\mathcal A}_{\overline \ell}}
\newcommand{\wtB}{\wt{\mathcal B}_n}

\makeatletter
\newcommand{\leqnomode}{\tagsleft@true\let\veqno\@@leqno}
\newcommand{\reqnomode}{\tagsleft@false\let\veqno\@@eqno}
\makeatother
\newlength{\tagmarginsep} 
\setlength{\tagmarginsep}{1cm}

\tikzstyle{vertex}=[circle,fill=orange!60,minimum size=10pt,inner sep=0pt]
\tikzstyle{tedge} = [draw,ultra thick,->,>=stealth, orange]
\tikzstyle{esq}=[circle,fill=white,minimum size=10pt,inner sep=0pt]
\tikzstyle{up}=[<-,>=stealth]

\begin{document}
	
	\title[Joint properties of vertices with a given degree or label in the RRT]{On joint properties of vertices with a given degree or label in the random recursive tree}

	\date{\today}
	\keywords{Random recursive tree, Kingman coalescent, depth, label, graph distance, high degrees}
	
	\author[Lodewijks]{Bas Lodewijks}
	\address{Institut Camille Jordan, Univ.\ Lyon 1, Lyon, France, Univ.\ Jean Monnet, Saint-\'Etienne, France}
	\email{bas.lodewijks@univ-st-etienne.fr}

	\begin{abstract}
		In this paper, we study the joint behaviour of the degree, depth and label of and graph distance between high-degree vertices in the random recursive tree. We generalise the results obtained by Eslava~\cite{Esl16} and extend these to include the labels of and graph distance between high-degree vertices. The analysis of both these two properties of high-degree vertices is novel, in particular in relation to the behaviour of the depth of such vertices.\\
		In passing, we also obtain results for the joint behaviour of the degree and depth of and graph distance between any fixed number of vertices with a prescribed label. This combines several isolated results on the degree~\cite{KubPan07}, depth~\cite{Dev98,Mah91} and graph distance~\cite{Dob96,FengSuLiu06} of vertices with a prescribed label already present in the literature. Furthermore, we extend these results to hold jointly for any number of fixed vertices and improve these results by providing more detailed descriptions of the distributional limits.\\
		Our analysis is based on a correspondence between the random recursive tree and a representation of the Kingman $n$-coalescent.
	\end{abstract}
	
	\maketitle 
	
	\section{Introduction}\label{sec:intr}
	
	The random recursive tree model has, since its introduction by Na and Rapoport~\cite{NaRap70}, received a wealth of interest and many properties have been studied. This wide range of topics includes, among others, the degree distribution~\cite{Jan05,MahSmy92,MeirMoon88}, the degree of vertices with a prescribed label~\cite{Dev98,KubPan07}, the maximum degree~\cite{AddEsl18,BanBha20,DevLu95,GohSch02,Szy90}, the height of the tree~\cite{Pit94}, the insertion depth of the tree~\cite{Dev98,Mah91}, and the graph distance between vertices~\cite{Dob96,FengSuLiu06}. Beyond these statistics, real-world applications of random recursive trees have been considered as well~\cite{GasWir84,Moon74,NaHey82}. See also~\cite{Drm09,MahLue92} for two surveys on random trees that include a more extensive overview of the research literature on random recursive trees.
	
	Different approaches for studying the random recursive tree model have been considered throughout the literature. Using the recursive definition of the model and the fact that the random recursive tree with $n$ vertices is defined to be a uniform tree among all increasing trees with $n$ vertices (labelled trees where the vertices on a path from the root to any vertex have increasing labels) are among the most prevalent. Other methods include using continuous-time embedding in Crump-Mode-Jagers branching processes, first introduced by Athreya and Karlin for P\'olya urns in~\cite{AthrKar68} and later used for a wide range of recursive tree models such as the random recursive tree (see e.g.~\cite{Bham07,Iyer20,Jan04,Pit94}), P\'olya urns~\cite{Jan05} and a representation of Kingman's coalescent~\cite{AddEsl18,Esl16,Pit94}.
	
	In most studies found in the literature regarding the random recursive tree model, statistics like those mentioned above are considered in \emph{isolation}, rather then studying their \emph{joint behaviour}. As far as the author is aware, only a handful of papers consider the joint behaviour of different statistics for the random recursive tree. In~\cite{Esl16}, Eslava studies the depth of high-degree vertices, Banerjee and Bhamidi study the label size of the vertex attaining the maximum degree in~\cite{BanBha20}, and the author studies the labels of high-degree vertices in the more general weighted recursive tree model~\cite{Lod21}, of which the random recursive tree model is a particular example.
	
	The aim of this paper is to extend what is known about the joint behaviour of several statistics of the random recursive tree. We consider, in particular, two settings. First, we study the joint behaviour of the depth and label of and graph distance between any fixed number of vertices selected uniformly at random, conditionally on having a degree that exceeds a certain quantity. We combine, extend, improve and recover the results of the author~\cite{Lod21} (in the particular case of the random recursive tree) and Eslava~\cite{Esl16}. We also recover the results of Addario-Berry and Eslava~\cite{AddEsl18} and Eslava, the author, and Ortgiese~\cite{EslLodOrt21} (again, in the particular case of the random recursive tree). 
	
	Let $T_n$ denote the random recursive tree with $n$ vertices. Eslava considers in~\cite{Esl16} the vector $
	(d^i_n-\lfloor \log_2n\rfloor,(h^i_n-\mu \log n)/\sqrt{\sigma^2\log n})_{\inn}$,
	where $d^i_n$ and $h^i_n$ denote the degree and depth of the vertex with the $i^{\text{th}}$ largest degree (ties broken uniformly at random), respectively, and sets $\mu:=1-1/(2\log 2), \sigma^2:=1-1/(4\log 2)$. Eslava shows this vector converges in distribution along suitable subsequences $(n_t)_{t\in\N}$ to a marked point process on $(\Z\cup\{\infty\}) \times \R$, where the marks are independent standard normal random variables. The author proves a similar result for the vector 
	\be 
	(d^i_n-\lfloor \log_2n\rfloor,(\ell^i_n-\mu \log n)/\sqrt{(1-\sigma^2)\log n})_{\inn}
	\ee 
	in~\cite{Lod21}, where $\ell^i_n$ denotes the label of the vertex with degree $d^i_n$ (ties broken uniformly at random). Again, along suitable subsequences, this vector converges in distribution to a marked point process on $(\Z\cup \{\infty\})\times \R$, where the marks are independent standard normal random variables. Our results here combine these results to show that the vector 
	\be 
	(d^i_n-\lfloor \log_2n\rfloor,(h^i_n-\mu \log n)/\sqrt{\sigma^2\log n},(\ell^i_n-\mu \log n)/\sqrt{(1-\sigma^2)\log n})_{\inn}
	\ee 
	converges along suitable subsequences to a marked point process on $(\Z\cup\{\infty\})\times \R^2$, where the marks are i.i.d.\ copies of $( M\sqrt{1-\mu/\sigma^2}+N\sqrt{\mu/\sigma^2},M)$, with $M,N,$ two i.i.d.\ standard normal random variables. This recovers both results and, additionally, provides a novel and interesting dependence between the scaling limit of the depth and label of high-degree vertices. It describes exactly \emph{how large} the largest degrees in the tree are, as well as \emph{where} and \emph{when} they appear in the tree. This natural extension of the current knowledge provides a rather complete picture of the behaviour of high-degree vertices.
	
	Moreover, we also obtain the distributional convergence of the (properly rescaled) depth and label of and graph distance between any finite number of vertices selected uniformly at random, conditionally on their degrees growing infinitely large as $n\to\infty$. The graph distance between such high-degree vertices has not been studied previously, and we are, in particular, able to characterise the limiting law of the graph distance in terms of the limiting law of the depth of these vertices. 
	
	Second, we study the joint behaviour of the degree and depth of and graph distance between any fixed number of vertices with a prescribed label. This combines, extends, improves and recovers a range of results on the degree~\cite{Dev98,KubPan07} and depth~\cite{Dev98,Mah91} of and graph distance~\cite{Dob96,FengSuLiu06} between vertices with a prescribed label. Given any fixed $k\geq 2$ vertices with labels $(v_{i,n})_{i\in[k]}$ such that $v_{i,n}$ diverges with $n$, we obtain the joint distributional convergence of the degree and depth of and graph distance between vertices $v_{1,n}, \ldots, v_{k,n}$. Again, we characterise the limiting law of the graph distances in terms of those of the depths of vertices $v_{1,n},\ldots, v_{k,n}$, which is novel. 
	
	Our extensions of the aforementioned results arise mainly due to two contributions. First, we are able to analyse the joint behaviour of multiple statistics beyond what was known already in the literature. Second, we obtain these results for any finite number of vertices, whereas only a single vertex or single pair of vertices is considered in most results available to date. It is exactly the correlations that arise due to considering several statistics and many vertices at once that prove to be the most challenging aspects of the analysis. The improvement of the existing results is mostly due to the fact that considering the joint behaviour of several statistics allows us, in certain cases, to obtain more detailed descriptions of their limiting laws beyond what was known previously. 
	
	The analysis in this paper is based on the Kingman $n$-coalescent construction of the random recursive tree. This construction was first observed by Pittel in~\cite{Pit94} and later recovered and used by Addario-Berry and Eslava~\cite{AddEsl18}, and Eslava~\cite{Esl16,Esl21}. This construction provides several advantages compared to the more common recursive construction of the random recursive tree. First, rather than in the recursive construction in which distinct vertices have different arrival times (which influence their degree, depth, label, and graph distance), the coalescent construction allows for a perspective in which all vertices are exchangeable. Second, the coalescent construction enables a more natural decoupling of the statistics of distinct vertices, which provides us with tools to tackle the correlations between these statistics in a more refined manner. Finally, in particular the degree, label and depth of a vertex can be expressed in terms of random numbers of coin flips, simplifying the analysis of these statistics. The degree of a vertex equals the length of the first streak of heads, the label equals the step at which the first tails occurs and the depth equals the total number of tails thrown.  \\
		
	\noindent \textbf{Notation.}	Throughout the paper we use the following notation: we let $\N:=\{1,2,\ldots\}$ denote the natural numbers, set $\N_0:=\{0,1,\ldots\}$ and let $[t]:=\{i\in\N: i\leq t\}$ for any $t\geq 1$. For $x\in\R$, we let $\lceil x\rceil:=\inf\{n\in\Z: n\geq x\}$ and $\lfloor x\rfloor:=\sup\{n\in\Z: n\leq x\}$. For $x\in \R, k\in\N$, we let $(x)_k:=x(x-1)\cdots (x-(k-1))$ and $(x)_0:=1$ and use the notation $\bar d$ to denote a $k$-tuple $d=(d_1,\ldots, d_k)$ (the size of the tuple will be clear from the context), where the $d_1,\ldots, d_k$ are either numbers or sets. For sequences $(a_n,b_n)_{n\in\N}$ such that $b_n$ is positive for all $n$ we say that $a_n=o(b_n), a_n=\omega(b_n), a_n\sim b_n, a_n=\mathcal{O}(b_n)$ if $\lim_{n\to\infty} a_n/b_n=0,\lim_{n\to\infty} |a_n|/b_n=\infty, \lim_{n\to\infty} a_n/b_n=1$ and if there exists a constant $C>0$ such that $|a_n|\leq Cb_n$ for all $n\in\N$, respectively. For random variables $X,(X_n)_{n\in\N}$ we let $X_n\toindis X, X_n\toinp X$ and $X_n\toas X$ denote convergence in distribution, probability and almost sure convergence of $X_n$ to $X$, respectively. Also, let $\Phi:\R\to(0,1)$ denote the cumulative density function of a standard normal random variable.
	
	We also provide a table with the most important symbols used throughout the paper and their definitions, in order of appearance.\\
	
	\begin{table}[h]
		\begin{tabular}{ll}
			\hline
			\textbf{Symbol} & \textbf{Definition}\\
			\hline
			$T_n$ & Random recursive tree on $n$ vertices\\
			$\d_{T_n}(u)$ & In-degree of vertex $u$ in $T_n$\\
			$\text{dist}_{T_n}(u,v)$ & Graph distance between vertices $u,v$ in $T_n$\\
			$h_{T_n}(u)$ & Depth of vertex $u$ in $T_n$ (graph distance to the root,  $\text{dist}_{T_n}(u,1)$)\\
			$v^j$ & $j^{\text{th}}$ vertex in $T_n$, in decreasing order of in-degree\\
			$d_n^j$ & In-degree of $v^j$, $\d_{T_n}(v^j)$\\
			$h^j_n$ & Depth of $v^j$, $h_{T_n}(v^j)$\\
			$\mu$ & $1-\frac{1}{2\log 2}$\\
			$\sigma^2$ & $1-\frac{1}{4\log 2}$\\
			$(v_i)_{i\in[k]}$ & $k$ distinct vertices in $T_n$ selected uniformly at random\\
			$T^{(n)}$ & Kingman $n$-coalescent tree\\
			$\d_{T^{(n)}}(i)/\d_n(i)$ & In-degree of vertex $i$ in $T^{(n)}$ \\
			$h_{T^{(n)}}(i)/h_n(i)$ & Depth of vertex $i$ in $T^{(n)}$\\
			$\ell_{T^{(n)}}(i)/\ell_n(i)$ & Label of vertex $i$ in $T^{(n)}$ after relabelling (as in~\eqref{eq:sigmac}) \\
			$\CS_n(i)$ & Selection set of vertex $i$ in $T^{(n)}$\\
			$S_n(i)$ & $|\CS_n(i)|$\\
			$\overline{\CS_n}$ & $(\CS_n(i))_{i\in[k]}$\\
			$\tau_k$ & $\max\cup_{1\leq i<j\leq k}\big(\CS_n(i)\cap \CS_n(j)\big)$, the first coalescence of vertices $1,\ldots,k$\\
			$\CS_{n,1}(i)$ & Truncated selection set of vertex $i$ in $T^{(n)}$\\
			$\overline \CS_{n,1}$ & $(\CS_{n,1}(i))_{i\in[k]}$\\
			$\overline \CR_{n,1}$ & $(\CR_{n,1}(i))_{i\in[k]}$, where each element is an independent copy of $\CS_{n,1}(1)$\\
			$h_{n,1}(i)$ & Truncated depth of vertex $i$ in $T^{(n)}$\\
			$h_{n,2}(i)$ & $h_n(i)-h_{n,1}(i)$, the remaining depth\\
			\hline
		\end{tabular}
	\end{table} 
	
	\clearpage 
	
	\section{Definitions and main results}\label{sec:def}
	
	The random recursive tree model is defined as follows:
	
	\begin{definition}[Random recursive tree model]\label{def:rrt}
		Let $(T_n)_{n\in\N}$ be a sequence of trees. Initialise $T_1$ by a root with label $1$. For every $n\in\N$, construct $T_{n+1}$ from $T_n$ by adding a vertex with label $n+1$ to $T_n$ and connecting it by a directed edge to a vertex $v\in[n]$ which is selected uniformly at random.
	\end{definition}
	
	Due to the temporal nature of the random recursive tree model, it is natural to think of the edges as directed towards the root. Throughout, for any $n\in\N$ and $u,v\in[n]$, we write 
	\be \ba
	\d_{T_n}(u)&:=\text{in-degree of vertex $u$ in }T_n, \\
	\dist_{T_n}(u,v)&:=\text{graph distance between vertices }u,v\text{ in }T_n,\\
	h_{T_n}(u)&:=\text{depth of vertex $u$ in }T_n=\dist_{T_n}(u,1).
	\ea \ee 
	The graph distance between vertices $u$ and $v$ denotes the number of edge on the unique path between vertices. Here we do not take the direction of the edges into account. This only matters for the in-degree.
	
	Addario-Berry and Eslava study behaviour of high-degree vertices in the RRT  in~\cite{AddEsl18} and Eslava extends this to the joint convergence of the degree and depth of such high-degree vertices in~\cite{Esl16}. We further extend this joint convergence by including the rescaled label of the vertices as well in the following result.
	
	\begin{theorem}[Degree, depth and label of high-degree vertices in the RRT]\label{thrm:degdepthlabelrrt}
		Consider the random recursive tree (RRT) model as in Definition~\ref{def:rrt}. Let $v^1,v^2,\ldots, v^n$ be the vertices in the RRT in decreasing order of their in-degree $($\!where ties are split uniformly at random$)$ and let $(d_n^s,h_n^s,\ell_n^s )_{s\in[n]}$ denote their in-degree, depth, and label, respectively. Fix $\eps\in[0,1]$, define $\eps_n:=\log_2n-\lfloor \log_2n\rfloor$, and let $(n_t)_{t\in\N}$ be a positive, diverging, integer-valued sequence such that $\eps_{n_t}\to\eps$ as $t\to\infty$. Finally, let $(P_s)_{s\in\N}$ be the points of the Poisson point process $\CP$ on $\R$ with intensity measure $\lambda(\d x)=2^{-x}\log 2\,\d x$, ordered in decreasing order, let $(M_s,N_s)_{s\in\N}$ be two sequences of i.i.d.\ standard normal random variables and define $\mu:=1-1/(2\log 2)$ and $\sigma^2:=1-1/(4\log 2)$. Then, as $t\to\infty$, 
		\be \ba
		\Big({}&d_{n_t}^s-\lfloor \log_2n_t\rfloor, \frac{h_{n_t}^s-\mu\log n_t}{\sqrt{\sigma^2\log n_t}}, \frac{\log(\ell_{n_t}^s)-\mu\log n_t}{\sqrt{(1-\sigma^2)\log n_t}},s\in[n_t]\Big)\\
		&\toindis \Big(\lfloor P_s+\eps\rfloor , M_s\sqrt{1-\frac{\mu}{\sigma^2}}+N_s\sqrt{\frac{\mu}{\sigma^2}}, M_s, s\in\N\Big).
		\ea \ee 
	\end{theorem}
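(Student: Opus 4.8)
The plan is to deduce the convergence from the Kingman $n$-coalescent representation of the RRT established above, following the route of Eslava~\cite{Esl16} and Addario-Berry and Eslava~\cite{AddEsl18} but now transporting the label through the argument as well. Recall that $T_n$ is produced by running Kingman's $n$-coalescent, orienting each of the $n-1$ merges with an independent fair coin (one of the two merging blocks' representative becomes the parent of the other's), and labelling the resulting tree increasingly; for a uniformly chosen vertex $v$, record the merges in which the block currently containing $v$ participates, calling such a merge a \emph{head} if $v$'s block wins (its representative acquires a child) and a \emph{tail} if it loses ($v$ acquires a new ancestor). Then $d_n(v)$ is the length of the initial run of heads, $h_n(v)$ is the total number of tails, and $\ell_n(v)$ equals (up to corrections that are negligible after taking logarithms) the number of blocks present at the first tail. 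I would first reduce the assertion, in the standard way for such extremal point processes (convergence of intensity and correlation measures, or of Laplace functionals, as in~\cite{Esl16,AddEsl18}), to: (i) for each fixed $j\in\Z$, the number $\Xi_n(j)$ of vertices with $d_n(v)-\lfloor\log_2 n\rfloor=j$ converges, jointly over $j$ in a finite window and along $(n_j)$, to independent Poisson variables with means $\int_{j-\eps}^{j+1-\eps}2^{-x}\log 2\,\d x=2^{\eps-j-1}$ --- exactly the statement recovered from~\cite{AddEsl18}, and the source of the subsequence requirement $\eps_{n_j}\to\eps$ (the floor is discontinuous); and (ii) the rescaled pair $\big((h_n(v)-\mu\log n)/\sqrt{\sigma^2\log n},\,(\log\ell_n(v)-\mu\log n)/\sqrt{(1-\sigma^2)\log n}\big)$, carried as a mark by each vertex counted in $\Xi_n(j)$, is asymptotically i.i.d.\ with the law of $(M\sqrt{1-\mu/\sigma^2}+N\sqrt{\mu/\sigma^2},M)$, independently across the $\Xi_n(j)$'s.

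Ingredient (ii) is the heart of the matter, and I would prove it by tracking a bounded number $k$ of uniformly chosen distinct vertices $v_1,\dots,v_k$ conditioned on $d_n(v_r)=\lfloor\log_2 n\rfloor+j_r$ for fixed shifts $j_r$. Consider first a single vertex $v$ with $d_n(v)=D_n$. Conditioning on $d_n(v)=D_n$ means the first $D_n$ participations of $v$ are heads and the $(D_n+1)$-th is a tail; the number $M^\ast$ of blocks at that tail is, up to lower-order terms, the block count at which the ``participation process'' of $v$ --- whose increments, as the block count $m$ descends from $n$, are approximately independent $\mathrm{Bernoulli}(2/m)$ --- first accumulates $D_n+1$ marks. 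Inverting this counting process (a Poisson/renewal central limit theorem for a triangular array of weakly dependent, non-identically distributed Bernoullis) gives $\log M^\ast=\mu\log n+\sqrt{(1-\sigma^2)\log n}\,M+o_{\prob}(\sqrt{\log n})$ with $M$ asymptotically standard normal (using $D_n\sim\log_2 n$ and the elementary identity $2\sigma^2=1+\mu$), and $\log\ell_n(v)$ coincides with $\log M^\ast$ to this order. Conditionally on $M^\ast$, the depth $h_n(v)$ is $1$ plus the number of subsequent losses of $v$'s block, i.e.\ a sum over block counts $m<M^\ast$ of approximately independent $\mathrm{Bernoulli}(1/m)$ terms, hence approximately $\mathrm{Poisson}(\log M^\ast)$; this contributes a Gaussian fluctuation of order $\sqrt{\mu\log n}$ that is asymptotically independent of $M$, so that $h_n(v)=\mu\log n+\sqrt{(1-\sigma^2)\log n}\,M+\sqrt{\mu\log n}\,N+o_{\prob}(\sqrt{\log n})$ with $N$ standard normal, independent of $M$. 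Dividing by the stated scalings and using $1-\mu/\sigma^2=(1-\sigma^2)/\sigma^2$ and $\mu/\sigma^2+(1-\sigma^2)/\sigma^2=1$ (both equivalent to $2\sigma^2=1+\mu$) yields precisely $(M\sqrt{1-\mu/\sigma^2}+N\sqrt{\mu/\sigma^2},M)$. For the $k$-vertex version one argues that the participation processes of $v_1,\dots,v_k$, and their post-first-tail histories, decouple asymptotically: the blocks are disjoint and interact only when the block count $m$ is small, a regime contributing $o_{\prob}(\sqrt{\log n})$ to every statistic. The same decoupling, together with (i), yields independence of the marks across distinct values of $j$.

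It then remains to assemble the pieces: the counts of near-maximal-degree vertices with each shift converge to the independent Poisson variables of (i), their marks converge to i.i.d.\ bivariate Gaussians by (ii), a standard truncation as $j\to-\infty$ (the vertices of non-extremal degree forming the part of the process that escapes to the compactifying point $\infty$) together with a tail bound as $j\to+\infty$ promotes finite-window convergence to convergence of the whole marked point process on $(\Z\cup\{\infty\})\times\R^2$, and identifying the limit with the ordered points $(\lfloor P_i+\eps\rfloor)_{i\in\N}$ of $\CP$ equipped with i.i.d.\ marks completes the proof.

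I expect the two genuine obstacles to be, first, the quantitative control turning ``$\log M^\ast$ is Gaussian'' and ``$h_n(v)\approx\mathrm{Poisson}(\log M^\ast)$'' into limit theorems with $o_{\prob}(\sqrt{\log n})$ errors --- the relevant sums run over non-identically distributed and mildly dependent Bernoullis with parameters $\asymp 1/m$ all the way down to small $m$, and the exact-degree conditioning must be propagated through the inversion --- and, second (and this is the crux, as the introduction emphasises), establishing the asymptotic independence of the marks of the $O(1)$ tracked vertices \emph{jointly} with the degree-conditioning: one must show that the correlations created by their sharing a single coalescent are confined to the late, small-$m$ part of the process and hence do not reach the $\sqrt{\log n}$ scale, uniformly over the conditioning events. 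By contrast, the bookkeeping around the floor function and the subsequence $\eps_{n_j}\to\eps$ is routine.
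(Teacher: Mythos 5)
Your proposal is correct and follows essentially the same approach the paper uses: the Kingman $n$-coalescent description of the degree as the initial streak of heads, the label as the block count at the first tail, and the depth as the total number of tails; factorial-moment/intensity convergence of the marked extremal point process; and decoupling of the $O(1)$ tracked vertices via the tightness of the first shared-merge time $\tau_k$. The paper implements the single-vertex CLT underlying your ``renewal inversion'' step through an explicit conditional-probability identity (Lemma~\ref{lemma:probonevert}) together with a Lindeberg CLT and a Skorokhod coupling of the binomial pieces $X_{n,\ell,1},X_{n,\ell,2}$, which is the same computation in a different guise, and it organizes the point-process assembly via the factorial moments of Proposition~\ref{prop:momentconv} rather than Laplace functionals.
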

	
	\begin{remark}\label{rem:rrt}
		Theorem~\ref{thrm:degdepthlabelrrt} extends both~\cite[Theorem $2.6$]{Lod21} in the case of the random recursive tree, as well as~\cite[Theorem $1.2$]{Esl16} (since, for each $s\in\N$, $M_s\sqrt{1-\mu/\sigma^2}+N_s\sqrt{\mu/\sigma^2}\sim \CN(0,1)$). Moreover, it provides the relation and dependence between the depth of a high-degree vertex and its label, which only becomes apparent in the second-order scaling and the limit. 		
	\end{remark}
	
	Beyond studying the behaviour of vertices with `near-maximum' degree, we are also interested in a more general setting. Here, we select $k\in\N$ many vertices uniformly at random from $T_n$ and condition on their degree. We can then provide the following detailed results on the joint behaviour of their depths, labels and the graph distances between them. The following result  is instrumental in proving Theorem~\ref{thrm:degdepthlabelrrt} as well.
	
	\begin{theorem}\label{thrm:condconvrrt}
		Consider the random recursive tree model as in Definition~\ref{def:rrt}. Fix $k\in\N$, $(a_i)_{i\in[k]}\in[0,2)^k$ and let $(v_i)_{i\in[k]}$ be $k$ distinct vertices chosen uniformly at random from $[n]$. Let $(d_i)_{i\in[k]}$ be $k$ integer-valued sequences such that
		\be\label{eq:ai}
		\lim_{n\to\infty}\frac{d_i}{\log n}=a_i,
		\ee 
		for each $i\in[k]$. The tuple 
		\be \label{eq:depthdist}
		\bigg(\!\Big(\frac{h_{T_n}(v_i)-(\log n-d_i/2)}{\sqrt{\log n-d_i/4}}\Big)_{i\in[k]}\!,
		\Big(\frac{\mathrm{dist}_{T_n}(v_i,v_j)-(2\log n-(d_i+d_j)/2)}{\sqrt{2\log n-(d_i+d_j)/4}}\Big)_{1\leq i<j\leq k}\!\bigg),
		\ee 
		conditionally on the event $\d_{T_n}(v_i)\geq d_i$ for all $i\in[k]$, converges in distribution to 
		\be 
		\bigg((H_i)_{i\in[k]},\bigg(\frac{\sqrt{4-a_i}H_i+\sqrt{4-a_j}H_j}{\sqrt{8-(a_i+a_j)}}\bigg)_{1\leq i<j\leq k}\bigg),
		\ee 
		where the $(H_i)_{i\in[k]}$ are independent standard normal random variables. Additionally assume that for all $i\in[k]$, $d_i$ diverges as $n\to\infty$. Then, the tuple
		\be\ba\label{eq:depthlabeldist}  
		\bigg({}&\Big(\frac{h_{T_n}(v_i)-(\log n-d_i/2)}{\sqrt{\log n-d_i/4}},\frac{\log v_i-(\log n-d_i/2)}{\sqrt{d_i/4}}\Big)_{i\in[k]},\\
		&\Big(\frac{\mathrm{dist}_{T_n}(v_i,v_j)-(2\log n-(d_i+d_j)/2)}{\sqrt{2\log n-(d_i+d_j)/4}}\Big)_{1\leq i<j\leq k}\bigg),
		\ea\ee 
		conditionally on the event $\d_{T_n}(v_i)\geq d_i$ for all $i\in[k]$, converges in distribution to 
		\be\ba   
		\bigg({}&\Big(M_i\sqrt{\frac{a_i}{4-a_i}}+N_i\sqrt{1-\frac{a_i}{4-a_i}},M_i\Big)_{i\in[k]},\\
		&\Big(\frac{M_i\sqrt{a_i}+N_i\sqrt{4-2a_i}+M_j\sqrt{a_j}+N_j\sqrt{4-2a_j}}{\sqrt{8-(a_i+a_j)}}\Big)_{1\leq i<j\leq k}\bigg),
		\ea\ee 
		where the $(M_i,N_i)_{i\in[k]}$ are independent standard normal random variables.
	\end{theorem}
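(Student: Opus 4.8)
The plan is to carry out the entire argument inside the Kingman $n$-coalescent representation of the RRT underlying the coin-flip description recalled in the introduction. Running the coalescent from $n$ singleton blocks to one block, at step $s$ there are $m_s:=n-s+1$ blocks, a uniformly random pair is merged, and a fair coin decides which of the two block-roots becomes the parent of the other. Since the $n$ vertices are exchangeable and $v_1,\dots,v_k$ are chosen uniformly, I may track the $k$ (initially singleton) blocks containing them. For a single tracked vertex $v$ set $\xi_s=\1{\text{the block of }v\text{ lies in the merging pair at step }s}$; as $m_s$ is deterministic and Kingman merges are not size-biased, the $\xi_s$ are independent, $\xi_s\sim\mathrm{Bern}(2/m_s)$. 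Let $\zeta_1,\zeta_2,\dots$ be i.i.d.\ fair coins, independent of $(\xi_s)_s$, recording whether the current root of $v$'s block loses each merge that block participates in. Then, with $K:=\sum_s\xi_s$ and $R:=\min\{r:\zeta_r=1\}$, one has $d_{T_n}(v)=R-1$ (truncated at $K$), $h_{T_n}(v)=\sum_{r=1}^{K}\zeta_r$ (each loss of a current root adds one generation below $v$), and the label of $v$ equals $m_s$ at the $R$-th $\xi$-success, i.e.\ the number of blocks present at the merge in which $v$ first loses. I record that $\E{K}=2\sum_{m=2}^n 1/m=2\log n+O(1)$, $\Var(K)\sim 2\log n$, and $K$ is independent of the $\zeta$'s.

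I next treat a single vertex conditioned on $\{d_{T_n}(v)\ge d\}=\{\zeta_1=\dots=\zeta_d=0\}\cap\{K\ge d\}$; since $d/\log n\to a<2$ the second event has probability tending to $1$. On it $h_{T_n}(v)=\sum_{r=d+1}^{K}\zeta_r$, so up to an $O(1)$ (negligible at scale $\sqrt{\log n}$) the conditional mean is $\tfrac12(\E{K}-d)=\log n-d/2$; and from the decomposition $h_{T_n}(v)-(\log n-d/2)=\big(\mathrm{Bin}(K-d,\tfrac12)-\tfrac{K-d}2\big)+\tfrac12(K-\E{K})$ the first (conditionally binomial) summand has variance $\sim\tfrac{\log n}2-\tfrac d4$ and the second has variance $\tfrac14\Var(K)\sim\tfrac{\log n}2$, for a total $\sim\log n-d/4$. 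For the label, the number of merges of $v$'s block while at least $m$ blocks remain is a sum of independent $\mathrm{Bern}(2/m')$, $m'\ge m$, with mean and variance both $\sim 2\log(n/m)$; inverting the level-$R$ hitting point (a local-CLT computation, where $a<2$ kills the error terms) gives $\log(\text{label of }v)=\log n-d/2+\tfrac12\sqrt d\,\widetilde Z+\op(\sqrt d)$ with $\widetilde Z\toindis\CN(0,1)$. The key identity is that $K$ equals $R$ plus an independent (given the label) sum of mean $\sim2\log n-d$ counting the merges of $v$'s block at fewer than $(\text{label of }v)$ blocks; hence $\tfrac12(K-\E{K})=\tfrac{\sqrt d}2\widetilde Z+(\text{an independent piece of variance }\sim\tfrac{\log n}2-\tfrac d4)+\Op(1)$. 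A Lindeberg CLT for these sums of bounded independent summands then yields jointly
\[
\frac{h_{T_n}(v)-(\log n-d/2)}{\sqrt{\log n-d/4}}\toindis M\sqrt{\tfrac{a}{4-a}}+N\sqrt{1-\tfrac{a}{4-a}},\qquad
\frac{\log(\text{label of }v)-(\log n-d/2)}{\sqrt{d/4}}\toindis M,
\]
with $M,N$ i.i.d.\ standard normal ($M=\lim\widetilde Z$), the coefficient $\sqrt{a/(4-a)}$ arising from $\tfrac{\sqrt d/2}{\sqrt{\log n-d/4}}\to\sqrt{a/(4-a)}$. If $d$ is only $O(\log n)$ without diverging, the same decomposition shows the $\widetilde Z$-term of the depth is $\Op(\sqrt d)$ — negligible after the $\sqrt{\log n}$ scaling — so in all cases the normalized depth converges to a standard normal $H$.

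For the joint statement with $k$ vertices and the distances, the crucial observation is that the blocks of $v_i$ and $v_j$ merge only when $M_{ij}=\Op(1)$ blocks remain: at each step the chance the merging pair is exactly those two blocks is $1/\binom{m_s}2$, and a telescoping estimate shows that $\P{M_{ij}\le m}$ has, for each fixed $m$, a limit bounded away from $0$. From this I extract three facts. (i) The degree of $v_i$ is decided by the $\Op(\log n)$ merges of its block while many blocks remain, throughout which that block is asymptotically almost surely disjoint from the block of every other $v_j$; hence $\{d_{T_n}(v_i)\ge d_i:i\in[k]\}$ factorizes asymptotically and the previous step applies to each $i$. (ii) Before the last tracked-block merger, no two distinct tracked blocks lie in a common merging pair, so the families $(\xi^{(i)}_s)_s$ and $(\xi^{(j)}_s)_s$ — and likewise the relevant $\zeta$-coins — differ from independent only through contributions of total variance $O\big(\sum_s m_s^{-2}\big)=O(1)$; hence the fluctuations of distinct vertices are asymptotically independent apart from the within-$i$ coupling $(M_i,N_i)$. (iii) The depth $h_{T_n}(v_i\wedge v_j)$ of the lowest common ancestor equals the number of losses of the merged block over its final $\Op(1)$ merges, so $h_{T_n}(v_i\wedge v_j)=\Op(1)$, whence $\dist_{T_n}(v_i,v_j)=h_{T_n}(v_i)+h_{T_n}(v_j)-2h_{T_n}(v_i\wedge v_j)=h_{T_n}(v_i)+h_{T_n}(v_j)+\Op(1)$. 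Centering the distance by $2\log n-(d_i+d_j)/2$ (the sum of the depth centerings), scaling by $\sqrt{2\log n-(d_i+d_j)/4}=\sqrt{(\log n-d_i/4)+(\log n-d_j/4)}$, and using $\sqrt{\log n-d_i/4}\,/\,\sqrt{2\log n-(d_i+d_j)/4}\to\sqrt{(4-a_i)/(8-(a_i+a_j))}$, the claimed joint limits follow from the Cramér–Wold device together with the Lindeberg CLT, and substituting the depth limits and simplifying yields both distance-limit formulas (the first with the $H_i$'s, the refined one with the $M_i,N_i$'s).

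I expect the main difficulty to be making the three ``almost independences'' above quantitative under the rare conditioning $\{d_{T_n}(v_i)\ge d_i:i\in[k]\}$, whose probability is only of order $\prod_i 2^{-d_i}$ and hence polynomially small: one must show that this conditioning pins down only the first $\approx d_i$ flips of each token and leaves the remaining randomness (which drives every CLT) essentially unperturbed, and that the coupling errors coming from (a) tracked blocks occasionally meeting at a large number of blocks, (b) the $\Op(1)$ excess degree $R_i-d_i$ together with its $\Op(1)$ multiplicative effect on the label, and (c) the $\Op(1)$ coalescent tail after the last tracked-block merger are all $\op(\sqrt{\log n})$ uniformly, so they disappear after normalization. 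Once this is established, the remaining work — reading off the within-vertex depth–label covariance from the shared $\tfrac{\sqrt{d_i}}2\widetilde Z_i$ term and propagating the depth fluctuations into the distances — is a routine if lengthy variance computation, with joint asymptotic normality delivered by Cramér–Wold and Lindeberg–Feller applied to the triangular arrays of independent bounded summands isolated above.
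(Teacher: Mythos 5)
Your proposal takes essentially the same route as the paper's proof of Theorem~\ref{thrm:condconvking} (equivalently Theorem~\ref{thrm:condconvrrt} via Corollary~\ref{cor:unifnodes}): pass to the Kingman $n$-coalescent, read degree, depth and label off the selection indicators and coin flips exactly as in~\eqref{eq:quantdescr}, decouple the $k$ tracked vertices by showing their blocks coalesce only when $\Op(1)$ blocks remain (the tightness of $\tau_k$ from Lemma~\ref{lemma:tauk}), and squeeze $\mathrm{dist}_n(i,j)$ between $h_n(i)+h_n(j)$ and a lower bound that differs from it by the $\Op(1)$ depth of the lowest common ancestor. The paper makes your three ``almost independence'' claims quantitative through the truncated selection sets $\overline{\mathcal{S}}_{n,1}$ and the events $\mathcal{A}_{\bar d},\mathcal{B}_{n,\delta}$ (Lemmas~\ref{lemma:probonevert} through~\ref{lemma:h2n}) rather than the variance-coupling heuristic you gesture at, but the within-vertex depth--label covariance computation with weight $\sqrt{a_i/(4-a_i)}$ (Propositions~\ref{prop:limit} and~\ref{prop:condprobmult}) and the distance argument at the end of Section~\ref{sec:kingproof} are precisely the ones you outline.
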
 
	
	\begin{remark}\label{rem:degthrm}
		$(i)$ With an almost identical proof, the same results can be obtained when using the conditional event $\{\d_{T_n}(v_i)= d_i, i\in[k]\}$ rather than $\{\d_{T_n}(v_i)\geq d_i, i\in[k]\}$.
		
		$(ii)$ When $a_i=0$ for all $i\in[k]$, we obtain the behaviour of the \emph{insertion depth} of $k$ uniform vertices, as well as the graph distance between them.
		
		$(iii)$ The conditional convergence of the tuple in~\eqref{eq:depthdist} recovers, improves, and extends the result of Eslava in~\cite[Theorem $1.1$]{Esl16}. When we omit the distance between the vertices $v_i,v_j$ and set $d_i:=\lfloor a_i\log n\rfloor +b_i$ for some $a_i\in[0,2),b_i\in\Z$ for all $i\in[k]$, we obtain~\cite[Theorem $1.1$]{Esl16}. Our result allows for a greater freedom in the choice of the degrees $d_i$ rather than the parametrised setting used by Eslava. We extend Eslava's result even further by including the graph distance between any pair of vertices and, in~\eqref{eq:depthlabeldist}, by also including the label of the vertices $v_1,\ldots, v_k$. The latter also allows for a more precise description of the limiting distribution of the depth compared to~\cite[Theorem $1.1$]{Esl16}. We observe that the scaling of the graph distance suggests that the graph distance between vertices $v_i$ and $v_j$, for any distinct $i,j\in[k]$, is the sum of their depths. Though this sum is a trivial upper bound, we show that it is of the correct order by using the fact that the largest common ancestor of $v_i$ and $v_j$, $\mathrm{LCA}_{i,j}$, forms a tight sequence of random variables (in $n\in\N$). 
	\end{remark}
	
	Next to conditioning on the degree of vertices selected uniformly at random, we also have the following result on the degree and depth of and graph distance between vertices with a fixed label. Though the marginal convergence of the degree and depth of a vertices and graph distance of a pair of vertices with a fixed label has been studied previously (see~\cite{KubPan07,Dev98,Mah91,Dob96,FengSuLiu06}), we combine, extend, and improve these results by considering the joint convergence and by allowing for any number of (pairs of) vertices.
	
	\begin{theorem}\label{thrm:fixedlabel}
		Consider the random recursive tree model as in Definition~\ref{def:rrt}. Fix $k\in\N$ and let $(v_{i,n})_{i\in[k]}\in[n]^k$ be $k$ distinct integer-valued sequences such that $v_{i,n}$ increases with $n$, diverges as $n\to\infty$ and such that 
		\be 
		c_{i,j}:=\lim_{n\to\infty}\sqrt{\frac{\log v_{i,n}}{\log v_{i,n}+\log v_{j,n}}}
		\ee 
		exists for all $1\leq i<j\leq k$. Let $(N_i)_{i\in[k]}$ be $k$ independent standard normal random variables. We also define for each $i\in[k]$,
		\be 
		\d_{T_n}^*(v_{i,n}):=\begin{cases}
			\frac{\d_{T_n}(v_{i,n})-\log(n/v_{i,n})}{\sqrt{\log(n/v_{i,n})}} &\mbox{if } v_{i,n}=o(n),\\
			\d_{T_n}(v_{i,n}), &\mbox{otherwise,}
		\end{cases},
		\ee 
		and let $(Z_i)_{i\in[k]}$ be $k$ independent random variables $($\!also independent of $(N_i)_{i\in[k]})$ such that, for $(\rho_i)_{i\in[k]}\in(0,1)^k$,
		\be
		Z_i\sim \begin{cases}
			\mathcal N(0,1) &\mbox{if } v_{i,n}=o(n),\\
			\mathrm{Poi}(\log(1/\rho_i)) & \mbox{if }v_{i,n}=(1+o(1))\rho_i n,\\
			0 & \mbox{if } v_{i,n}=n-o(n).
		\end{cases}
		\ee 
		Then,
		\be \ba\label{eq:degdepthdist}
		\bigg({}&\Big(\d_{T_n}^*(v_{i,n}),\frac{h_{T_n}(v_{i,n})-\log v_{i,n}}{\sqrt{\log v_{i,n}}}\Big)_{i\in[k]}, \Big(\frac{\mathrm{dist}_{T_n}(i,j)-(\log v_{i,n}+\log v_{j,n})}{\sqrt{\log v_{i,n}+\log v_{j,n}}}\Big)_{1\leq i<j\leq k}\bigg)\\
		&\toindis \big((Z_i, N_i)_{i\in[k]},(c_{i,j} N_i+c_{j,i} N_j)_{1\leq i<j\leq k}\big).
		\ea \ee 	
	\end{theorem}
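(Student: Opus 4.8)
The plan is to run the whole argument inside the Kingman $n$-coalescent representation of $T_n$ on which the paper is based, reducing the degree, depth and distance statistics of the $k$ marked vertices $v_{1,n},\dots,v_{k,n}$ to functionals of their coin-flip sequences, and then assembling the joint limit from one-dimensional pieces. The first reduction concerns the graph distances: for $1\le i<j\le k$ one has the deterministic identity
\be
\dist_{T_n}(v_{i,n},v_{j,n})=h_{T_n}(v_{i,n})+h_{T_n}(v_{j,n})-2\,h_{T_n}(\mathrm{LCA}_{i,j}),
\ee
where $\mathrm{LCA}_{i,j}$ is the most recent common ancestor of $v_{i,n}$ and $v_{j,n}$. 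The crucial input is a tightness lemma: the label of $\mathrm{LCA}_{i,j}$, hence $h_{T_n}(\mathrm{LCA}_{i,j})$, is $\Op(1)$ in $n$. In the coalescent picture this holds because the blocks carrying $v_{i,n}$ and $v_{j,n}$ merge into one another only $\Op(1)$ steps before the population has coalesced down to $\Op(1)$ blocks; equivalently, along ancestral lines, once the upward path from the later of the two vertices drops below the earlier one, the two paths coalesce within $\Op(1)$ further steps. Granting this, $h_{T_n}(\mathrm{LCA}_{i,j})/\sqrt{\log v_{i,n}+\log v_{j,n}}\toinp 0$, and since $\sqrt{\log v_{i,n}}/\sqrt{\log v_{i,n}+\log v_{j,n}}\to c_{i,j}$, the asserted limit for the rescaled distances follows from the joint limit for the rescaled depths by Slutsky's lemma and the continuous mapping theorem. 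It therefore suffices to prove
\be
\Big(\d_{T_n}^*(v_{i,n}),\ \tfrac{h_{T_n}(v_{i,n})-\log v_{i,n}}{\sqrt{\log v_{i,n}}}\Big)_{i\in[k]}\toindis(Z_i,N_i)_{i\in[k]},
\ee
with the $2k$ coordinates of the limit independent.

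Next I would pin down the one-dimensional laws. In the coalescent representation the degree, label and depth of a marked vertex are encoded by a sequence of independent coin flips --- the degree as the length of the first streak of heads, the label as the step of the first tails, and the depth as the total number of tails --- so that, conditionally on the label $v_{i,n}$, the first head-streak and the number of subsequent tails depend on disjoint segments of the sequence and are conditionally independent; from the recursive construction, where $v_{i,n}$ is a deterministic label, this is in fact an exact independence, the degree of $v_{i,n}$ being measurable with respect to the insertions at times larger than $v_{i,n}$ and the depth with respect to those at times at most $v_{i,n}$. One then identifies $d_{T_n}(v_{i,n})\equalsd\sum_{m=v_{i,n}+1}^{n}\mathrm{Ber}(1/(m-1))$, a sum of independent indicators with total mean $\log(n/v_{i,n})+o(1)$: a Lyapunov central limit theorem gives the Gaussian limit of $\d_{T_n}^*(v_{i,n})$ when $v_{i,n}=o(n)$, the Poisson limit theorem gives $\Poi(\log(1/\rho_i))$ when $v_{i,n}=(1+o(1))\rho_i n$, and the sum tends to $0$ in probability when $v_{i,n}=n-o(n)$. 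Likewise $h_{T_n}(v_{i,n})=h_{T_{v_{i,n}}}(v_{i,n})\equalsd\sum_{j=1}^{v_{i,n}-1}\mathrm{Ber}(1/j)$, with mean $\log v_{i,n}+o(\sqrt{\log v_{i,n}})$ and variance $\log v_{i,n}+o(\log v_{i,n})$, so a second Lyapunov central limit theorem yields the Gaussian limit $N_i$.

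It remains to upgrade these marginals to the joint statement with independent limits, and this is the step I expect to be the real obstacle. For distinct vertices the mechanism is the decoupling exploited throughout the paper: in the coalescent the coin-flip sequences of $v_{i,n}$ and $v_{j,n}$ evolve independently until their blocks coalesce, and the contribution of the shared portion is governed by the same estimate that gives the tightness of $\mathrm{LCA}_{i,j}$, hence is $\op(\sqrt{\log v_{i,n}})$ for the depths and $\op(1)$ for the head-streaks; for the degrees one computes directly that $\cov(d_{T_n}(v_{i,n}),d_{T_n}(v_{j,n}))=-\sum_{m>\max(v_{i,n},v_{j,n})}(m-1)^{-2}=O(1/n)\to 0$. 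I would make this rigorous by expanding the joint characteristic (or Laplace) function of the rescaled degree-depth-distance vector in the coalescent representation, factoring it along these exact or asymptotic independences, bounding the errors introduced by conditioning on the precise labels and by the LCA-overlaps, and invoking Lévy's continuity theorem; the three regimes a vertex may occupy (diverging but $o(n)$, proportional to $n$, or $n-o(n)$) are then handled at once, the limit being the product of the relevant Gaussian, Poisson and degenerate factors, and substitution into the distance identity recovers the full statement. The genuine difficulty is exactly this simultaneous and uniform control of all cross-correlations and conditioning errors across the heterogeneous regimes of $v_{1,n},\dots,v_{k,n}$; that, together with the LCA-tightness lemma of the first step, is where the real work lies, the marginal identifications being routine.
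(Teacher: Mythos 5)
Your proposal is correct and follows essentially the same architecture as the paper's proof: reduce the graph distance to the sum of depths via an $\Op(1)$ correction term, identify the Gaussian/Poisson/degenerate marginal limits by writing the degree and depth as sums of independent Bernoulli indicators, and decouple the $k$ vertices via the coalescent. Your LCA identity $\dist_{T_n}(v_i,v_j)=h_{T_n}(v_i)+h_{T_n}(v_j)-2h_{T_n}(\mathrm{LCA}_{i,j})$ with $h_{T_n}(\mathrm{LCA}_{i,j})=\Op(1)$ is a repackaging of the paper's sandwich $h_{n,1}(i)+h_{n,1}(j)\le\dist_n(i,j)\le h_n(i)+h_n(j)$ on $\{\tau_k<t_n\}$, since the label of $\mathrm{LCA}_{i,j}$ is bounded by $\tau_k$, whose tightness is the content of Lemma~\ref{lemma:tauk}.

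The one place you genuinely improve on the paper is the observation that in the recursive construction, where $v_{i,n}$ is a deterministic label, $h_{T_n}(v_{i,n})$ is measurable with respect to $(U_2,\dots,U_{v_{i,n}})$ while $d_{T_n}(v_{i,n})=\sum_{m>v_{i,n}}\ind_{\{U_m=v_{i,n}\}}$ is measurable with respect to $(U_{v_{i,n}+1},\dots,U_n)$, so the two are \emph{exactly} independent. This replaces the paper's considerably longer conditional computation in the coalescent (Lemma~\ref{lemma:probonevert}, \eqref{eq:eql}, and the factorisation in the proof of Proposition~\ref{prop:limitfixell}), which only delivers the same independence after conditioning on $\ell_n(1)=\ell$. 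On the other hand, the multi-vertex decoupling — which you correctly flag as the real obstacle — is only sketched via characteristic functions and a covariance estimate that covers degree--degree but not degree--depth or depth--depth cross-terms; the exact-independence trick does not extend to two distinct labels since $h_{T_n}(v_{j,n})$ and $d_{T_n}(v_{i,n})$ can share the uniforms $U_m$ for $v_{i,n}<m\le v_{j,n}$. Filling that in would require essentially the paper's disjoint-truncated-selection-set argument (Lemmas~\ref{lemma:probmultvert}, \ref{lemma:labeldegprob} and Proposition~\ref{prop:condprobfixell}), so the overall amount of work is comparable; the net gain from your approach is a cleaner single-vertex step, not a shorter multi-vertex one.
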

	
	\begin{remark}
		$(i)$ The theorem partially recovers a result from Feng, Lui, and Su~\cite[Theorem $1$]{FengSuLiu06}, where the distance between vertices $i_n$ and $n$ for any integer sequence $(i_n)_{n\in\N}$ such that $i_n\in[n-1]$ holds is covered. In our setting, we require the labels $v_{i,n}$ to be increasing in $n$ and to diverge with $n$, as we are unable to characterise the limiting distributions of the depth and degree otherwise. We also recover the less general results (compared to Feng \emph{et al.}) of Dobrow~\cite[Theorems $3$ and $4$]{Dob96} on the graph distance between vertices $i_n$ and $n$ with $i_n=n-1$ or $i_n=\lfloor \lambda n\rfloor, \lambda\in(0,1)$. Moreover, we are able to provide a more detailed description of the scaling limit of the distance between the vertices $v_{1,n},\ldots, v_{k,n}$ in relation to their depth, which is not present in~\cite{FengSuLiu06} or~\cite{Dob96}. 
		
		$(ii)$  The theorem recovers the results of Devroye~\cite{Dev98} and Mahmoud~\cite{Mah91} on the insertion depth.
		
		$(iii)$  The theorem recovers a result of Kuba and Panholzer~\cite[Theorem $2$]{KubPan07} regarding the degree of a vertex with a prescribed label. 
		
		$(iv)$ In all cases described in points $(i)$, $(ii)$ and $(iii)$, we extend the results of Feng et al., Devroye, Mahmoud, and Kuba and Panholzer to $k$ vertices and $\binom{k}{2}$ pairs of vertices for any $k\geq 2$.	
		
		$(v)$ The constraint that all $v_{i,n}$ are increasing in $n$ arises due a technicality, which we illustrate with the following example. Suppose $k=2$ and 
		\be 
		v_{1,n}=\lfloor n/2\rfloor\ind_{\{n\text{ is even}\}}+\lfloor n/3\rfloor \ind_{\{n\text{ is odd}\}},\qquad v_{2,n}=\lfloor n/3\rfloor\ind_{\{n\text{ is even}\}}+\lfloor n/2\rfloor \ind_{\{n\text{ is odd}\}}.
		\ee 
		In this case, $c_{1,2}=c_{2,1}=1/\sqrt 2$ both exist, so that the limiting law of the graph distance can be obtained, but the limiting laws of $d^*_{T_n}(v_{1,n})$ and $d^*_{T_n}(v_{2,n})$ do not exist. Indeed, it holds that $d^*_{T_{2n}}(v_{1,2n})\toindis \text{Poi}(\log 2)$ and $d^*_{T_{2n+1}}(v_{1,2n+1})\toindis \text{Poi}(\log 3)$. Such cases are circumvented when the $v_{i,n}$ are increasing with $n$. When omitting the degree, \emph{any} diverging sequences $(v_{i,n})_{i\in[k]}$ such that the $(c_{i,j})_{1\leq i<j\leq k} $ exist can be considered. 
	\end{remark}
	
	The main approach to proving Theorems~\ref{thrm:degdepthlabelrrt}, \ref{thrm:condconvrrt}, and~\ref{thrm:fixedlabel} is to use a `reversed-time' construction or coalescent construction of the random recursive tree, known as the Kingman $n$-coalescent construction (see Section~\ref{sec:king}). This construction has several advantages compared to the construction in Definition~\ref{def:rrt}. First, the depth, degree, and label of vertices in the Kingman $n$-coalescent are exchangeable, which simplifies the analysis of their joint behaviour. Second, the coalescent construction simplifies dealing with correlations that appear when considering the depth, degree, and label of multiple vertices at once. In particular, it provides an elegant way to decouple the degree, depth, and label of distinct vertices. Finally, the size of the depth, degree, and label of a vertex can be understood in terms of sums of independent indicator random variables and independent fair coin flips. As a result, standard central limit theorem results can be applied to obtain the desired results.\\
	
	\noindent \textbf{Outline of the paper}
	
	The paper is organised as follows: We first provide some theoretical preparations, necessary to prove the Theorems stated in Section~\ref{sec:def}. We provide a perspective for Theorem~\ref{thrm:degdepthlabelrrt} in terms of marked point processes, and provide a construction of the random recursive tree, called the Kingman $n$-coalescent construction, that aids in the analysis of the properties of interest here. In particular, we rephrase Theorems~\ref{thrm:condconvrrt} and~\ref{thrm:fixedlabel} in terms of the Kingman $n$-coalescent in Theorems~\ref{thrm:condconvking} and~\ref{thrm:kingfixlabel}, respectively. Section~\ref{sec:pre} is then dedicated to developing some preliminary results based on the Kingman $n$-coalescent construction. These preliminary results are used in Sections~\ref{sec:deg} and~\ref{sec:lab} to obtain intermediate results on the behaviour of high-degree vertices and vertices with a given label, respectively. Finally, these intermediate results are used in Section~\ref{sec:proofppp} to prove Theorem~\ref{thrm:degdepthlabelrrt} and in Section~\ref{sec:kingproof} to prove Theorems~\ref{thrm:condconvrrt} and~\ref{thrm:fixedlabel}.	
	
	\section{The degree, depth, and label of high-degree vertices in the random recursive tree: theoretical preparations}\label{sec:king}
	
	In this section we provide a new perspective of Theorem~\ref{thrm:degdepthlabelrrt}, alongside a different construction of the random recursive tree compared to Definition~\ref{def:rrt}. The latter will be of aid in proving all results presented in Section~\ref{sec:def}.
	
	To prove Theorem~\ref{thrm:degdepthlabelrrt}, we use the convergence of marked point processes. Recall that $d_n^s,h_n^s$ and $\ell_n^s$ denote the degree, depth, and label of the vertex with the $s^{\text{th}}$ largest degree in the random recursive tree, respectively, with $s\in[n]$, where ties are split uniformly at random. Let $\mu:=1-1/(2\log 2)$ and $\sigma^2:=1-1/(4\log 2)$. We view the tuples 
	\be 
	\Big(d_n^s-\lfloor \log_2 n\rfloor, \frac{h_n^s-\mu\log n}{\sqrt{(1-\sigma^2)\log n}}, \frac{\log \ell_n^s -\mu\log n}{\sqrt{\sigma^2\log n}}, s\in[n]\Big),
	\ee 
	as a marked point process, where the rescaled degrees form the points and the rescaled depth and label form the marks of the points. Let $\Z^*:=\Z\cup \{\infty\}$ and endow $\Z^*$ with the metric $d(s,t):=|2^{-s}-2^{-t}|, d(s,\infty)=2^{-s}, s,t\in \Z$. We work with $\Z^*$ rather than $\Z$, as sets $[s,\infty]$ for $s\in\Z$ are now compact. 
	Let $\CP$ be a Poisson point process on $\R$ with intensity $\lambda( \d x):=2^{-x}\log 2\,\d x$ and let $(\xi^{(1)}_x,\xi^{(2)}_x)_{x\in\CP}$ be independent standard normal random variables. For $\eps\in[0,1]$, we define the ground process $\CP^\eps$ on $\Z^*$ and the marked process $\CM\CP^\eps$ on $\Z^*\times \R^2$ by 
	\be \label{eq:limppp2}
	\CP^\eps:=\sum_{x\in\CP}\delta_{\lfloor x+\eps\rfloor}, \qquad \CM\CP^\eps:=\sum_{x\in \CP}\delta_{(\lfloor x+\eps\rfloor, \sqrt{\mu/\sigma^2}\xi^{(1)}_x+\sqrt{1-\mu/\sigma^2}\xi^{(2)}_x, \xi^{(2)}_x)}, 
	\ee  
	where $\delta$ is a Dirac measure. Similarly, we define
	\be \ba\label{eq:pppn}
	\CP^{(n)}&:=\sum_{v=1}^n \delta_{\d_{T_n}(v)-\lfloor\log_2 n\rfloor}, \\ \CM\CP^{(n)}&:=\sum_{v=1}^n \delta_{(\d_{T_n}(v)-\lfloor\log_2 n\rfloor,(h_{T_n}(v)-\mu\log n)/\sqrt{\sigma^2\log n}, (\log v-\mu\log n)/\sqrt{(1-\sigma^2)\log n})}.
	\ea \ee 
	We then let $\CM_{\Z^*}^\#$ and $\CM_{Z^*\times \R^2}^\#$ be the spaces of boundedly finite measures on $\Z^*$ and $\Z^*\times \R^2$, respectively, and observe that $\CP^{(n)},\CP^\eps$ and $\CM\CP^{(n)},\CM\CP^\eps$ are elements of $\CM_{\Z^*}^\#$ and $\CM_{\Z^*\times \R^2}^\#$, respectively. Theorem~\ref{thrm:degdepthlabelrrt} is then equivalent to the weak convergence of $\CM\CP^{(n_t)}$ to $\CM\CP^\eps$ in $\CM_{\Z^*\times \R^2}^\#$ along suitable subsequences $(n_t)_{t\in\N}$, as we can order the points in the definition of $\CM\CP^{(n)}$ (resp.\ $\CM\CP^\eps$) in decreasing order of their degrees (resp.\ of the points $x\in \CP$). We remark that the weak convergence of $\CP^{(n_t)}$ to $\CP^{\eps}$ in $\CM_{\Z^*}^\#$ along subsequences has been established by Addario-Berry and Eslava in~\cite{AddEsl18} (later generalised to weighted recursive trees by Eslava, the author, and Ortgiese in~\cite{EslLodOrt21} and extended to marked point processes by the author in~\cite{Lod21}) and that Eslava established the weak convergence of $\wt{\CM\CP}^{(n_t)}$ along subsequences, which is $\CM\CP^{(n_t)}$ with each mark restricted to the first element (i.e.\ not considering the label), in~\cite{Esl16}. We extend these results here to the tuple of degree, depth, and label, which also shows an interesting dependence in the limit of the rescaled depth and rescaled labels.
	
	Recall the Poisson point process $\CP$ used in the definition of $\CP^\eps$ in~\eqref{eq:limppp2} and enumerate its points in decreasing order. That is, $P_v$ denotes the $v^{\text{th}}$ largest point of $\CP$ (ties broken uniformly at random). We observe that this is well-defined, since $\CP([x,\infty))<\infty$ almost surely for any $x\in \R$. Also, let $(M_v,N_v)_{v\in\N}$ be two sequences of i.i.d.\ standard normal random variables. To prove the weak convergence of the marked point process $\CM\CP^{(n)}$, we define, for $s\in\Z, B\in \CB(\R^2)$, the counting measures
	\be \ba \label{eq:xn}
	X^{(n)}_s(B)&:=\Big|\Big\{v\in[n]: \d_{T_n}(v)=\lfloor \log_2n\rfloor +s, \Big(\frac{h_{T_n}(v)-(\log n-(\lfloor \log_2n\rfloor +s)/2)}{\sqrt{\log n-(\lfloor \log_2n\rfloor +s)/4}},\\
	&\hphantom{:=\Big|\Big\{\inn: \d_{T_n}(v)=\lfloor \log_2n\rfloor +s, \Big(,} \frac{\log v-(\log n-(\lfloor \log_2n\rfloor +s)/2)}{\sqrt{(\lfloor \log_2n\rfloor+s)/4}}\Big)\in B\Big\}\Big|,\\
	X^{(n)}_{\geq s}(B)&:=\Big|\Big\{v\in[n]: \d_{T_n}(v)\geq\lfloor \log_2n\rfloor +s, \Big(\frac{h_{T_n}(v)-(\log n-(\lfloor \log_2n\rfloor +s)/2)}{\sqrt{\log n-(\lfloor \log_2n\rfloor +s)/4}},\\
	&\hphantom{:=\Big|\Big\{\inn: \d_{T_n}(v)=\lfloor \log_2n\rfloor +s, \Big(,} \frac{\log v-(\log n-(\lfloor \log_2n\rfloor +s)/2)}{\sqrt{(\lfloor \log_2n\rfloor+s)/4}}\Big)\in B\Big\}\Big|,\\
	\wt X^{(n)}_s(B)&:=\Big|\Big\{v\in[n]: \d_{T_n}(v)=\lfloor \log_2n\rfloor +s, \Big(\frac{h_{T_n}(v)-\mu\log n}{\sqrt{\sigma^2\log n}}, \frac{\log v-\mu\log n}{\sqrt{(1-\sigma^2)\log n}}\Big)\in B\Big\}\Big|,\\
	\wt X^{(n)}_{\geq s}(B)&:=\Big|\Big\{v\in[n]: \d_{T_n}(v)\geq\lfloor \log_2n\rfloor +s, \Big(\frac{h_{T_n}(v)-\mu\log n}{\sqrt{\sigma^2\log n}}, \frac{\log v-\mu\log n}{\sqrt{(1-\sigma^2)\log n}}\Big)\in B\Big\}\Big|,\\
	X_s(B)&:=\Big|\Big\{v\in\N: \lfloor P_v+\eps\rfloor = s, \Big(M_v\sqrt{1-\frac{\mu}{\sigma^2}}+N_v\sqrt{\frac{\mu}{\sigma^2}}, M_v\Big)\in B\Big\}\Big|,\\
	X_{\geq s}(B)&:=\Big|\Big\{v\in\N: \lfloor P_v+\eps\rfloor \geq s, \Big(M_v\sqrt{1-\frac{\mu}{\sigma^2}}+N_v\sqrt{\frac{\mu}{\sigma^2}}, M_v\Big)\in B\Big\}\Big|.
	\ea \ee 
	We note that, when $s=o(\sqrt{\log n})$, $X_s^{(n)}(B)\approx \wt X^{(n)}_s(B)$ and $X_{\geq s}^{(n)}(B)\approx \wt X^{(n)}_{\geq s}(B)$ for any fixed $B\subseteq \R$. For the result in Theorem~\ref{thrm:degdepthlabelrrt} we are interested in the distributional convergence of $\wt X_s^{(n)}(B),\wt X_{\geq s}^{(n)}(B)$ to $X_s(B),X_{\geq s}(B)$, which we obtain in a more general setting for the random variables $ X_s^{(n)}(B), X_{\geq s}^{(n)}(B)$. The following intermediate result related to these counting measures aids us in obtaining this distributional convergence. 
	
	\begin{proposition}[Factorial moments of counting measures]\label{prop:momentconv}
		Fix $K\in\N$ and $(a_m)_{m\in[K]}\in[0,2)^K$. Let $(s_m)_{m\in[K]}$ be a non-decreasing integer-valued sequence with $0\leq K':=\min\{m: s_{m+1}=s_K\}$ such that $s_1+\log_2 n=\omega(1)$ and 
		\be 
		\lim_{n\to\infty} \frac{s_m+\log_2 n}{\log n}=a_m,
		\ee 
		for all $m\in[K]$. Let $(B_m)_{m\in[K]}$ be a sequence of sets $B_m\subset \CB(\R^2)$ such that $B_m\cap B_\ell=\emptyset$ when $s_m=s_\ell$ and $m\neq \ell$, let $(c_m)_{m\in[K]}\in \N_0^K$ and let $M$ and $N$ be two independent standard normal random variables. Recall the random variables $X_s^{(n)}(B),X_{\geq s}^{(n)}(B)$ and $\wt X_s^{(n)}(B),\wt X_{\geq s}^{(n)}(B)$ from~\eqref{eq:xn}, and define $\eps_n:=\log_2n-\lfloor \log_2n\rfloor$. Then, 
		\be \ba 
		\mathbb E\bigg[\prod_{m=1}^{K'}{}&\Big(X_{s_m}^{(n)}(B_m)\Big)_{c_m}\prod_{m=K'+1}^K\Big(X_{\geq s_m}^{(n)}(B_m)\Big)_{c_m} \bigg]\\
		={}&(1+o(1))\prod_{m=1}^{K'}\bigg(2^{-(s_m+1)+\eps_n}\P{\Big(M\sqrt{\frac{a_m}{4-a_m}}+N\sqrt{1-\frac{a_m}{4-a_m}}, M\Big)\in B_m}\bigg)^{c_m}\\
		&\times\prod_{m=K'+1}^K\bigg(2^{-s_K+\eps_n}\P{\Big(M\sqrt{\frac{a_m}{4-a_m}}+N\sqrt{1-\frac{a_m}{4-a_m}}, M\Big)\in B_m}\bigg)^{c_m} .
		\ea \ee 
		Moreover, when $s_1, \ldots, s_K=o(\sqrt{\log n})$ and $a_m=1/\log 2$ for all $m\in[K]$,
		\be \ba 
		\mathbb E\bigg[\prod_{m=1}^{K'}{}&\Big(\wt X_{s_m}^{(n)}(B_m)\Big)_{c_m}\prod_{m=K'+1}^K\Big(\wt X_{\geq s_m}^{(n)}(B_m)\Big)_{c_m} \bigg]\\
		={}&(1+o(1))\prod_{m=1}^{K'}\bigg(2^{-(s_m+1)+\eps_n}\P{\Big(M\sqrt{1-\frac{\mu}{\sigma^2}}+N\sqrt{\frac{\mu}{\sigma^2}}, M\Big)\in B_m}\bigg)^{c_m}\\
		&\times\prod_{m=K'+1}^K\bigg(2^{-s_K+\eps_n}\P{\Big(M\sqrt{1-\frac{\mu}{\sigma^2}}+N\sqrt{\frac{\mu}{\sigma^2}}, M\Big)\in B_m}\bigg)^{c_m} .
		\ea \ee 
	\end{proposition}
	
	As the counting measures defined in~\eqref{eq:xn} are sums of indicator random variables, their factorial moments can be expressed in terms of probabilities 
	\be \ba 
	\mathbb P({}&\d_{T_n}(v_i) \geq d_i, (h_{T_n}(v_i), \log v_i)\in B_i, i\in[k])\\
	&=\P{\d_{T_n}(v_i)\geq d_i, i\in[k]}\P{(h_{T_n}(v_i), \log v_i)\in B_i, i\in[k]\,|\, \zni\geq d_i, i\in[k]}.
	\ea \ee 
	Here, we let $(d_i)_{i\in[k]}\in \N_0^k$ such that $d_i<2\log n$, $(B_i)_{i\in[k]}\in \CB(\R^2)^k$, $(v_i)_{i\in[k]}$ distinct vertices selected uniformly at random, and $k\in\N$. The first probability on the right-hand side is studied by Addario-Berry and Eslava in~\cite{AddEsl18}, and the latter is the subject of Theorem~\ref{thrm:condconvrrt}.	This can in turn be used to prove Proposition~\ref{prop:momentconv}, which finally leads to Theorem~\ref{thrm:degdepthlabelrrt}. We provide more details alongside the proof of Proposition~\ref{prop:momentconv} and Theorem~\ref{thrm:degdepthlabelrrt} in Section~\ref{sec:proofppp}.
	
	\subsection{\texorpdfstring{The Kingman $n$-coalescent}{}}
	
	We now provide an alternative construction of the random recursive tree (RRT), which we use to prove Theorems~\ref{thrm:degdepthlabelrrt}, ~\ref{thrm:condconvrrt} and~\ref{thrm:fixedlabel}. 
	
	This alternative construction of the RRT, (a variant of) the Kingman $n$-coalescent construction, was first discussed by Pittel in~\cite{Pit94} and recovered and used by Addario-Berry and Eslava to study high degrees in RRTs~\cite{AddEsl18}. Later, Eslava extended this to the joint convergence of the depth and degree of vertices with large degree~\cite{Esl16} and also provides a more general coupled recursive construction of a tree $T$ and a permutation $\sigma$ on the labels of the vertices of $T$, coined Robin-Hood pruning~\cite{Esl21}. Here, we further extend Eslava's results from~\cite{Esl16} on the depth and degree of high-degree vertices to also include the label of and graph distance between such high-degree vertices. We also obtain results on the joint behaviour of the degree and depth of and graph distance between vertices with a given label, which combine, extend and improve several known results from the literature on the degree~\cite{KubPan07} and depth~\cite{Dev98} of a vertex with a given label and the graph distance between vertices $n$ and $i_n$, for any sequence $i_n$~\cite{FengSuLiu06}.
	
	The variant of the Kingman $n$-coalescent we use here is a process which starts with $n$ trees, each consisting of only a single root. At every step $n$ through $2$ (counting backwards), a pair of roots is selected uniformly at random and independently of this selection a directed edge is formed between the two roots, each direction being equiprobable. This reduces the number of trees by one and, after completing step $2$, yields a directed tree. It turns out that a particular relabelling of this directed tree yields a tree equal in law to the random recursive tree. Moreover, using the Kingman $n$-coalescent construction simplifies the analysis of degrees, depths, and labels in the RRT model, among other reasons because the degree, depth, and label of the vertices are exchangeable random variables in the Kingman $n$-coalescent.
	
	We now formally introduce the Kingman $n$-coalescent construction of the random recursive tree. Let $\CC\mathcal F_n:=\{f: V(f)=[n]\}$ denote the set of all forests with exactly $n$ vertices. An $n$-chain is a sequence $(f_n,\ldots, f_1)$ of elements of $\CC\mathcal F_n$, where for each integer $1<j\leq n$, $f_{j-1}$ is obtained from $f_j$ by adding a directed edge between the roots of two trees in $f_j$. We write $f_j=\{t_1^{(j)},\ldots, t^{(j)}_j\}$, ordering the trees in increasing order of their smallest-labelled vertex. In particular, $f_n$ consists of $n$ trees, each of which is a root with no edges, and $f_1$ consists of exactly one tree. Also, we let $r(T)$ denote the root of the tree $T$ and write $F_j=\{T^{(j)}_1,\ldots, T^{(j)}_j\}$ for a random element in $\CC\CF_n$ for any $j\in[n]$.
	
	\begin{definition}[Kingman $n$-coalescent] \label{def:king}
		For each $1<j\leq n$, choose a pair \\$\{a_j,b_j\}\subseteq\{\{a,b\}: 1\leq a<b\leq j\}$ independently and uniformly at random; also let $(\xi_j)_{1<j\leq n}$ be a sequence of independent $\mathrm{Bernoulli}(1/2)$ random variables. Initialise the coalescent by $F_n$: a forest of $n$ trees, each consisting of a root and no edges. For $1<j\leq n$, $F_{j-1}$ is obtained from $F_j$ as follows: Add an edge $e_{j-1}$ between the roots $r(T_{a_j}^{(j)})$ and $r(T_{b_j}^{(j)})$; direct $e_{j-1}$ towards $r(T_{a_j}^{(j)})$ if $\xi_j=1$ and towards $r(T_{b_{j}}^{(j)})$ if $\xi_j=0$. Then, $F_{j-1}$ consists of the new tree and the remaining $j-1$ unaltered trees from $F_{j}$.
		
		Finally, let $T^{(n)}:=T^{(1)}_1= F_1$ denote the final tree in the coalescent $\mathbf{C}=(F_n,\ldots, F_1)$.
	\end{definition}
	
	See Figure~\ref{fig:king} for an example of the process. When at step $j$ the edge $e_j=v_ju_j$ is directed towards $u_j$, we say that the associated random variable $\xi_j$ (which we can interpret as flipping a fair coin) favours the root $u_i$. Similarly, we might also say that $\xi_j$ favours $w$ or that the associated coin flip at step $j$ favours $w$, where $w$ is any vertex in the tree that contains $u_j$. 
	
	\begin{figure}
		\centering
		\includegraphics[width=0.54\textwidth]{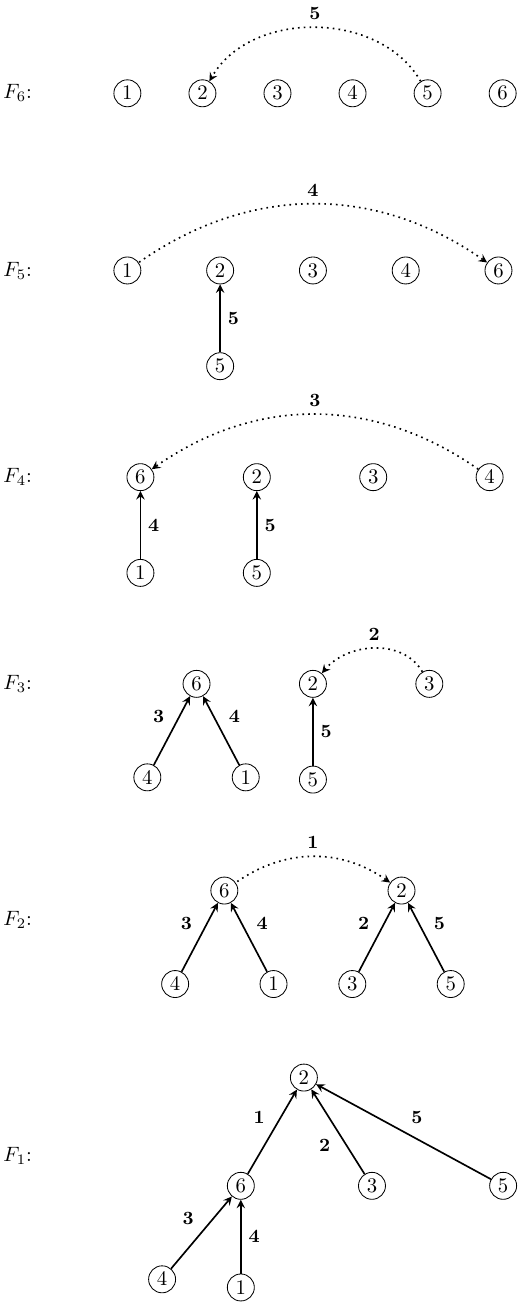}
		\caption{An example of the Kingman $n$-coalescent $\mathbf{C}=(F_n,\ldots, F_1)$ for $n=6$. For $2\leq j\leq 6$, we represent the edge in $E(F_{j-1})\backslash E(F_j)$ with a dotted line in $F_j$. In this case, $\xi_6=\xi_4=\xi_3=1, \xi_5=\xi_2=0$ and $\{a_6,b_6\}=\{2,5\}$, $\{a_5,b_5\}=\{1,5\}$, $ \{a_4,b_4\}=\{1,4\}$, $\{a_3,b_3\}=\{2,3\}$, $\{a_2,b_2\}=\{1,2\}$. From~\cite{Esl16}.}\label{fig:king}
	\end{figure}
	
	The link between the final tree in the coalescent and the RRT is as follows. Let us define the mapping $\sigma_C:V(T^{(n)})\to [n]$ by $\sigma_C(r(T^{(n)})):=1$ and for each edge $e_j=v_ju_j\in E(T^{(n)})$,  $j\in[n-1]$, 
	\be \label{eq:sigmac}
	\sigma_C(v_j):=j+1.
	\ee 
	As all edges are directed towards the root, $v_j\neq v_{j'}$ for all $j\neq j'\in[n-1]$, so that $\sigma_C$ is well-defined. $\sigma_C$ is the relabelling of $T^{(n)}$ into an increasing tree. If we let $\CI_n$ denote the set of all increasing trees on $n$ vertices, then it is clear that the RRT is a uniform element in $\CI_n$. The most important attribute of the $n$-chain in the Kingman $n$-coalescent is that it has a uniform distribution over all possible $n$-chains and that the relabelling of $T^{(n)}$ by $\sigma_C$ yields a uniform element of $\CI_n$, as outlined in the following proposition.
	
	\begin{proposition}[Lemma $7.1$ and Proposition $7.2$ in~\cite{Esl16}]\label{prop:lawking}
		The Kingman $n$-coalescent $\mathbf{C}$ is uniformly random in $\mathcal C\mathcal F_n$, the set of $n$-chains. Moreover, for each $C=(f_n,\ldots, f_1)\in \mathcal C\mathcal F_n$, relabel the vertices in $f_1$ with $\sigma_C$ to obtain a tree $\phi(C)\in \mathcal I_n$. Then the law of $\phi(\mathbf{C})$ is that of a random recursive tree of size $n$.
	\end{proposition}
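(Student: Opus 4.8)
The plan is to prove both statements by elementary counting, exploiting that $\mathbf C$ is assembled from independent uniform choices and that the relabelling map $\phi\colon\mathcal C\mathcal F_n\to\mathcal I_n$ is uniformly finite-to-one. For the first assertion, observe that $\mathbf C$ is built by, at the $i$-th stage (taking $i$ from $n$ down to $2$), choosing an unordered pair of the $i$ present trees ($\binom i2$ possibilities) together with an orientation of the new edge ($2$ possibilities), all choices being independent; hence each admissible one-step transition from an $i$-tree forest to an $(i-1)$-tree forest has probability $1/(i(i-1))$. Distinct sequences of choices are recorded as distinct edges (with directions) in the successive forests, and conversely every $n$-chain prescribes its sequence of choices uniquely, so every $C\in\mathcal C\mathcal F_n$ receives the same probability $\prod_{i=2}^{n}\bigl(i(i-1)\bigr)^{-1}$.

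Next I would check that $\phi(C)\in\mathcal I_n$. The map $\sigma_C$ is a bijection because each non-root vertex of $T^{(n)}$ is the tail of exactly one edge $e_i$. For the increasing property, let $v$ be the tail of $e_i$, so $\sigma_C(v)=i+1$, and let $u$ be its parent in $T^{(n)}$, i.e.\ the head of $e_i$. Then $u$ and $v$ were the roots of the two trees merged at the stage that creates $f_i$, so $u$ is a root in $f_{i+1}$; consequently either $u$ is the root of $T^{(n)}$ with $\sigma_C(u)=1$, or $u$ is the tail of some $e_j$ and being a root of $f_{i+1}$ forces $j\le i$, hence $j<i$ (tails of distinct edges are distinct) and $\sigma_C(u)=j+1<i+1=\sigma_C(v)$. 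As this holds along every edge, $\phi(C)$ is increasing.

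It then remains to show $\phi$ is uniformly finite-to-one and to identify the uniform limit. Fix $t\in\mathcal I_n$, with $V(t)=[n]$ and root $1$. To a permutation $\tau$ of $[n]$ I associate the chain determined by $f_1:=\tau(t)$, by letting $e_i$ be the edge of $f_1$ joining $\tau(i+1)$ to its parent and directed towards that parent ($i\in[n-1]$), and by taking $f_i$ to be the subforest of $f_1$ with edge set $\{e_i,\dots,e_{n-1}\}$. One verifies that $(f_n,\dots,f_1)$ is an admissible $n$-chain: at the stage producing $f_i$ the added edge $e_i$ joins $\tau(i+1)$ to $\tau(q)$, where $q$ is the parent of $i+1$ in $t$, and since $t$ is increasing $q<i+1$, so $\tau(q)$ is still a root of $f_{i+1}$ and lies outside the component of $\tau(i+1)$ (which is exactly the image under $\tau$ of the subtree of $t$ below $i+1$), whence the two trees being merged are distinct. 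For this chain one reads off $\sigma_C=\tau^{-1}$, so $\phi$ of it equals $t$; conversely every $C$ with $\phi(C)=t$ arises this way from $\tau:=\sigma_C^{-1}$, and $\tau\mapsto C$ is injective. Therefore $|\phi^{-1}(t)|=n!$ for every $t\in\mathcal I_n$, so $\phi(\mathbf C)$ is uniform on $\mathcal I_n$; since the random recursive tree $T_n$ is itself uniform on $\mathcal I_n$, we conclude $\phi(\mathbf C)\equalsd T_n$.

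The point needing the most care is the verification, in the last paragraph, that the forests built from a permutation $\tau$ really form an admissible $n$-chain, i.e.\ that at every stage the new edge joins the roots of two \emph{different} trees; this is where the increasing structure of $t$ is essential (lower-labelled vertices are merged earlier yet remain available as roots), and it is precisely this fact that turns the fibre $\phi^{-1}(t)$ into a copy of the symmetric group and thereby yields the uniformity of $\phi(\mathbf C)$.
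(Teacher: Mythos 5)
The paper does not prove this proposition; it imports it verbatim as Lemma 7.1 and Proposition 7.2 of~\cite{Esl16}, so there is no in-paper argument to compare against. What you supply is a complete, self-contained proof, and it is correct. Your three-step structure is the natural one: (a) the one-step transition probabilities $1/(i(i-1))$ factor over independent choices, and every $n$-chain records its generating choices uniquely, giving the uniform law on $\mathcal C\mathcal F_n$; (b) the relabelling $\sigma_C$ is a well-defined bijection because tails of distinct edges are distinct, and it is increasing along edges because the head $u$ of $e_i$ is a root of $f_{i+1}$, hence is either the global root or the tail of some $e_j$ with $j<i$; (c) for a fixed $t\in\mathcal I_n$ the fibre $\phi^{-1}(t)$ is parametrised bijectively by permutations $\tau$ via $f_1:=\tau(t)$, $e_i:=$ the edge from $\tau(i+1)$ to its parent, and the increasing structure of $t$ is exactly what makes each intermediate $f_i$ an admissible forest (the parent $q$ of $i+1$ satisfies $q<i+1$, so $\tau(q)$ is still a root of $f_{i+1}$ and sits in a different component from $\tau(i+1)$). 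Injectivity of $\tau\mapsto C$ follows, as you note, from $\sigma_C=\tau^{-1}$, so $|\phi^{-1}(t)|=n!$ for all $t$; together with $|\mathcal C\mathcal F_n|=\prod_{i=2}^n i(i-1)=n!\,(n-1)!$ and $|\mathcal I_n|=(n-1)!$ this pushes the uniform law on chains forward to the uniform law on increasing trees, which is the RRT. You correctly flag the admissibility check in (c) as the only nontrivial point. One tiny wording slip: you write ``$j\le i$, hence $j<i$ (tails of distinct edges are distinct)''; the parenthetical is the reason $j\neq i$, which combined with $j\le i$ gives $j<i$, so the logic is fine but reads slightly inverted.
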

	
	Recall that $\d_{T_n}(u)$, $h_{T_n}(u)$ and $\dist_{T_n}(u,v)$ denote the in-degree and depth of vertex $u\in[n]$ and the graph distance between vertices $u,v\in[n]$ in the random recursive tree $T_n$, respectively. Similarly, for a realisation of the final tree $T^{(n)}$ in the coalescent $\mathbf{C}$, let $\d_{T^{(n)}}(i), h_{T^{(n)}}(i)$ and $\dist_{T^{(n)}}(i,j)$ denote the in-degree and depth of vertex $i$ and the graph distance between $i$ and $j$, respectively, and let $\ell_{T^{(n)}}(i):=\sigma_C(i)$ denote the relabelling of vertex $i$, $\inn$. That is, $\ell_{T^{(n)}}(i)$ denotes the label that vertex $i$ in $\mathbf{C}$ obtains in the random recursive tree $\phi(\mathbf{C})$. We can then formulate the following corollary.
	
	\begin{corollary}\label{cor:unifnodes}
		Let $T_n$ be a random recursive tree and let $T^{(n)}$ be the resulting tree in the Kingman $n$-coalescent. Let $\sigma:[n]\to[n]$ be a uniform random permutation on $[n]$. Then, 
		\be \ba 
		\big({}&(\d_{T^{(n)}}(i), h_{T^{(n)}}(i),\ell_{T^{(n)}}(i))_{i\in[n]},(\dist_{T^{(n)}}(i,j))_{1\leq i<j\leq n}\big)\\
		&\overset d= \big((\d_{T_n}(\sigma(i)),h_{T_n}(\sigma(i)),\sigma(i))_{i\in[n]},(\dist_{T_n}(\sigma(i),\sigma(j)))_{1\leq i<j\leq n}\big).
		\ea \ee 
		Moreover, jointly for all $i,j\in\N$ and all sets $B\subseteq [n]$, we have
		\be 
		|\{v\in B: \d_{T^{(n)}}(v)=i, h_{T^{(n)}}(v)=j\}|\overset d=|\{v\in[n]: \sigma(v)\in B,\d_{T_n}(\sigma(v))=i,h_{T_n}(\sigma(v))=j\}|.
		\ee 
	\end{corollary}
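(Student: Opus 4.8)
The plan is to transfer every quantity on the left-hand side through the relabelling $\sigma_C$ from Definition~\ref{def:king} and equation~\eqref{eq:sigmac}, and then to identify the joint law of the pair $\bigl(\phi(\mathbf C),\sigma_C\bigr)$.

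Since $\sigma_C$ merely renames the vertices of $T^{(n)}$, it is a root-preserving isomorphism from the rooted tree $T^{(n)}$ onto the increasing tree $\phi(\mathbf C)$. Hence, realising the random recursive tree as $\phi(\mathbf C)$, we get the pointwise identities $d_{T^{(n)}}(v)=d_{\phi(\mathbf C)}(\sigma_C(v))$, $h_{T^{(n)}}(v)=h_{\phi(\mathbf C)}(\sigma_C(v))$ and $\dist_{T^{(n)}}(u,v)=\dist_{\phi(\mathbf C)}(\sigma_C(u),\sigma_C(v))$ for all $u,v\in[n]$, while $\ell_{T^{(n)}}(v)=\sigma_C(v)$ holds by definition. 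Consequently the left-hand vector in the Corollary equals $\Psi\bigl(\phi(\mathbf C),\sigma_C\bigr)$, where $\Psi$ is the deterministic map sending a pair (tree on $[n]$, permutation of $[n]$) to the degrees, depths, images under the permutation and pairwise distances read off as above; likewise the right-hand vector equals $\Psi(T_n,\sigma)$. It therefore suffices to prove that $\bigl(\phi(\mathbf C),\sigma_C\bigr)\overset d=(T_n,\sigma)$, i.e.\ that $\phi(\mathbf C)$ has the RRT law and $\sigma_C$ is, conditionally on $\phi(\mathbf C)$, a uniform permutation of $[n]$.

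For this I would show that $C\mapsto(\phi(C),\sigma_C)$ is a bijection from the set of $n$-chains onto $\CI_n\times S_n$. One direction is the construction of $\sigma_C$ itself; for the other, given $(t,\tau)$ one recovers $C$ by setting $T^{(n)}:=\tau^{-1}(t)$ and declaring the edge added at the step from $F_{i+1}$ to $F_i$ to be the parent-edge of the vertex $\tau^{-1}(i+1)$ in $T^{(n)}$, the edges being added in the order $i=n-1,n-2,\dots,1$. Because $t$ is increasing, the parent of the vertex labelled $i+1$ carries a label $\le i$, so at every step both endpoints of the edge to be added are still roots of their respective, distinct, trees of the partially assembled forest; thus the recipe produces a genuine $n$-chain with $\phi(C)=t$ and $\sigma_C=\tau$, and it is plainly inverse to $C\mapsto(\phi(C),\sigma_C)$. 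Since there are exactly $n!\,(n-1)!$ $n$-chains (at each of the steps $n,\dots,2$ one picks an unordered pair of roots and a direction) and $|\CI_n\times S_n|=(n-1)!\,n!$, the injection is a bijection. As $\mathbf C$ is uniform on the set of $n$-chains by Proposition~\ref{prop:lawking}, the pair $\bigl(\phi(\mathbf C),\sigma_C\bigr)$ is uniform on $\CI_n\times S_n$; in particular $\phi(\mathbf C)$ is a uniform increasing tree, hence an RRT, and $\sigma_C$ is a uniform permutation independent of it. (Alternatively, conditional uniformity of $\sigma_C$ given $\phi(\mathbf C)$ follows from the fact that relabelling a chain by any fixed $\tau$ preserves its law, leaves $\phi$ unchanged, and replaces $\sigma_C$ by $\sigma_C\circ\tau^{-1}$; averaging over $\tau$ gives the claim, while Proposition~\ref{prop:lawking} is invoked for the marginal of $\phi(\mathbf C)$.) Applying $\Psi$ then yields the first display. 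The ``moreover'' statement is a deterministic consequence of it: the counts there are obtained from the degree--depth(--label) coordinates of the two vectors by the fixed map $(x_v)_{v\in[n]}\mapsto\bigl(|\{v\in[n]: x_v=(i,j)\}|\bigr)_{i,j}$ restricted to the relevant index set, so the distributional equality is inherited.

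The main obstacle is the bijection lemma, and within it the verification that the reconstruction recipe always produces a valid $n$-chain --- that at each step the two endpoints of the edge to be added really are roots of two distinct trees present at that moment. This is precisely where the increasing-tree structure of $\phi(C)$ enters; the remaining identities are routine bookkeeping with the definition of $\sigma_C$ in~\eqref{eq:sigmac} and with the fact that graph distances, degrees and depths are preserved by relabelling.
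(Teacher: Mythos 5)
Your proof of the first display is correct and takes the natural route: reduce to showing $(\phi(\mathbf C),\sigma_C)\overset d=(T_n,\sigma)$, and establish this by exhibiting $C\mapsto(\phi(C),\sigma_C)$ as a bijection from the set of $n$-chains onto $\mathcal I_n\times S_n$. The paper itself gives no proof of the corollary (Proposition~\ref{prop:lawking} supplies only the marginal law of $\phi(\mathbf C)$), so your argument --- in particular the reconstruction of the chain from a pair $(t,\tau)$, together with the verification that the increasing-tree property keeps both endpoints of the next edge roots of distinct trees at every step --- is exactly the missing ingredient, and both the bijection argument and the alternative relabelling-invariance argument are sound; the cardinality count $n!(n-1)!=|\mathcal I_n\times S_n|$ is a correct consistency check even though the two-sided inverse already gives bijectivity.

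The ``moreover'' is handled too loosely, and as printed it is not actually a deterministic consequence of the first display. After the substitution $w=\sigma(v)$ the stated right-hand side equals $|\{w\in B:\Zm_n(w)=i,\,h_{T_n}(w)=j\}|$, whose law depends on the particular set $B$ and not just on $|B|$; for instance with $B=\{n\}$ and $i=j=1$ it is identically zero, because vertex $n$ is a leaf of $T_n$. The left-hand side, by contrast, has a law depending only on $|B|$ (exchangeability of coalescent labels) and is a nondegenerate Bernoulli variable for that choice. What your ``restrict to the relevant index set'' step actually produces is $|\{v\in B: d_{T^{(n)}}(v)=i, h_{T^{(n)}}(v)=j\}|\overset d=|\{v\in B: \Zm_n(\sigma(v))=i, h_{T_n}(\sigma(v))=j\}|$, which is true but not the printed claim. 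The version with $\ell_{T^{(n)}}(v)\in B$ in place of $v\in B$ on the left (which is the form that is actually used in passing from~\eqref{eq:xn} to~\eqref{eq:xnking}) does follow by exactly your fixed-map argument applied to the degree--depth--label coordinates; you should state explicitly which version you are proving and flag the discrepancy with the printed statement.
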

	
	In what follows, we replace the subscript $T^{(n)}$ with $n$ for ease of writing, since we work with the coalescent from now on instead of the RRT. As a direct result from Corollary~\ref{cor:unifnodes}, Theorem~\ref{thrm:condconvrrt} follows from the following result (which is a reformulation of Theorem~\ref{thrm:condconvrrt} in terms of the Kingman $n$-coalescent).
	
	\begin{theorem}\label{thrm:condconvking}
		Consider the Kingman $n$-coalescent as in Definition~\ref{def:king}. Fix $k\in\N$ and $(a_i)_{i\in[k]}\in[0,2)^k$ and let $(d_i)_{i\in[k]}$ be $k$ integer-valued sequences such that 
		\be 
		\lim_{n\to\infty}\frac{d_i}{\log n}=a_i,
		\ee 
		for each $i\in[k]$. The tuple 
		\be 
		\bigg(\Big(\frac{h_{n}(i)-(\log n-d_i/2)}{\sqrt{\log n-d_i/4}}\Big)_{i\in[k]}\Big(\frac{\dist_n(i,j)-(2\log n-(d_i+d_j)/2)}{\sqrt{2\log n-(d_i+d_j)/4}}\Big)_{1\leq i<j\leq k}\bigg),
		\ee 
		conditionally on the event $d_{n}(i)\geq d_i$ for all $i\in[k]$, converges in distribution to 
		\be \label{eq:withoutlab}
		\bigg((H_i)_{i\in[k]},\bigg(\frac{\sqrt{4-a_i}H_i+\sqrt{4-a_j}H_j}{\sqrt{8-(a_i+a_j)}}\bigg)_{1\leq i<j\leq k}\bigg),
		\ee 
		where the $(H_i)_{i\in[k]}$ are independent standard normal random variables. Additionally assume that for all $i\in[k]$, $d_i$ diverges as $n\to\infty$. Then, the tuple
		\be \ba 
		\bigg({}&\Big(\frac{h_{n}(i)-(\log n-d_i/2)}{\sqrt{\log n-d_i/4}}, \frac{\log(\ell_{n}(i))-\log n-d_i/4}{\sqrt{d_i/4}}\Big)_{i\in[k]},\\
		&\Big(\frac{\dist_n(i,j)-(2\log n-(d_i+d_j)/2)}{\sqrt{2\log n-(d_i+d_j)/4}}\Big)_{1\leq i<j\leq k}\bigg),
		\ea \ee 
		conditionally on the event $d_{n}(i)\geq d_i$ for all $i\in[k]$, converges in distribution to 
		\be\ba   \label{eq:withlab}
		\bigg(\Big({}&M_i\sqrt{\frac{a_i}{4-a_i}}+N_i\sqrt{1-\frac{a_i}{4-a_i}},M_i \Big)_{ i\in[k]},\\
		&\Big(\frac{M_i\sqrt{a_i}+N_i\sqrt{4-2a_i}+M_j\sqrt{a_j}+N_j\sqrt{4-2a_j}}{\sqrt{8-(a_i+a_j)}}\Big)_{1\leq i<j\leq k}\bigg),
		\ea\ee
		where the $(M_i,N_i)_{i\in[k]}$ are independent standard normal random variables. 
	\end{theorem}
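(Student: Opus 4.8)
By Corollary~\ref{cor:unifnodes} and the exchangeability of the coalescent statistics it suffices to prove both displays for the fixed vertices $1,\dots,k$ of the Kingman $n$-coalescent (Definition~\ref{def:king}), and I would track the trees $T^{(\cdot)}(1),\dots,T^{(\cdot)}(k)$ containing them throughout the coalescent. For a single vertex $v$, call step $i$ \emph{$v$-relevant} if the tree containing $v$ is one of the two trees merged in passing from $F_i$ to $F_{i-1}$: this happens with probability $2/i$, independently over $i$, and on such a step the fair coin $\xi_i$ (``heads'' $=$ the new edge is directed at the root of $v$'s tree) is independent of everything else. Reading off Definition~\ref{def:king} and~\eqref{eq:sigmac}, $d_n(v)$ is the length of the initial run of heads in the $v$-relevant coin sequence (read in chronological, i.e.\ decreasing, order of step index), $\ell_n(v)$ equals the step index $J^v$ at which $v$ first loses its root status — equivalently, at which the first tails occurs — and $h_n(v)=1+\sum_{i<J^v}\1{\text{$i$ is $v$-relevant, coin tails}}$, which given $J^v$ is $1$ plus a Poisson--binomial with parameters $(1/i)_{i<J^v}$.

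\emph{Step 1: one vertex under $\{d_n(v)\ge d\}$ with $d/\log n\to a\in[0,2)$.} This conditioning fixes the first $d$ $v$-relevant coins to be heads and leaves the coins at steps below $J^v_d$ (the $d$-th $v$-relevant step) untouched; moreover $J^v$ exceeds $J^v_d$ by only a $\mathrm{Geometric}(1/2)$ number $\Op(1)$ of further relevant steps, so $\log\ell_n(v)=\log J^v_d+\op(\sqrt{\log n})$. Writing $S_m$ for the number of $v$-relevant steps in $[m,n]$, one has $\{J^v_d\le m\}=\{S_m\ge d\}$ with $S_m$ Poisson--binomial of mean and variance $2\log(n/m)+O(1)$, and a Lindeberg CLT inverts this to $\log J^v_d=(\log n-d/2)+\sqrt{d/4}\,M+\op(\sqrt{\log n})$, where $M$ is asymptotically standard normal and depends only on the relevant-step indicators at steps $\ge J^v_d$. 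Conditionally on $J^v_d$ the depth $h_n(v)$ has mean and variance $\sim\log J^v_d\sim\log n-d/2$, so a second Lindeberg CLT gives $h_n(v)=\log J^v_d+\sqrt{\log n-d/2}\,N+\op(\sqrt{\log n})$ with $N$ standard normal and independent of $M$ (it uses only the coins at steps $<J^v_d$). Summing the last two displays, centring by $\log n-d/2$ and dividing by $\sqrt{\log n-d/4}$ (so the variance $d/4+(\log n-d/2)=\log n-d/4$ appears) yields $(\log\ell_n(v)-(\log n-d/2))/\sqrt{d/4}\toindis M$ and $(h_n(v)-(\log n-d/2))/\sqrt{\log n-d/4}\toindis\sqrt{a/(4-a)}\,M+\sqrt{1-a/(4-a)}\,N$, the latter being a single standard normal $H$ once the label is omitted, which is why no divergence of $d$ is needed for the first display.

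\emph{Step 2: decoupling the $k$ vertices.} The key point is that $T^{(\cdot)}(1),\dots,T^{(\cdot)}(k)$ merge \emph{with one another} only during the last $\Op(1)$ steps of the coalescent, uniformly over $\mathcal E:=\{d_n(i)\ge d_i\ \forall i\in[k]\}$. Unconditionally this is immediate, since two prescribed trees form the merged pair at step $i$ with probability $1/\binom{i}{2}$ and $\sum_i1/\binom{i}{2}<\infty$; under $\mathcal E$ the merge times $I_{i,j}$ remain tight because $\mathcal E$ is essentially a constraint on $\sum_i d_i$ direction coins, whereas the $I_{i,j}$ are governed by the (independent) pairing choices, so conditioning on $\mathcal E$ alters their law only by a negligible correction. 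Away from the $\Op(1)$ resulting shared steps the $k$ relevant-coin sequences couple with $k$ \emph{independent} thinned fair-coin sequences of the Step~1 type (the ``at most one merge per step'' constraint binding with summable probability), whence $\big(\log\ell_n(i),h_n(i)\big)_{i\in[k]}$ converges jointly to independent copies of the Step~1 limit, namely $\big(M_i,\sqrt{a_i/(4-a_i)}M_i+\sqrt{1-a_i/(4-a_i)}N_i\big)_{i\in[k]}$ with $(M_i,N_i)_{i\in[k]}$ i.i.d.\ standard normal (respectively independent standard normals $(H_i)_{i\in[k]}$ when labels are dropped).

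\emph{Step 3: distances and assembly.} Since $\dist_n(i,j)=h_n(i)+h_n(j)-2h_n(\mathrm{LCA}_{i,j})$ and $\mathrm{LCA}_{i,j}$ is the root of the tree created at the ($\Op(1)$-late) step $I_{i,j}$ at which $T^{(\cdot)}(i)$ and $T^{(\cdot)}(j)$ first merge, its depth in the final tree is at most $I_{i,j}-2=\Op(1)$, so $\dist_n(i,j)-(2\log n-(d_i+d_j)/2)=\big(h_n(i)-(\log n-d_i/2)\big)+\big(h_n(j)-(\log n-d_j/2)\big)+\Op(1)$. Dividing by $\sqrt{2\log n-(d_i+d_j)/4}=\sqrt{(\log n-d_i/4)+(\log n-d_j/4)}$ and using $\log n-d_i/4\sim(4-a_i)(\log n)/4$, $\log n-d_i/2\sim(4-2a_i)(\log n)/4$ and $d_i/4\sim a_i(\log n)/4$ turns the rescaled distance into $\tfrac{\sqrt{4-a_i}\,H_i+\sqrt{4-a_j}\,H_j}{\sqrt{8-(a_i+a_j)}}$ for~\eqref{eq:withoutlab} and, since $\sqrt{4-a_i}\,H_i=\sqrt{a_i}\,M_i+\sqrt{4-2a_i}\,N_i$, into $\tfrac{M_i\sqrt{a_i}+N_i\sqrt{4-2a_i}+M_j\sqrt{a_j}+N_j\sqrt{4-2a_j}}{\sqrt{8-(a_i+a_j)}}$ for~\eqref{eq:withlab}. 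As the rescaled distances are continuous (indeed affine-plus-$\op(1)$) functions of the rescaled depths and labels, the full vector converges by the continuous mapping theorem (or a Cramér--Wold argument), which proves both parts. The step I expect to be the main obstacle is the decoupling of Step~2 under the \emph{simultaneous} conditioning on all the events $\{d_n(i)\ge d_i\}$: one must show that forcing long initial runs of heads — which in particular forces the direction of any early merge between two tracked trees — does not create a non-negligible long-range dependence among the $k$ relevant-coin processes, and one must make the inversion CLT for $\log J^{(i)}_{d_i}$ effective with the precise $\sqrt{d_i/4}$ scaling uniformly in the regime $d_i\asymp a_i\log n$; these are precisely the statements that the preliminary results of Section~\ref{sec:pre} are designed to supply.
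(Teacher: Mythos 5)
Your architecture matches the paper's: prove the joint convergence of depth and label via the coalescent coin-flipping representation, decouple the $k$ vertices using the tightness of the merge time, and infer the distance limit from the fact that $\dist_n(i,j)$ is asymptotically the sum of the two depths. Steps~1 and~2 are sketched more informally than the paper's Propositions~\ref{prop:limit} and~\ref{prop:condprobmult} (and Lemmas~\ref{lemma:probonevert}--\ref{lemma:tauk}), but they describe the same computation: the paper works with truncated selection sets $\S_{n,1}(i)$ and expresses the joint law through Lemma~\ref{lemma:probonevert}, while you work with the $d$-th relevant step $J^v_d$ and invert the CLT for $S_m$; these are two parametrisations of the same quantity. (Incidentally, your centring $\log n - d_i/2$ for $\log\ell_n(i)$ is the correct one used throughout the paper's proof; the theorem statement's $\log n + d_i/4$ is a typo.)

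The one place where you deviate from the paper in a way that opens a gap is Step~3. You use the exact identity $\dist_n(i,j)=h_n(i)+h_n(j)-2h_n(\mathrm{LCA}_{i,j})$ and bound $h_n(\mathrm{LCA}_{i,j})\le I_{i,j}-2$, then invoke \emph{conditional} tightness of $I_{i,j}$ under $\mathcal{D}_k=\{d_n(i)\ge d_i,\,i\in[k]\}$ to conclude $h_n(\mathrm{LCA}_{i,j})=\Op(1)$. The paper does not establish conditional tightness of $\tau_k$: Lemma~\ref{lemma:tauk} only gives $\P{\tau_k<\lceil(\log n)^2\rceil\mid\mathcal D_k}\ge1-\mathcal O(1/\log n)$, which combined with your crude bound $h_n(\mathrm{LCA}_{i,j})\le\tau_k-2$ would only yield $h_n(\mathrm{LCA}_{i,j})=\Op((\log n)^2)$, not $\op(\sqrt{\log n})$. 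Your hand-wave that ``conditioning on $\mathcal E$ alters the law of $I_{i,j}$ only by a negligible correction'' is plausible but is exactly the kind of claim that needs a careful Bayes-and-union argument, and conditioning on $\mathcal{D}_k$ does constrain the pairing choices (each vertex must be selected at least $d_i$ times), so it is not purely a constraint on the direction coins.

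The paper avoids this by never touching the LCA directly: it sandwiches $\dist_n(i,j)$ between the trivial upper bound $h_n(i)+h_n(j)$ and, on $\{\tau_k<t_n\}$, the lower bound $h_{n,1}(i)+h_{n,1}(j)$, and then Lemma~\ref{lemma:h2n} shows $h_n(i)-h_{n,1}(i)=\op(\sqrt{\log n})$ conditionally. That sandwich implicitly proves exactly what you need, namely $h_n(\mathrm{LCA}_{i,j})=\op(\sqrt{\log n})$, but only the soft $(\log n)^2$ bound on $\tau_k$ is used. Your proof can be repaired without proving conditional tightness: observe that on $\{\tau_k<t_n\}$ one has $h_n(\mathrm{LCA}_{i,j})\le h_n(i)-h_{F_{\tau_{\{i,j\}}}}(i)\le h_n(i)-h_{n,1}(i)=h_{n,2}(i)$, and then apply Lemma~\ref{lemma:h2n}. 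With that replacement of $\Op(1)$ by $\op(\sqrt{\log n})$, the rest of Step~3 (the continuous-mapping/Slutsky assembly and the algebra relating $H_i,M_i,N_i$) is correct.
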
 
	
	\begin{remark}
		As is the case in Remark~\ref{rem:degthrm}, the same results in Theorem~\ref{thrm:condconvking} can be obtained when working with the conditional event $\{\d_n(v_i)= d_i, i\in[k]\}$ rather than $\{\d_n(v_i)\geq d_i, i\in[k]\}$, with an almost identical proof.
	\end{remark}
	
	Moreover, Theorem~\ref{thrm:condconvking} can be used to prove Proposition~\ref{prop:momentconv}. By Corollary~\ref{cor:unifnodes}, we can redefine the random variables $X_s^{(n)}(B),X_{\geq s}^{(n)}(B)$ and $\wt X_s^{(n)}(B),\wt X_{\geq s}^{(n)}(B)$, as defined in~\eqref{eq:xn}, in terms of the Kingman $n$-coalescent, by writing, for $s\in \Z, B\in \CB(\R^2)$, 
	\be \ba\label{eq:xnking}
	X^{(n)}_s(B)&:=\Big|\Big\{\inn: \d_n(i)=\lfloor \log_2n\rfloor +s, \Big(\frac{h_n(i)-(\log n-(\lfloor \log_2n\rfloor +s)/2)}{\sqrt{\log n-(\lfloor \log_2n\rfloor +s)/4}},\\
	&\hphantom{:=\Big|\Big\{\inn: \d_n(i)=\lfloor \log_2n\rfloor +s, \Big(,} \frac{\log \ell_n(i)-(\log n-(\lfloor \log_2n\rfloor +s)/2)}{\sqrt{(\lfloor \log_2n\rfloor+s)/4}}\Big)\in B\Big\}\Big|,\\
	X^{(n)}_{\geq s}(B)&:=\Big|\Big\{\inn: \d_n(i)\geq\lfloor \log_2n\rfloor +s, \Big(\frac{h_n(i)-(\log n-(\lfloor \log_2n\rfloor +s)/2)}{\sqrt{\log n-(\lfloor \log_2n\rfloor +s)/4}},\\
	&\hphantom{:=\Big|\Big\{\inn: \d_n(i)=\lfloor \log_2n\rfloor +s, \Big(,} \frac{\log \ell_n(i)-(\log n-(\lfloor \log_2n\rfloor +s)/2)}{\sqrt{(\lfloor \log_2n\rfloor+s)/4}}\Big)\in B\Big\}\Big|,\\
	\wt X^{(n)}_s(B)&:=\Big|\Big\{\inn: \d_n(i)=\lfloor \log_2n\rfloor +s, \Big(\frac{h_n(i)-\mu\log n}{\sqrt{\sigma^2\log n}}, \frac{\log \ell_n(i)-\mu\log n}{\sqrt{(1-\sigma^2)\log n}}\Big)\in B\Big\}\Big|,\\
	\wt X^{(n)}_{\geq s}(B)&:=\Big|\Big\{\inn: \d_n(i)\geq\lfloor \log_2n\rfloor +s, \Big(\frac{h_n(i)-\mu\log n}{\sqrt{\sigma^2\log n}}, \frac{\log \ell_n(i)-\mu\log n}{\sqrt{(1-\sigma^2)\log n}}\Big)\in B\Big\}\Big|.
	\ea \ee 
	We can also reformulate Theorem~\ref{thrm:fixedlabel} in terms of the Kingman $n$-coalescent. As is the case with Theorem~\ref{thrm:condconvrrt}, combining Corollary~\ref{cor:unifnodes} with the following theorem immediately implies Theorem~\ref{thrm:fixedlabel}.
	
	\begin{theorem}\label{thrm:kingfixlabel}
		Consider the Kingman $n$-coalescent as in Definition~\ref{def:king}. Fix $k\in\N$ and let $(\ell_i)_{i\in[k]}\in[n]^k$ be $k$ distinct integer-valued sequences such that $\ell_i$ increases with $n$, diverges as $n\to\infty$ and such that 
		\be \label{eq:cij}
		c_{i,j}:=\lim_{n\to\infty}\sqrt{\frac{\log \ell_i}{\log \ell_i+\log \ell_j}}
		\ee 
		exists for all $1\leq i<j\leq k$. Let $(N_i)_{i\in[k]}$ be $k$ independent standard normal random variables. We also define for $(\rho_i)_{i\in[k]}\in(0,1)^k$ and each $i\in[k]$, 
		\be\ba \label{eq:dstar}
		d^*_n(i)&:=\begin{cases} \frac{\d_n(i)-\log(n/\ell_i)}{\sqrt{\log(n/\ell_i)}}, & \mbox{if }\ell_i=o(n),\\
			\d_n(i), &\mbox{otherwise},
		\end{cases}\\
		Z_i&\sim \begin{cases}
			\mathcal N(0,1) &\mbox{if } \ell_i=o(n),\\
			\mathrm{Poi}(\log(1/\rho_i)) & \mbox{if }\ell_i=(1+o(1))\rho_i n,\\
			0 & \mbox{if } \ell_i=n-o(n).
		\end{cases}
		\ea\ee 
		where the $Z_i$ are independent and also independent of the $(N_i)_{i\in[k]}$. The tuple
		\be
		\bigg(\Big(d^*_n(i),\frac{h_n(i)-\log \ell_i}{\sqrt{\ell_i}}\Big)_{i\in[k]}, \Big(\frac{\dist_n(i,j)-(\log \ell_i+\log \ell_j)}{\sqrt{\log \ell_i+\log \ell_j}}\Big)_{1\leq i<j\leq k}\bigg),
		\ee 
		conditionally on the event $\ell_n(i)=\ell_i$ for all $i\in[k]$, converges in distribution to 
		\be 
		\big((Z_i,N_i)_{i\in[k]}, (c_{i,j}N_i+c_{j,i}N_j)_{1\leq i<j\leq k}\big).
		\ee 
	\end{theorem}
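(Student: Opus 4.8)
The plan is to work entirely inside the Kingman $n$-coalescent of Definition~\ref{def:king}: by Corollary~\ref{cor:unifnodes} this is equivalent to Theorem~\ref{thrm:fixedlabel}, and by the exchangeability of the $n$ roots of $F_n$ I may take the marked vertices to be $1,\dots,k$ and relabel them so that $\ell_1>\dots>\ell_k$. The starting point is the coin-flip encoding recalled after Definition~\ref{def:king}. For a vertex $i$ write $\tau_j(i)$ for the tree of $F_j$ containing $i$ and $B_j(i):=\ind\{\tau_j(i)\text{ is one of the two trees merged at step }j\}$; the selection rule gives $\Pv(B_j(i)=1)=2/j$, independently over $j$ and of the coins $(\xi_j)$. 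Vertex $i$ stays a root until its first ``tails'', so that $\ell_n(i)$ is the step of that first tails, $d_n(i)$ is the number of ``heads'' strictly before it, and $h_n(i)$ is the \emph{total} number of tails thrown along the lineage of $i$, i.e.\ the number of steps $j$ at which $\tau_j(i)$ is selected and its current root loses. Writing $\mathrm{LCA}_{i,j}$ for the lowest common ancestor of $i$ and $j$ — which is the winning root at the step $T_{i,j}$ where the trees of $i$ and $j$ first coalesce — one has the deterministic identity $\dist_n(i,j)=h_n(i)+h_n(j)-2h_n(\mathrm{LCA}_{i,j})$, and $h_n(\mathrm{LCA}_{i,j})$ is precisely the contribution common to $h_n(i)$ and $h_n(j)$ coming from steps below $T_{i,j}$.

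Next I would work out the conditional law of a single marked vertex. Conditioning on $\{\ell_n(i)=\ell_i\}$ forces: at step $\ell_i$ the tree $\tau_{\ell_i}(i)$ is selected and $i$ loses; at every step $j>\ell_i$, if $\tau_j(i)$ is selected then $i$ wins; and it imposes nothing on steps $j<\ell_i$. Since $\Pv(\tau_j(i)\text{ selected, heads})=1/j$ and $\Pv(\tau_j(i)\text{ not selected})=(j-2)/j$, a one-line computation shows that, conditionally, $d_n(i)$ is a sum of independent $\mathrm{Bernoulli}(1/(j-1))$, $\ell_i<j\le n$, with mean $\log(n/\ell_i)+o(1)$; the three regimes in~\eqref{eq:dstar} follow from the CLT (when $\ell_i=o(n)$, the mean diverges), the law of small numbers (when $\ell_i=(1+o(1))\rho_in$, the mean tends to $\log(1/\rho_i)$) and the fact that this mean tends to $0$, hence $d_n(i)\toinp0$ (when $\ell_i=n-o(n)$). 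For the depth, conditionally $h_n(i)$ is distributed as $1$ plus a sum of independent $\mathrm{Bernoulli}(1/j)$, $2\le j\le\ell_i-1$, with mean $\log\ell_i+O(1)$, so the CLT gives $(h_n(i)-\log\ell_i)/\sqrt{\log\ell_i}\convd N_i\sim\mathcal N(0,1)$. Since $d_n(i)$ depends only on steps $>\ell_i$ and $h_n(i)$ only on steps $\le\ell_i$, they are conditionally independent, matching $Z_i\perp N_i$.

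For the distances I would first establish that coalescence times are tight. A telescoping product along the $n$-chain gives $\Pv(T_{i,j}\le m)=\prod_{r=m+1}^n(1-\binom{r}{2}^{-1})=\tfrac{(m-1)(n+1)}{(n-1)(m+1)}\to\tfrac{m-1}{m+1}$, so $T_{i,j}$ converges in distribution (to the law on $\{2,3,\dots\}$ with mass $2/(m(m+1))$ at $m$); more generally the full coalescent genealogy of $\{1,\dots,k\}$ inside $F_n$ converges and is tight. Because each $\ell_i\to\infty$, with probability tending to one every marked pair coalesces at a step $T_{i,j}<\min(\ell_i,\ell_j)$ — so both endpoints are already buried when their blocks meet, and no marked vertex is an ancestor of another — and, since these coalescences occur at uniformly bounded steps while the label-determining events sit at steps $\ge\ell_i$, conditioning on $\{\ell_n(i)=\ell_i,\,i\in[k]\}$ does not change the asymptotic law of the genealogy. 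Hence $h_n(\mathrm{LCA}_{i,j})$, the number of tails thrown at the (at most $T_{i,j}-2$) steps below $T_{i,j}$, is $\Op(1)=\op(\sqrt{\log\ell_i})$, and the identity above gives
\[
\frac{\dist_n(i,j)-(\log\ell_i+\log\ell_j)}{\sqrt{\log\ell_i+\log\ell_j}}
= c^{(n)}_{i,j}\,\frac{h_n(i)-\log\ell_i}{\sqrt{\log\ell_i}}
+ c^{(n)}_{j,i}\,\frac{h_n(j)-\log\ell_j}{\sqrt{\log\ell_j}}+\op(1),
\]
where $c^{(n)}_{i,j}:=\sqrt{\log\ell_i/(\log\ell_i+\log\ell_j)}\to c_{i,j}$ by~\eqref{eq:cij}.

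It then remains to upgrade these one-vertex statements to the joint convergence in the theorem. Decompose $h_n(i)=A_i+S_i$, where $S_i=\Op(1)$ collects the tails thrown once $\tau_{\cdot}(i)$ has merged with another marked block (shared across the $A$'s according to the tight genealogy) and $A_i$ collects those thrown while $\tau_{\cdot}(i)$ is still disjoint from all other marked blocks; similarly $d_n(i)$ involves only the still-disjoint block $\tau_{\cdot}(i)$, at steps $>\ell_i$. Since at any step the probability that two fixed distinct blocks are \emph{both} selected is $O(j^{-2})$, the blocks carrying $(A_i)_{i\in[k]}$ and those carrying $(d_n(i))_{i\in[k]}$ behave asymptotically independently, so the CLT and the law of small numbers apply coordinatewise: $(d_n^*(i))_{i\in[k]}\convd(Z_i)_{i\in[k]}$ with independent coordinates, $((h_n(i)-\log\ell_i)/\sqrt{\log\ell_i})_{i\in[k]}\convd(N_i)_{i\in[k]}$ with independent coordinates, the two families jointly independent (degrees on steps $>\ell_i$, depths on steps $\le\ell_i$); combined with the previous paragraph this is exactly the claimed limit, and Theorem~\ref{thrm:fixedlabel} follows via Corollary~\ref{cor:unifnodes}. \emph{The hard part} is making the ``asymptotically independent blocks'' statement rigorous together with the fact that conditioning simultaneously on all of $\{\ell_n(i)=\ell_i\}$ — which couples ``which block is selected'' with ``which way the coin points'' at several steps at once — neither perturbs the coalescence times $T_{i,j}$ (so that $h_n(\mathrm{LCA}_{i,j})$ stays $\Op(1)$) nor re-couples the ``heads part'' (degrees), the ``tails part'' (depths) and the distinct marked blocks; the efficient route is to compute the relevant joint factorial moments of indicator sums and check they factorise in the limit, exactly in the spirit of Proposition~\ref{prop:momentconv}.
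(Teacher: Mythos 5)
Your outline captures the right mechanism — coin-flip encoding of degree/depth/label, explicit conditional one-vertex laws, a tight genealogy, and a reduction of the distance to the depths — and the one-vertex conditional computations (degree $=\sum_{\ell_i<j\le n}\mathrm{Bernoulli}(1/(j-1))$, depth $=1+\sum_{j<\ell_i}\mathrm{Bernoulli}(1/j)$, independent because they live on disjoint step ranges) are correct. But the route differs from the paper's in two substantive ways, and it leaves the genuinely hard step unresolved.

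For the distances you invoke the tree identity $\dist_n(i,j)=h_n(i)+h_n(j)-2h_n(\mathrm{LCA}_{i,j})$ and argue $h_n(\mathrm{LCA}_{i,j})=\Op(1)$ because the (conditional) coalescence times are tight. The paper instead sandwiches: the trivial upper bound $\dist_n(i,j)\le h_n(i)+h_n(j)$ and, on the high-probability event $\{\tau_k<t_n\}$ with $t_n=\min_i\lceil\log\ell_i\rceil$, the lower bound $\dist_n(i,j)\ge h_{n,1}(i)+h_{n,1}(j)$; then Lemma~\ref{lemma:h2n} gives $h_n(\cdot)-h_{n,1}(\cdot)=\op(\sqrt{\log\ell_\cdot})$. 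Your route is slightly stronger (it asserts $\Op(1)$ rather than $\op(\sqrt{\log\ell_i})$) but requires establishing \emph{conditional} tightness of the $T_{i,j}$ given the full label event, which you assert but do not prove; the paper only proves the weaker bound $\P{\tau_k<\lceil\log\ell_{\min}\rceil\mid\ell_n(i)=\ell_i,i\in[k]}\to 1$ (Lemma~\ref{lemma:tauk}, \eqref{eq:taulabel}), which is exactly what its sandwich requires and is easier to obtain. Both schemes give the same conclusion, so this is a genuine alternative decomposition rather than an error.

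Where there is a real gap is in the step you yourself flag as ``the hard part'': making rigorous that the joint conditioning $\{\ell_n(i)=\ell_i,\,i\in[k]\}$ neither perturbs the genealogy below $\min_i\ell_i$ nor re-couples degrees, depths and distinct marked blocks. You gesture at ``compute the relevant joint factorial moments ... in the spirit of Proposition~\ref{prop:momentconv}'', but Proposition~\ref{prop:momentconv} is the tool for Theorem~\ref{thrm:degdepthlabelrrt} (a point-process limit for \emph{near-maximal} degrees); it is not what the paper uses here, and for fixed labels there is no point process whose factorial moments one would compute. The paper's actual mechanism is the truncated-selection-set decoupling: conditionally on $\overline{\S}_{n,1}=\bar J$ with $\bar J$ in the set $\wt{\cB}_n$ of pairwise-disjoint truncated selection sets, the $k$ events factorise exactly (Lemma~\ref{lemma:probmultvert}); Lemma~\ref{lemma:labeldegprob}, \eqref{eq:wtbc}, shows that $\P{\ell_n(i)=\ell_i,i\in[k],\,\overline{\S}_{n,1}\in\wt{\cB}_n^c}=o(n^{-k})$, which together with $\P{\ell_n(i)=\ell_i,i\in[k]}=1/(n)_k$ makes the bad event negligible; and Lemma~\ref{lemma:StoR} replaces the dependent truncated selection sets by i.i.d.\ copies. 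This machinery is exactly what turns your heuristic ``$O(j^{-2})$ chance of simultaneous selection, hence asymptotic independence'' into a proof, and it needs to be spelled out rather than referred to the wrong proposition.
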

	
	\begin{remark}
		It is necessary to work on the conditional event $\{\ell_n(i)=\ell_i, i\in[k]\}$ in Theorem~\ref{thrm:kingfixlabel}, despite this not being the case in Theorem~\ref{thrm:fixedlabel}. Since vertices $1,\ldots, k$ in the Kingman $n$-coalescent obtain a \emph{random} label in the relabelled tree $\phi(\mathbf{C})$ (which is equal in law to the random recursive tree by Proposition~\ref{prop:lawking}), the need to condition on their relabelling $\ell_n(i)=\ell_i,i\in[k],$ arises.
	\end{remark}
	
	In the next sections we analyse the Kingman $n$-coalescent construction to prove Theorems~\ref{thrm:condconvking} and~\ref{thrm:kingfixlabel} and Proposition~\ref{prop:momentconv}. 
	
	\section{Preliminary results} \label{sec:pre}

	In this section we provide some important intermediate results related to the Kingman $n$-coalescent construction, provided in Section~\ref{sec:king}. We focus on two things in this section. First, we study the evolution of the degree, depth, and label of vertices $1,\ldots, k$ in the Kingman $n$-coalescent, which is an important first step in proving the theorems in Section~\ref{sec:king}. Second, we investigate the correlations between the steps $j\in[2,n]$ at which vertices $1,\ldots, k$ are selected in the coalescent.
	
	Though the theorems presented in Section~\ref{sec:king} are concerned with the graph distance between vertices $1,\ldots, k$ as well as their degree, depth, and label, we do not include this in our analysis yet. While the latter quantities are easier to explicitly understand in terms of the Kingman $n$-coalescent, the graph distance does not lend itself to an equally elegant analysis. As it turns out, though, there is a close relation between the depth of and graph distance between the vertices $1, \ldots, k$ which allows us to infer the scaling limit of the graph distances from the results on the depth. We make use of this relation in Section~\ref{sec:kingproof} when proving Theorems~\ref{thrm:condconvking} and~\ref{thrm:kingfixlabel}.
	
	\subsection{Analysis of the Kingman $n$-coalescent}
	
	We start by introducing some notation related to the Kingman $n$-coalescent. For an $n$-chain $C=(f_n,\ldots, f_1)$ and some $i,j\in[n]$, let $T^{(j)}(i)$ denote the tree in $f_j$ that contains vertex $i$. For $\inn$, let $s_{i,j}$ be the indicator that $T^{(j)}(i)\in\{T_{a_j}^{(j)},T_{b_j}^{(j)}\}$ and let $h_{i,j}$ be the indicator that the edge $e_j$ is directed outwards from $r(T^{(j)}(i))$, $2\leq j\leq n$. That is, $s_{i,j}$ equals one if $i$ is part of one of the two trees selected to merge at step $j$, and $h_{i,j}$ is one if $s_{i,j}$ is one and if the new edge $e_j$ causes vertex $i$ to increase its depth by one, see Figure~\ref{fig:edgedirect}.
	
	\begin{figure}[t]
		\centering
		\includegraphics[width=0.7\textwidth]{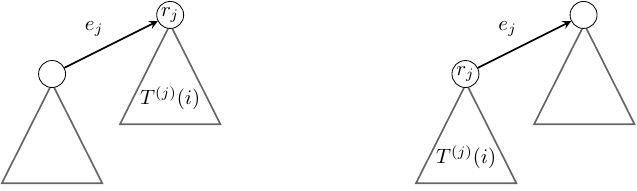}
		\caption{For $\inn$ and $2\leq j\leq n$, let $r_j:=r(T^{(j)}(i))$ denote the root of the tree in $f_j$ that contains vertex $i$, and suppose that $T^{(j)}(i)\in \{T^{(j)}_{a_j},T^{(j)}_{b_j}\}$. If $e_j$ is directed towards $r_j$, then the degree of $r_j$ increases by one in $F_{j-1}$. If $e_i$ is directed outwards from $r_j$, then the depth of each $v\in T^{(j)}(i)$ increases by one in $F_{j-1}$. From~\cite{Esl16}.}\label{fig:edgedirect}
	\end{figure}
	
	Since the trees selected to be merged at every step are independent and uniformly distributed, the variables $(s_{i,j})_{2\leq j\leq n}$ are independent Bernoulli random variables for any fixed $\inn$, with $\E{s_{i,j}}=2/j$. Similarly, since the direction of the edge $e_j$ depends only on $\xi_j$, the variables $(h_{i,j})_{2\leq j\leq n}$ are also independent Bernoulli random variables for any fixed $\inn$, with $\E{h_{i,j}}=1/j$. 
	
	Let us define
	\be 
	\mathcal S_n(i):= \{2\leq j\leq n: s_{i,j}=1\},\qquad \inn,
	\ee 
	and set $S_n(i):=|\mathcal S_n(i)|$. We refer to $\S_n(i)$ as the \emph{selection set} of vertex $i$. We can express the quantities $\d_n(i),h_n(i)$ and $\ell_n(i)$ in terms of $\S_n(i)$ and the indicator variables $(h_{i,j})_{j\in\S_n(i)}$. Namely, if we write $\S_n(i)=\{j_{i,1}, \ldots, j_{i,S_n(i)}\}$ with $j_{i,1}>j_{i,2}>\ldots>j_{i,S_n(i)}$, then
	\be \ba\label{eq:quantdescr}
	\d_n(i)&=\max\{0\leq d \leq S_n(i): h_{i,j_{i,1}}=\ldots =h_{i,j_{i,d}}=0\}, \\
	h_n(i)&=\sum_{j\in\S_n(i)}h_{i,j}=\sum_{j=2}^n h_{i,j}, \\ \ell_n(i)&=\max\{j\in \S_n(i): h_{i,j}=1\}=\max\{\inn: h_{i,j}=1\},
	\ea\ee 
	where we set $h_{i,1}=1$ for all $\inn$, so that $\max\{j\in[n]: h_{i,j}=1\}=1$ if there is no $2\leq j\leq n$ such that $h_{i,j}=1$ (which corresponds to vertex $i$ being the root of $T^{(n)}$, so that its relabelling with $\sigma_C$ as in~\eqref{eq:sigmac} yields $\ell_n(i)=1$). Note that there is always a unique vertex $i$ for which $h_{i,j}=0$ for all $2\leq j\leq n$, so that $\ell_n(i)\neq \ell_n(i')$ whenever $i\neq i'$. Explaining~\eqref{eq:quantdescr} in words, the degree of a vertex $i$ is equal to the length of the first streak of zeros of the indicators $(h_{i,j_\ell})_{\ell\in [S_n(i)]}$, the relabelling of vertex $i$ in the RRT is equal to the first step directly after this streak when $h_{i,j}=1$, and the depth equals the total number of steps $j$ for which $h_{i,j}=1$.
	
	We are interested in the behaviour of the degree, depth, and label of the vertices $1,\ldots, k$ for \emph{any} fixed $k\in\N$. While these quantities are easily expressed in terms of the selection sets $(\S_n(i))_{i\in[k]}$ and the associated coin flips, as in~\eqref{eq:quantdescr}, considering $k$ vertices provides some additional difficulties in terms of correlations between the selection sets of these $k$ vertices. The main issue is the following: whenever two distinct vertices $i,i'\in[k]$ are both selected \emph{at the same step}, say step $\lambda_{i,i'}$, there is a dependence between the outcome of the associated coin flip of vertices $i$ and $i'$. Namely, $h_{i,\lambda_{i,i'}}=1-h_{j,\lambda_{i,i'}}$. Furthermore, for any step $2\leq j<\lambda_{i,j}$, we know that $h_{i,j}=h_{i',j}$. As these correlations between the vertices $1,\ldots, k$ are difficult to handle, we define  
	\be\label{eq:tauk}
	\tau_k:=\max\{2\leq j\leq n: s_{i,j}=s_{i',j}=1\text{ for distinct }i,i'\in[k]\}.
	\ee 
	Since the trees in the Kingman $n$-coalescent are ordered based on their smallest-labelled vertex, $\tau_k$ is the first step at which two vertices $i,i'\in[k]$ are both selected (in the sense that the root of the tree they belong to is selected), and thus up to step $\tau_k$ the vertices $1,\ldots, k$ are contained in disjoint trees. As a result, this implies that the sets $[\tau_k+1,n]\cap \S_n(1), \ldots, [\tau_k+1,n]\cap \S_n(k)$ are disjoint, and since the associated coin flips of these disjoint sets are independent, the evolutions of the degree, depth, and label of vertices $1,\ldots,k$, up to step $\tau_k$ are independent. This helps to avoid correlations and simplifies the analysis. Eslava (implicitly) shows in the proof of~\cite[Lemma $3.2$]{Esl16} that the sequence $(\tau_k)_{n\in\N}$ is a tight sequence of random variables. As a result, for any integer-valued sequence $(t_n)_{n\in\N}$ which diverges to infinity as $n\to\infty$, we know that $\P{\tau_k<t_n}=1-o(1)$. This justifies, for $t_n\leq n$, the definition of the sets, for each $\inn$,
	\be \ba \label{eq:omega}
	\Omega_1:=\{t_n,\ldots, n\},\quad \S_{n,1}(i):=\{j\in\Omega_1: s_{i,j}=1\},\quad \CH_{n,1}(i):=\{j\in\Omega_1: h_{i,j}=1\},
	\ea \ee 
	and we let $S_{n,1}(i):=|\S_{n,1}(i)|$ and $h_{n,1}(i):=|\CH_{n,1}(i)|, h_{n,2}(i):=h_n(i)-h_{n,1}(i)$. We refer to the sets $(\S_{n,1}(i))_{\inn}$ as the \emph{truncated selection sets}, to $h_{n,1}(i)$ as the \emph{truncated depth} of vertex $i$, and to $(t_n)_{n\in\N}$ as the \emph{truncation sequence}. Though $\S_{n,1}(i)$, $h_{n,1}(i),h_{n,2}(i)$ depend on $t_n$, we omit this in their notation for ease of writing. The truncated depth $h_{n,1}(i)$ and $h_{n,2}(i)$ can be described similar to $h_n(i)$ in~\eqref{eq:quantdescr}, as 
	\be 
	h_{n,1}(i)=\sum_{j\in \S_{n,1}(i)}\!\! h_{i,j}=\sum_{j=t_n}^n h_{i,j}, \qquad h_{n,2}(i)=\sum_{j\in \S_n(i)\backslash \S_{n,1}}\!\! h_{i,j}=\sum_{j=2}^{t_n-1}h_{i,j}=h_n(i)-h_{n,1}(i).
	\ee 
	
	The following lemma uses~\eqref{eq:quantdescr} to provide a description of the relation between the joint distribution of $\d_n(1), h_{n,1}(1)$ and $\ell_n(1)$ and the truncated selection set $\S_{n,1}(1)$. Since the vertices are exchangeable, as follows from Corollary~\ref{cor:unifnodes}, the lemma also holds for any vertex $\inn$.
	
	\begin{lemma}\label{lemma:probonevert}
		Let $G\sim \mathrm{Geo}(1/2)$ be independent from $\S_n(1)$. Then $\d_n(1)\overset d= \min\{G,S_n(1)\}$. Moreover, fix $h,d\in\N_0$ and consider a truncation sequence $(t_n)_{n,\in\N}$ such that $t_n\leq n$ for all $n\in\N$. Let $\ell\in\Omega_1$, $J\subseteq\Omega_1$, and let $X_{n,\ell,1}\sim\mathrm{Bin}(|[\ell,n]\cap J|-d,1/2)$ and\\ $X_{n,\ell,2}\sim\mathrm{Bin}(|[t_n,\ell-1]\cap J|,1/2)$ be two independent binomial random variables $($\!where we set $X_{n,\ell,1}=0, X_{n,\ell,2}=0$ when $|[\ell,n]\cap J|-d\leq 0,|[t_n,\ell-1]\cap J|=0$, respectively$)$. Then, 
		\be \ba\label{eq:geql}
		\mathbb P{}&(h_{n,1}(1)\leq h, \ell_n(1)\geq \ell, \d_n(1)\geq d\,|\, \S_{n,1}(1)=J)\\
		&=2^{-d}\ind_{\{|[\ell,n]\cap J|\geq d+1\}}\P{X_{n,\ell,1}+X_{n,\ell,2}\leq h,X_{n,\ell,1}\geq 1}.
		\ea\ee 
		Furthermore, 
		\be \ba \label{eq:eql}
		\mathbb P{}&(h_{n,1}(1)\leq h, \ell_n(1)= \ell, \d_n(1)\leq d\,|\, \S_{n,1}(1)=J)\\
		&=\ind_{\{|[\ell+1,n]\cap J|\leq d\}}\ind_{\{\ell\in J\}}2^{-(|[\ell+1,n]\cap J|+1)}\P{X_{n,\ell,2}\leq h-1}.
		\ea \ee 
	\end{lemma}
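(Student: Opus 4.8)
The plan is to reduce all three claims to a single probabilistic input --- that conditionally on the (truncated) selection set of vertex $1$ the associated coin flips are i.i.d.\ fair --- and then to read $d_n(1),h_{n,1}(1),\ell_n(1)$ off the deterministic description in~\eqref{eq:quantdescr}. First I would record that, given the entire sequence of tree-selections $(\{a_i,b_i\})_{1<i\le n}$, each $h_{1,j}$ with $j\in\S_n(1)$ equals either $\xi_j$ or $1-\xi_j$, so that $(h_{1,j})_{j\in\S_n(1)}$ is an i.i.d.\ $\mathrm{Bernoulli}(1/2)$ family, since $(\xi_i)_{1<i\le n}$ is i.i.d.\ fair and independent of the selections. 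As the selection sets $\S_n(1)$ and $\S_{n,1}(1)$ are measurable with respect to the selections, this remains true conditionally on $\{S_n(1)=s\}$ or on $\{\S_{n,1}(1)=J\}$; everything after this is deterministic bookkeeping on the ordered realisation of $\S_n(1)$.

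For the first identity, conditioning on $S_n(1)=s$ and using that the top $s$ flips along $\S_n(1)$ are i.i.d.\ fair, $d_n(1)$ is the length of the initial run of zeros among $s$ i.i.d.\ fair coins, which equals $\min\{G,s\}$ for $G\sim\mathrm{Geo}(1/2)$ on $\N_0$; averaging over $s$ with $G$ taken independent of $S_n(1)$ yields $d_n(1)\overset d=\min\{G,S_n(1)\}$.

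For~\eqref{eq:geql} I would condition on $\{\S_{n,1}(1)=J\}$ and use that the elements of $\S_n(1)$ lying in $\Omega_1$ are exactly $J$ and are the \emph{largest} elements of $\S_n(1)$. Thus $h_{n,1}(1)=\sum_{j\in J}h_{1,j}$; the event $\{\ell_n(1)\ge\ell\}$ is the event that $h_{1,j}=1$ for some $j\in J\cap[\ell,n]$; and, when $|J|\ge d$, $\{d_n(1)\ge d\}$ is the event that $h_{1,j}=0$ for the $d$ largest elements $j$ of $J$. The key point is that $\{d_n(1)\ge d\}\cap\{\ell_n(1)\ge\ell\}$ is empty unless $|[\ell,n]\cap J|\ge d+1$: if at most $d$ elements of $J$ exceed $\ell$ (in particular if $|J|<d$), all such elements lie among the $d$ largest of $J$ and hence vanish on $\{d_n(1)\ge d\}$, contradicting $\{\ell_n(1)\ge\ell\}$; this produces the indicator $\ind_{\{|[\ell,n]\cap J|\ge d+1\}}$. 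On that event the $d$ largest elements of $J$ all lie in $[\ell,n]$; conditioning their flips to be $0$ costs a factor $2^{-d}$ and leaves the remaining flips i.i.d.\ fair, and partitioning $J$ minus its top $d$ elements into its part in $[\ell,n]$ (of size $|[\ell,n]\cap J|-d$) and its part in $[t_n,\ell-1]$ (of size $|[t_n,\ell-1]\cap J|$) realises $X_{n,\ell,1}$ as the sum of the flips on the former set and $X_{n,\ell,2}$ as the sum on the latter, with $h_{n,1}(1)=X_{n,\ell,1}+X_{n,\ell,2}$ and $\{\ell_n(1)\ge\ell\}=\{X_{n,\ell,1}\ge1\}$, giving~\eqref{eq:geql}.

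For~\eqref{eq:eql} I would again condition on $\{\S_{n,1}(1)=J\}$. The event $\{\ell_n(1)=\ell\}$ forces $\ell\in J$ (since $\ell\in\Omega_1$), $h_{1,\ell}=1$, and $h_{1,j}=0$ for every $j\in[\ell+1,n]\cap J$; on this event the initial run of zeros of $(h_{1,j})$ along $\S_n(1)$ consists precisely of the elements of $J$ above $\ell$, so $d_n(1)=|[\ell+1,n]\cap J|$ is determined by $J$ alone, producing the factor $\ind_{\{|[\ell+1,n]\cap J|\le d\}}$. Prescribing $h_{1,\ell}=1$ together with $h_{1,j}=0$ on $[\ell+1,n]\cap J$ has probability $2^{-|[\ell+1,n]\cap J|-1}$, and on that event $h_{n,1}(1)=1+\sum_{j\in[t_n,\ell-1]\cap J}h_{1,j}=1+X_{n,\ell,2}$, so $\{h_{n,1}(1)\le h\}=\{X_{n,\ell,2}\le h-1\}$; multiplying the three independent contributions yields~\eqref{eq:eql}. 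I expect the main obstacle to be purely this bookkeeping --- translating the three events into statements about the \emph{ordered} flips along $\S_n(1)$ and pinning down the exact indicator conditions, especially in the degenerate cases ($|J|<d$, $\ell=\max\S_n(1)$, or empty index sets) --- since the probabilistic content is exhausted by the conditional i.i.d.\ fairness of the flips.
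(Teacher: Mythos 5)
Your proof is correct and takes essentially the same approach as the paper: both condition on $\S_{n,1}(1)=J$, use that the flips $(h_{1,j})_{j\in J}$ are i.i.d.\ fair and independent of the selection set, and then translate the three events via~\eqref{eq:quantdescr} into statements about the ordered flips on $J$, extracting the indicator conditions by noting when the degree and label events are incompatible. Your version is slightly cleaner in that it isolates the conditional i.i.d.-fairness as the single probabilistic input before the combinatorial bookkeeping, but the argument is the same.
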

	
	\begin{remark}\label{rem:nolab}
		In the case $\ell=1$, the result in~\eqref{eq:geql} simplifies to 
		\be 
		\P{h_n(1)\leq h, \d_n(1)\geq d\,|\, \S_{n,1}(1)=J}=2^{-d}\ind_{\{|J|\geq d\}}\P{X_n\leq h},
		\ee 
		where $X_n\sim \text{Bin}(|J|-d,1/2)$ and we set $X_n=0$ when $|J|-d\leq 0$. The proof follows the same approach as the proof of~\eqref{eq:geql} and is hence omitted.
	\end{remark} 
	\begin{remark}
		The constraint $\ell\geq t_n$ ensures that the events $\ell_n(1)\geq \ell$ and $\ell_n(1)=\ell$, as in~\eqref{eq:geql} and~\eqref{eq:eql}, respectively, can be determined by step $t_n$ of the Kingman $n$-coalescent. In what follows, we let $t_n$ grow sufficiently slow so that this constraint is satisfied for any choice of $\ell$ that is of interest.
	\end{remark} 
	
	\begin{proof}
		Let us start by proving~\eqref{eq:geql}. We define $\mathcal E_n:=\{h_{n,1}(1)\leq h, \ell_n(1)\geq \ell, \d_n(1)\geq d\}$. If we condition on the event $\{\S_{n,1}(1)=J\}$ for some set $J\subseteq \Omega_1$, then we can express the occurrence and probability of the event $\mathcal E_n$ in terms of $J$: 
		\begin{enumerate}
			\item[$(i)$] Conditionally on $\{\S_{n,1}(1)=J\}$, $\mathcal E_n$ can only occur if $|[\ell,n]\cap J|\geq d+1$ by the first and last line of~\eqref{eq:quantdescr}: 
			\begin{enumerate}
				\item[$(a)$] By the first line of~\eqref{eq:quantdescr}, the degree of vertex $i$ is at least $d$ when a streak $h_{1,j_{1,1}}=\ldots=h_{1,j_{1,d}}=0$ occurs, where we recall that $\S_n(1)=\{j_{1,1},\ldots, j_{1,S_n(1)}\}$ (and, similarly, $\S_{n,1}(1)=\{j_{1,1},\ldots, j_{1,S_{n,1}(1)}\}$). This can only happen when vertex $1$ is selected at at least $d$ steps, so $S_n(1)\geq d$, and the coin flips associated with the first $d$ of these steps need to be heads.
				\item[$(b)$] After this streak, vertex $1$ needs to be selected at least once more, but not later than step $\ell$. Moreover, the associated coin flip at this step has to be tails to ensure that the label of vertex $1$ in the random recursive tree is at least $\ell$, by the last line of~\eqref{eq:quantdescr}. So, combined with $(a)$, $J$ needs to contain at least $d+1$ elements that are at least $\ell$, i.e.\ $|[\ell,n]\cap J|\geq d+1$. Given this, we then require the first $d$ associated coin flips to favour vertex $1$ and the remaining $|[\ell,n]\cap J|-d$ coin flips to not favour vertex $1$ at least once, i.e.\  $X_{n,\ell,1}\geq 1$, to obtain a degree at least $d$ and a label at least $\ell$.
			\end{enumerate}
			\item[$(ii)$] The required streak of $d$ coin flips favouring vertex $1$ occurs with probability $2^{-d}$, and is independent from everything else which occurs afterwards (in particular, what occurs in steps $(i)_{(b)}$ and $(iii)$). Moreover, as the coin flips are independent of the selection set, the degree of $1$ is determined by the length of the first streak of coin flips that favour $1$. So, $\d_n(1)\overset d=\min\{G,S_n(1)\}$. 
			\item[$(iii)$]  After the first streak of $d$ coin flips that favour vertex $1$, the number of remaining coin flips which do not favour vertex $1$, associated to the selection set $J$, should be at most $h$. That is, $X_{n,\ell,1}+X_{n,\ell,2}\leq h$. 
		\end{enumerate}
		Combining all of the above, we can then write,
		\be \ba
		\mathbb P({}&\mathcal E_n\,|\, \S_{n,1}(1)=J)\\
		&=\ind_{\{|[\ell,n]\cap J|\geq d+1\}}\P{\mathcal E_n\,|\, \S_{n,1}(1)=J}\qquad  &(i)&\\
		&=2^{-d}\ind_{\{|[\ell,n]\cap J|\geq d+1\}}\P{h_n(1)\leq h,\ell_n(1)\geq \ell\,|\, \S_{n,1}(1)=J,\d_n(1)\geq d} &(ii)&\\
		&=2^{-d}\ind_{\{|[\ell,n]\cap J|\geq d+1\}}\P{X_{n,\ell,1}+X_{n,\ell,2}\leq h, X_{n,\ell,1}\geq 1}, &(i)_{(b)}+(iii)&
		\ea \ee 
		where we remark that we can omit the conditioning due to the fact that the coin flips are independent of everything else.
		
		We now prove~\eqref{eq:eql}. Let us set $ \wt \CE_n:=\{h_n(1)\leq h, \ell_n(1)=\ell, \d_n(1)\leq d\}$. Again, we express the occurrence and the probability of the event $\wt \CE_n$ in terms of $J$:
		\begin{enumerate}
			\item[$(i)$] $\ell_n(1)=\ell$ and $\d_n(1)\leq d$ can only occur together if the following two things occur:
			\begin{enumerate}
				\item[$(a)$] Vertex $1$ is selected at most $d$ times in steps $n$ through $\ell+1$, and all associated coin flips favour vertex $1$. The latter occurs with probability $2^{-|J\cap [\ell+1,n]|}$.
				\item[$(b)$] Vertex $1$ is selected at step $\ell$ and is not favoured by the associated coin flip. The latter occurs with probability $1/2$.
			\end{enumerate}
			Indeed, if $(a)$ does not occur then either the degree or the label of vertex $1$ (in the random recursive tree) is too large. If $(b)$ does not occur, then the label of vertex $1$ (in the random recursive tree) is not equal to $\ell$.
			\item[$(ii)$]  In steps $\ell-1$ through $2$, the number of coin flips which do not favour vertex $1$, associated to the selection set $J$, is at most $h-1$ (since the height of $1$ equals one after step $\ell$). That is, $X_{n,\ell,2}\leq h-1$.
		\end{enumerate}
		Combining this, we can write
		\be \ba 
		\mathbb P(\wt \CE_n\,|\, \S_{n,1}(1)=J)=&\ind_{\{| [\ell+1,n]\cap J|\leq d\}}\ind_{\{\ell\in J\}}\mathbb P(\wt \CE_n\,|\, \S_{n,1}(1)=J) &&\hspace{-6pt}(i)_{(a)}+(i)_{(b)}\\
		=&\ind_{\{|[\ell+1,n]\cap J|\leq d\}}\ind_{\{\ell\in J\}}2^{-(|[\ell+1,n]\cap J|+1)}   &&\hspace{-6pt}(i)_{(a)}+(i)_{(b)}\\
		&\times \P{h_n(1)\leq h-1\,|\,\ell_n(1)=\ell, \S_{n,1}(1)=J}\\
		=&\ind_{\{|[\ell+1,n]\cap J|\leq d\}}\ind_{\{\ell\in J\}}2^{-(|[\ell+1,n]\cap J|+1)}\P{X_{n,\ell,2}\leq h-1}. &&\hspace{-6pt}(ii)
		\ea\ee 
		We remark that in the last step, as in the proof of~\eqref{eq:geql}, we can omit the conditional event $\{\ell_n(1)=\ell,\S_n(1)=J\}$, as the coin flips are independent of everything else. Moreover, in the second step we can omit the event $\{\d_n(1)\leq d\}$, as the occurrence of $\{h_n(1)\leq h-1\}$, conditionally on $\{\ell_n(1)=\ell\}$ is independent of $\{\d_n(1)\leq d\}$. This concludes the proof.
	\end{proof}
	
	We now extend this result to multiple vertices, which we can do with relative ease as long as the truncated selection sets of the vertices $1,\ldots, k$ are disjoint. For ease of writing, we define $\overline S_{n,1}:=(\S_{n,1}(i))_{i\in[k]}$ and $\bar J:=(J_i)_{i\in[k]}$ (where $J_i\subseteq \Omega_1$ for each $i\in[k]$).
	
	\begin{lemma}\label{lemma:probmultvert}
		Fix $k\in\N$ and consider a truncation sequence $(t_n)_{n\in\N}$ such that $t_n\leq n$ for all $n\in\N$. Let $h_i,d_i\in\N_0, i\in[k], (J_i)_{i\in[k]}\in \Omega_1^k$ such that the $(J_i)_{i\in[k]}$ are pairwise disjoint. Then, 
		\be\ba
		\mathbb P{}&(h_{n,1}(i)\leq h_i, \d_n(i)\geq d_i,i\in[k]\,|\,\overline \S_{n,1}=\bar J)=\prod_{i=1}^k \mathbb P(h_{n,1}(i)\leq h_i, \d_n(i)\geq d_i\,|\,\S_{n,1}(i)=J_i).
		\ea \ee 
		If, additionally, we let $\ell_i\in\Omega_1$ for all $i\in[k]$,
		\be \ba\label{eq:multgeql}
		\mathbb P{}&(h_{n,1}(i)\leq h_i, \ell_n(i)\geq \ell_i, \d_n(i)\geq d_i,i\in[k]\,|\,\overline \S_{n,1}=\bar J)\\
		&=\prod_{i=1}^k \mathbb P(h_{n,1}(i)\leq h_i, \ell_n(i)\geq \ell_i, \d_n(i)\geq d_i\,|\,\S_{n,1}(i)=J_i),
		\ea \ee
		and 
		\be \ba
		\mathbb P{}&(h_{n,1}(i)\leq h_i, \ell_n(i)= \ell_i, \d_n(i)\geq d_i,i\in[k]\,|\,\overline \S_{n,1}=\bar J)\\
		&=\prod_{i=1}^k \mathbb P((h_{n,1}(i)\leq h_i, \ell_n(i)= \ell_i, \d_n(i)\geq d_i,i\in[k]\,|\,\S_{n,1}(i)=J_i).
		\ea \ee 
	\end{lemma}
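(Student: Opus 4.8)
The plan is to reduce the three displayed identities to a single conditional-independence statement: once we condition on $\overline{\S}_{n,1}=\bar J$ with the $J_i$ pairwise disjoint, the ``local'' events attached to the distinct vertices $1,\dots,k$ become independent, and each of them has precisely the one-vertex conditional probability supplied by Lemma~\ref{lemma:probonevert}. As the three events in the statement, namely $\{h_{n,1}(i)\le h_i,\,d_n(i)\ge d_i\}$, $\{h_{n,1}(i)\le h_i,\,\ell_n(i)\ge \ell_i,\,d_n(i)\ge d_i\}$ and $\{h_{n,1}(i)\le h_i,\,\ell_n(i)=\ell_i,\,d_n(i)\ge d_i\}$, are all of this ``local'' type, all three identities follow from one argument, which I would carry out for a generic event $E_i$ of this kind.

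First I recall the coin-flip mechanism: for each step $j\in[2,n]$, conditionally on the entire selection configuration $(\{a_j,b_j\})_{2\le j\le n}$, the indicator $h_{i,j}$ (when $s_{i,j}=1$) is a deterministic and bijective function of the single variable $\xi_j$, while the $(\xi_j)_{2\le j\le n}$ are i.i.d.\ $\mathrm{Bernoulli}(1/2)$ and independent of the selection configuration. Hence, conditionally on any selection configuration compatible with $\overline{\S}_{n,1}=\bar J$, the family $\bigl((h_{i,j})_{j\in \S_{n,1}(i)}\bigr)_{i\in[k]}$ splits into $k$ mutually independent blocks, the $i$-th block being a vector of $|J_i|$ i.i.d.\ $\mathrm{Bernoulli}(1/2)$ entries; independence across $i$ uses exactly that the $J_i=\S_{n,1}(i)$ are pairwise disjoint, so that distinct and hence independent coin flips are involved. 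Averaging over the selection configuration within $\{\overline{\S}_{n,1}=\bar J\}$ preserves this product structure, and in particular the conditional law of the $i$-th block is the same under $\{\overline{\S}_{n,1}=\bar J\}$ as under $\{\S_{n,1}(i)=J_i\}$.

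Next I would check that each event $E_i$ is, on the relevant part of the space, measurable with respect to the $i$-th block $(h_{i,j})_{j\in \S_{n,1}(i)}$ alone. For the events involving $\ell_n(i)\ge \ell_i$ or $\ell_n(i)=\ell_i$ with $\ell_i\in\Omega_1$, this is exactly what the case analysis in the proof of Lemma~\ref{lemma:probonevert} provides: the constraint $\ell_i\ge t_n$ together with the indicators $\ind_{\{|[\ell_i,n]\cap J_i|\ge d_i+1\}}$, resp.\ $\ind_{\{|[\ell_i+1,n]\cap J_i|\le d_i\}}\ind_{\{\ell_i\in J_i\}}$, force the streak that fixes $d_n(i)$, the step that fixes $\ell_n(i)$ and the count $h_{n,1}(i)$ to all be determined by the coin flips at steps $\ge t_n$, i.e.\ within $\S_{n,1}(i)$; for the label-free event the same holds on $\{|J_i|\ge d_i\}$, which is all that is needed. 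Granting this, $E_1,\dots,E_k$ are functions of $k$ independent blocks, hence independent given $\{\overline{\S}_{n,1}=\bar J\}$, so
\[
\mathbb{P}\Bigl(\bigcap_{i\in[k]} E_i \,\Bigm|\, \overline{\S}_{n,1}=\bar J\Bigr)=\prod_{i\in[k]}\mathbb{P}\bigl(E_i \,\bigm|\, \overline{\S}_{n,1}=\bar J\bigr)=\prod_{i\in[k]}\mathbb{P}\bigl(E_i \,\bigm|\, \S_{n,1}(i)=J_i\bigr),
\]
where the last equality uses that $E_i$ depends only on the $i$-th block and that the conditional law of that block is unchanged. Specialising $E_i$ to each of the three events in turn, and invoking Lemma~\ref{lemma:probonevert} (resp.\ Remark~\ref{rem:nolab}) to make the factors explicit, yields the three identities.

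The main --- and essentially the only --- delicate point is the measurability claim of the previous paragraph: a priori the degree event $\{d_n(i)\ge d_i\}$ can involve selection steps below $t_n$, and thus, through the pre-merge steps $j\le \tau_k$, randomness shared by several vertices of $[k]$, so one must verify that on the events being conditioned on it is in fact a function of the truncated data. This is settled by re-using the bookkeeping already done in the proof of Lemma~\ref{lemma:probonevert} --- the indicators there are precisely what excludes the offending configurations --- after which the conditional independence and the factorisation are routine.
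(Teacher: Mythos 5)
Your proof is essentially the same as the paper's: the paper also proves these identities by observing that, conditionally on $\overline\S_{n,1}=\bar J$ with $\bar J$ pairwise disjoint, each of the $k$ events is determined by the coin flips $(h_{i,m})_{m\in J_i}$ associated to the truncated selection set $J_i$ (the constraint $\ell_i\ge t_n$ being what localises the degree and label events to the $i$-th block), and that disjointness of the $J_i$ together with independence of the coin flips then gives the factorisation. Your explicit remarks on averaging over the selection configuration and on the one delicate measurability point (whether $\{d_n(i)\ge d_i\}$ could reach below $t_n$) are a somewhat more careful spelling-out of what the paper treats tersely, but the underlying argument is the same; the paper additionally handles the label-free identity by citing the corresponding lemma from Eslava rather than re-deriving it.
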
 
	
	\begin{proof}
		The first result follows from~\cite[Lemma $3.1$]{Esl16}. We prove~\eqref{eq:multgeql}, the proof of the last result follows an analogous approach.
		
		The proof is similar to that of~\cite[Lemma $3.1$]{Esl16}. Let us rewrite $J_i=\{j_{i,1},\ldots, j_{i,|J_i|}\}$, where $j_{i,1}>\ldots>j_{i,J_i}$ for each $i\in[k]$. Conditionally on $\{\overline \S_{n,1}=\bar J\}$, we have that for each $i\in[k]$, 
		\be 
		h_{n,1}(i)=\sum_{m=1}^{|J_i|}h_{i,j_{i,m}}. 
		\ee 
		Also, the event $\{\d_n(i)\geq d_i\}$ holds if and only if $|J_i|\geq d_i$ and $h_{i,j_{i,m}}=0$ for all $m\in[d_i]$, and the event $\{\ell_n(i)\geq \ell_i\}$ holds if and only if $\max\{m\in J_i: h_{i,m}=0\}\geq \ell_i$. As $\ell_i\geq t_n$, it follows that the event $\{h_{n,1}(i)\leq h_i, \ell_n(i)\geq \ell_i, \d_n(i)\geq d_i, i\in[k]\}$, conditionally on $\overline \S_{n,1}=\bar J$, depends solely on $(h_{i,m})_{m\in J_i, i\in[k]}$ and $J_i$. Since the sets $(J_i)_{i\in[k]}$ are pairwise disjoint, the occurrence of the events $\{h_{n,1}(i)\leq h_i, \ell_n(i)\geq \ell_i, \d_n(i)\geq d_i\}$, for each $i\in[k]$, depend on disjoint sets of random variables. Moreover, since the random variables $h_{i,m}$ for different values of $m$ are determined by independent coin flips, we have that the events $\{h_{n,1}(i)\leq h_i, \ell_n(i)\geq \ell_i, \d_n(i)\geq d_i\}$, for each $i\in[k]$, depend on disjoint sets of independent random variables, from which~\eqref{eq:multgeql} follows. A similar reasoning proves the final result. 
	\end{proof}
	
	To end the first part of this section, we recall a result from Addario-Berry and Eslava on the degree of vertices $1,\ldots, k$ in the Kingman $n$-coalescent.
	
	\begin{proposition}[Proposition $4.2$, \cite{AddEsl18}]\label{prop:deg}
		Fix $k\in\N, c\in(0,2)$. There exists $\beta=\beta(c,k)>0$ such that uniformly over integers $(d_i)_{i\in[k]}\in[0,c\log n]^k$,
		\be 
		\P{\d_n(i)\geq d_i, i\in[k]}=2^{-\sum_{i=1}^k d_i}(1+o(n^{-\beta})).
		\ee  
	\end{proposition}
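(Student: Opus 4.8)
The plan is to condition on the ``selection structure'' of the Kingman $n$-coalescent and thereby reduce the statement to two elementary tail estimates. Let $\mathcal{A}$ be the $\sigma$-algebra generated by the merge choices $(\{a_j,b_j\})_{2\le j\le n}$, so that conditionally on $\mathcal{A}$ the direction coins $(\xi_j)_{2\le j\le n}$ are still i.i.d.\ fair coins. The selection set $\S_n(i)$ of each vertex $i$ is $\mathcal{A}$-measurable, and so is, for any two distinct $i,i'\in[k]$, the step $M_{i,i'}$ at which the trees of $i$ and $i'$ merge. By~\eqref{eq:quantdescr}, $\{d_n(i)\ge d_i\}$ is the event that $S_n(i)\ge d_i$ and that the coin flips associated with the first $d_i$ selections of $i$ (read in decreasing order of step index) all equal $0$; moreover, conditionally on $\mathcal{A}$, the variables $h_{i,j}$ with $i\in[k]$ and $j\in\S_n(i)$ are, across distinct steps $j$, independent fair coins, the only within-step coupling being $h_{i,j}=h_{i',j}$ when $i,i'$ lie in a common tree at step $j$, and $h_{i,j}=1-h_{i',j}$ when step $j$ merges their two trees.

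First I would prove the exact identity
\[
\P{d_n(i)\ge d_i,\ i\in[k]}=2^{-\sum_{i=1}^k d_i}\,\P{\mathcal{E}},
\]
where $\mathcal{E}$ is the $\mathcal{A}$-measurable event that $S_n(i)\ge d_i$ for every $i$ and that no \emph{conflict} occurs, a conflict meaning that there are distinct $i,i'\in[k]$ with $M_{i,i'}$ lying both among the first $d_i$ selections of $i$ and among the first $d_{i'}$ selections of $i'$. To see this, condition on $\mathcal{A}$: if some $S_n(i)<d_i$ the conditional probability is $0$; if a conflict occurs then the requirements $h_{i,M_{i,i'}}=0$ and $h_{i',M_{i,i'}}=0$ are contradictory, so it is again $0$; and if neither happens, the $\sum_i d_i$ constraints ``$h=0$'' involve $\sum_i d_i$ \emph{distinct} independent coins, giving conditional probability $2^{-\sum_i d_i}$. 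The combinatorial point behind the last claim is that two streaks can share a step $j$ only if $i$ and $i'$ are both selected at $j$, hence only at (or below) their merge step; but since the streaks are prefixes of the selection sequences in decreasing step order, a shared step forces $M_{i,i'}$ itself into both streaks, i.e.\ a conflict. Integrating over $\mathcal{A}$ gives the identity.

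It remains to bound $\P{\mathcal{E}^c}$, uniformly over integers $(d_i)_{i\in[k]}\in[0,c\log n]^k$, by $O(n^{-\beta})$ for a suitable $\beta=\beta(c,k)>0$. I would split $\mathcal{E}^c$ into $\{\exists\,i:S_n(i)<d_i\}$ and $\{\text{conflict}\}$. For the first, $S_n(i)=\sum_{j=2}^n s_{i,j}$ is a sum of independent $\mathrm{Bernoulli}(2/j)$ variables with mean $2\log n+O(1)$, so a Chernoff bound gives $\P{S_n(i)<d_i}\le\P{S_n(i)<c\log n}\le n^{-\beta_1(c)}$ with $\beta_1(c)=2-c+c\log(c/2)>0$ for $c<2$. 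For the conflict, take a union bound over the $\binom{k}{2}$ pairs; since a conflict for $(i,i')$ forces $|\S_n(i)\cap[M_{i,i'},n]|\le d_i\le c\log n$, the merge step $M_{i,i'}$ has to be large. Fix $\delta\in(0,1-c/2)$ and set $\theta:=1-c/2-\delta>0$, so that
\[
\P{\text{conflict for }(i,i')}\le\P{M_{i,i'}\ge n^\theta}+\sum_{m<n^\theta}\P{M_{i,i'}=m,\ |\S_n(i)\cap(m,n]|\le c\log n}.
\]
The first term is $O(n^{-\theta})$ by the elementary two-lineage non-coalescence bound $\P{M_{i,i'}\ge m}=O(1/m)$; in the sum, conditioning on the steps after $m$ and using $\P{M_{i,i'}=m\,\vert\,\text{steps}>m}\le 1/\binom{m}{2}$ bounds the $m$-th term by $\tfrac{2}{m(m-1)}\,\P{|\S_n(i)\cap(m,n]|\le c\log n}\le\tfrac{2}{m(m-1)}\,n^{-\beta_4(\delta)}$, the last inequality being again a Chernoff bound since for $m<n^\theta$ the sum has mean at least $(c+2\delta)\log n$; as $\sum_m 2/(m(m-1))\le 2$ this gives a total of $O(n^{-\beta_4(\delta)})$. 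Hence $\P{\mathcal{E}^c}=O(n^{-\beta})$ with $\beta:=\tfrac12\min\!\big(\beta_1(c),\theta,\beta_4(\delta)\big)>0$, and combining with the identity yields $\P{d_n(i)\ge d_i,\ i\in[k]}=2^{-\sum_i d_i}\big(1-O(n^{-\beta})\big)=2^{-\sum_i d_i}\big(1+o(n^{-\beta/2})\big)$, which is the claim up to relabelling $\beta$.

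The step I expect to demand the most care is the exact identity: one must be precise about the conditional dependence of the direction coins along the $k$ tracked vertices---that it is exactly ``equal within a common tree, complementary at a merge'' and nothing more---and therefore that no loss or gain among the $\sum_i d_i$ constraints can occur except through a conflict, which annihilates the probability. Once this is in place the rest is routine; the only genuine choice is the cutoff exponent $\theta=1-c/2-\delta$, which makes both halves of the conflict bound polynomially small and is available precisely because $c<2$.
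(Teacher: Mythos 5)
Your proof is correct. The exact identity
\[
\P{d_n(i)\ge d_i,\ i\in[k]}=2^{-\sum_{i=1}^k d_i}\,\P{\mathcal{E}}
\]
is well justified: conditionally on the merge structure $\mathcal{A}$, the direction coins remain i.i.d.\ fair, each $h_{i,j}$ is a deterministic (Bernoulli-$1/2$) function of the single coin $\xi_j$, and the crucial combinatorial observation---that two streaks can share a step $j$ only if $j\le M_{i,i'}$, and that any shared $j<M_{i,i'}$ drags $M_{i,i'}$ itself into both streaks (since streaks are decreasing-step prefixes and $M_{i,i'}>j$ is a selection step for both vertices)---is exactly right. Thus ``no conflict'' forces disjoint streaks over distinct independent coins, and a conflict at $M_{i,i'}$ is a contradiction because $h_{i,M_{i,i'}}=1-h_{i',M_{i,i'}}$. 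The tail estimates are also sound: the lower-deviation Chernoff bound with exponent $2-c+c\log(c/2)>0$ for $c<2$; the telescoping computation behind $\P{M_{i,i'}\geq m}=O(1/m)$; the one-step bound $\P{M_{i,i'}=m\mid\mathcal{F}_{>m}}\leq 1/\binom{m}{2}$; and the choice $\theta=1-c/2-\delta>0$ making both parts of the conflict split polynomially small, uniformly over $(d_i)\in[0,c\log n]^k$.

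This statement is cited from Addario-Berry and Eslava~\cite{AddEsl18} and not reproved in the paper, so the interesting comparison is with the surrounding framework the paper does develop. The paper works throughout with the \emph{truncated} selection sets $\overline\S_{n,1}$ at level $t_n=\lceil(\log n)^2\rceil$ and the apparatus $\A$, $\B$, $\tau_k$ (Lemmas~\ref{lemma:BinA}--\ref{lemma:h2n}), precisely because this truncation is what lets the depth, label, and graph-distance analyses decouple. Your proof instead conditions on the full merge filtration $\mathcal{A}$, bypassing any truncation, and replaces the qualitative ``$\tau_k<t_n$ whp'' control by the quantitatively sharper ``no conflict'' event, which is the exact obstruction to the streak coins being distinct. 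For the degree-only statement this is cleaner: it yields the identity with $\P{\mathcal{E}}$ at no cost and produces an explicit $\beta$ directly from the two Chernoff exponents and $\theta$. What the paper's truncated route buys is reusability---$\A$, $\B$, $\tau_k$ must be built anyway for Propositions~\ref{prop:condprobmult} and~\ref{prop:condprobfixell}---but for the isolated estimate in Proposition~\ref{prop:deg} your argument is a valid and arguably more transparent derivation.

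One cosmetic note: the phrase ``the merge step $M_{i,i'}$ has to be large'' is a little ambiguous given that your split treats the case $M_{i,i'}\geq n^\theta$ by the non-coalescence bound and the case $M_{i,i'}<n^\theta$ by Chernoff; both tails are small but for different reasons, and it would read better to say simply that the two ranges are handled by the two estimates. This does not affect correctness.
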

	
	\subsection{Truncated selection sets} 
	
	As we have seen in the first part of this section, we can obtain explicit formulations for the probability of events related to the size of the degree, depth, and label of vertices $1,\ldots, k$ in the Kingman $n$-coalescent, under certain conditions on the truncated selection sets $\overline \S_{n,1}$. In this part of the section, we formalize these conditions and show that they are met with high probability. We also introduce some other properties of the truncated selection sets that are useful in the analysis that follows in Sections~\ref{sec:deg} through~\ref{sec:kingproof}.
	
	Recall $\Omega_1$ from~\eqref{eq:omega} and recall that we write $\overline\S_{n,1}=(\S_{n,1}(i))_{i\in[k]},\bar J=(J_i)_{i\in[k]}$. For $\delta\in(0,2)$ and $\bar d=(d_i)_{i\in[k]}\in \Z^k$, define
	\be \ba\label{eq:AB}
	\A&:=\{\bar J\in \Omega_1^k: \P{\overline \S_{n,1}=\bar J, \d_n(i)\geq d_i,i\in[k]}>0\},\\
	\B&:=\{\bar J\in \Omega_1^k: (J_1,\ldots, J_k) \text{ are pairwise disjoint and } |\,|J_i|-2\log n|\leq \delta \log n, i\in [k]\}.
	\ea\ee 
	$\A$ consists of all possible outcomes of the truncated selection sets that enable the event $\{\d_n(i)\geq d_i, i\in [k]\}$, and $\B$ consists of all truncated selection sets which enable the decoupling of the depth, label and degree of the vertices $1,\ldots, k$, as follows from Lemma~\ref{lemma:probmultvert}. 
	
	We now present some results related to the sets $\A$ and $\B$, which are based on several results from~\cite{Esl16}. Though we defined the truncated selection sets and the truncated depth in terms of a general truncation sequence $t_n$, it suffices to consider only the case $t_n=\lceil (\log n)^2\rceil$ in the following lemmas (as we will mostly use this choice for $t_n$ in what follows).
	
	\begin{lemma}[Lemma $3.1$, \cite{Esl16}]\label{lemma:BinA}
		Let $\delta\in(0,2)$ and let $t_n=\lceil (\log n)^2\rceil$. If $\bar d=(d_i)_{i\in[k]}\in\N_0^k$ satisfies $d_i<(2-\delta)\log n$ for all $i\in[k]$, then $\B\subseteq \A$.
	\end{lemma}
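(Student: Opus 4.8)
The plan is to show, for any fixed $k\in\N$ and any $\bar J=(J_i)_{i\in[k]}\in\B$, that $\bar J\in\A$; that is, to exhibit an event of strictly positive probability contained in $\{\overline\S_{n,1}=\bar J\}\cap\{d_n(i)\ge d_i,\ i\in[k]\}$. This mirrors Eslava's argument for~\cite[Lemma~3.1]{Esl16}. Since $k$ is fixed I would work with $n$ large enough that $t_n=\lceil(\log n)^2\rceil$ satisfies $t_n\le n$ and $t_n\ge k+2$; the few remaining $n$ either have $\B=\emptyset$ or need only a trivial modification of the construction below, so I would not dwell on them.

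First I would build a deterministic $n$-chain realising the prescribed truncated selection sets. The key point is that, because the $J_i$ are pairwise disjoint, for each $j\in\Omega_1$ at most one index $i\in[k]$ has $j\in J_i$. Processing the steps from $j=n$ down to $j=2$, I would maintain the invariant that vertices $1,\dots,k$ lie in $k$ distinct trees throughout $\Omega_1=\{t_n,\dots,n\}$: at a step $j\in J_i$, let $T^{(j)}(i)$ merge with an arbitrary tree of $F_j$ containing none of $1,\dots,k$; at a step $j\in\Omega_1\setminus\bigcup_iJ_i$, merge two such ``junk'' trees; and for $j<t_n$, merge arbitrarily so as to complete the chain. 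Under the invariant $F_j$ has exactly $j-k\ge t_n-k\ge 2$ junk trees for every $j\in\Omega_1$, so the prescribed merges are always available. Along this chain $s_{i,j}=\ind_{\{j\in J_i\}}$ for all $i\in[k]$ and $j\in\Omega_1$, hence $\S_{n,1}(i)=J_i$; and since each step's pair of merging trees is chosen uniformly and independently, this particular chain has probability $\prod_{j=2}^n\binom{j}{2}^{-1}>0$, a quantity independent of the coin flips $(\xi_j)_j$.

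Next I would pin down enough coin flips to force the degree lower bounds. By the first line of~\eqref{eq:quantdescr}, conditionally on the above chain $d_n(i)\ge d_i$ holds as soon as $S_n(i)\ge d_i$ and $h_{i,j}=0$ at the $d_i$ largest elements $j$ of $\S_n(i)$. Here is where the size condition of $\B$ enters: it gives $|J_i|\ge(2-\delta)\log n$, which together with the hypothesis $d_i<(2-\delta)\log n$ yields $S_n(i)\ge S_{n,1}(i)=|J_i|>d_i$; moreover every element of $\S_n(i)\setminus J_i$ is $<t_n$ and hence smaller than every element of $J_i$, so those $d_i$ largest elements of $\S_n(i)$ are the $d_i$ largest elements of $J_i$. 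At each such step $j$ the value $h_{i,j}\in\{0,1\}$ is a deterministic function of $\xi_j$, so I would fix $\xi_j$ to the value giving $h_{i,j}=0$; these steps are pairwise distinct over $i$ (disjointness once more), the relevant $\xi_j$ are mutually independent and independent of the pairs $\{a_j,b_j\}$, and there are $\sum_i d_i<\infty$ of them, so this event has probability $2^{-\sum_i d_i}$. Its intersection with the event of the previous paragraph has probability $\big(\prod_{j=2}^n\binom{j}{2}^{-1}\big)2^{-\sum_i d_i}>0$, and on it both $\overline\S_{n,1}=\bar J$ and $d_n(i)\ge d_i$ for all $i\in[k]$; hence $\bar J\in\A$.

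The step I expect to be the main obstacle is the first one — checking that a coalescent with exactly the prescribed truncated selection sets exists. The crux is that disjointness of the $J_i$ allows the $k$ marked vertices to be kept in separate trees across all of $\Omega_1$ while always leaving at least two junk trees to merge, which is precisely what the growth $t_n=\lceil(\log n)^2\rceil\gg k$ buys us. The size condition $\big|\,|J_i|-2\log n\,\big|\le\delta\log n$ plays no role in realisability and is needed only to guarantee $|J_i|>d_i$ in the degree step. Once $\P{\overline\S_{n,1}=\bar J}>0$ is in hand, an alternative to the coin-flip bookkeeping is to invoke the $\ell=1$ case of Lemma~\ref{lemma:probonevert} (Remark~\ref{rem:nolab}) together with Lemma~\ref{lemma:probmultvert} to conclude $\P{d_n(i)\ge d_i,\ i\in[k]\mid\overline\S_{n,1}=\bar J}=\prod_i2^{-d_i}>0$.
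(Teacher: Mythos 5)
The paper cites this lemma from~\cite{Esl16} without reproducing the argument, so there is no in-paper proof to compare against; I can only assess your proof on its own terms. It is correct, and it is the natural constructive argument one would expect. The two moving parts are exactly right: disjointness of the $J_i$ is what lets you run the coalescent on $\Omega_1$ so that $s_{i,j}=\ind_{\{j\in J_i\}}$ exactly, keeping $1,\dots,k$ in $k$ distinct trees and always retaining $j-k\ge t_n-k\ge 2$ junk trees to merge; and the size bound from $\B$ combined with $d_i<(2-\delta)\log n$ gives $|J_i|>d_i$ so that $J_i$ already contains the $d_i$ coalescent steps whose coin flips you need to fix, and these steps are distinct across $i$. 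Since the pair-selection variables $(\{a_j,b_j\})_j$ and the coin flips $(\xi_j)_j$ are independent, the two events you intersect are independent and the product $\bigl(\prod_{j=2}^n\binom{j}{2}^{-1}\bigr)2^{-\sum_i d_i}$ is indeed a positive lower bound on $\P{\overline\S_{n,1}=\bar J,\ d_n(i)\ge d_i,\ i\in[k]}$, which is what $\bar J\in\A$ requires. Your observation that the realisability step needs only disjointness and $t_n\gg k$, while the size condition of $\B$ enters only to guarantee $|J_i|>d_i$, is accurate and worth keeping explicit.

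One small caveat on the alternative you mention at the end: the first display of Lemma~\ref{lemma:probmultvert} is itself attributed in this paper to~\cite[Lemma~3.1]{Esl16}, the same source cited for the present lemma, so invoking it risks being circular depending on how Eslava's Lemma~3.1 is actually organised; your primary coin-flip argument is self-contained and should be the one relied on. It would also be cleaner to phrase the first paragraph as exhibiting a single admissible sequence of pair selections $(\{a_j,b_j\})_{2\le j\le n}$ (rather than a single ``chain'', which in the paper's terminology also encodes the edge directions), and then noting that the event $\{\overline\S_{n,1}=\bar J\}$ depends only on the pair selections; this is what you mean, but the wording ``this particular chain'' slightly blurs the point.
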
   
	
	We have already discussed that $\tau_k<t_n$ with high probability when the truncation sequence $t_n$ diverges as $n\to\infty$ infinity. The concentration of the size of $\S_{n,1}(i)$ around $2\log n$ for any $i\in[k]$ when $t_n=\lceil (\log n)^2\rceil$ (or, more generally, when $t_n=o(n)$, which follows from a direct application of Bernstein's inequality, see also~\cite[$(32)$]{Esl16} for a more formal statement) yields the following result:
	
	\begin{lemma}[Lemma $3.2$, \cite{Esl16}]\label{lemma:B}
		Fix an integer $k\in \N$ and $\delta\in(0,2)$ and let $t_n=\lceil (\log n)^2\rceil$. Then, 
		\be 
		\P{\overline \S_{n,1}\in \B}=1-o(1).
		\ee 
	\end{lemma}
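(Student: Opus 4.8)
The plan is to bound the probability of the complementary event $\{\overline{\S}_{n,1}\notin\B\}$ by a union over the two ways membership in $\B$ can fail: either the truncated selection sets $\S_{n,1}(1),\ldots,\S_{n,1}(k)$ are not pairwise disjoint, or some $|\,|\S_{n,1}(i)|-2\log n|>\delta\log n$. I would first observe that, by the union bound, it suffices to show each of these two events has probability $o(1)$ as $n\to\infty$, uniformly in the choice of $t_n=\lceil(\log n)^2\rceil$.

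For the disjointness, I would argue that $\S_{n,1}(i)\cap\S_{n,1}(j)\ne\emptyset$ for some distinct $i,j\in[k]$ precisely when two of the vertices $1,\ldots,k$ are simultaneously selected at some step $\geq t_n$, i.e.\ when $\tau_k\geq t_n$, with $\tau_k$ as defined in~\eqref{eq:tauk}. Since the excerpt already records that $(\tau_k)_{n\in\N}$ is a tight sequence of random variables (implicit in the proof of~\cite[Lemma~3.2]{Esl16}) and $t_n=\lceil(\log n)^2\rceil\to\infty$, we have $\P{\tau_k\geq t_n}=o(1)$. This disposes of the pairwise-disjointness clause. For the concentration clause, I would fix $i\in[k]$ and recall from the discussion preceding the lemma that $S_{n,1}(i)=\sum_{j=t_n}^n s_{i,j}$ is a sum of independent Bernoulli random variables with $\E{s_{i,j}}=2/j$, so that $\E{S_{n,1}(i)}=\sum_{j=t_n}^n 2/j = 2\log n - 2\log t_n + O(1/t_n) = 2\log n - 4\log\log n + o(1)$. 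In particular $|\E{S_{n,1}(i)}-2\log n|\leq 4\log\log n \leq \tfrac{\delta}{2}\log n$ for $n$ large. An application of Bernstein's inequality (exactly as in~\cite[(32)]{Esl16}) to the centered sum then gives $\P{|S_{n,1}(i)-\E{S_{n,1}(i)}|>\tfrac{\delta}{2}\log n}\leq \exp(-c_\delta\log n)=o(1)$ for a constant $c_\delta>0$; combining the two displays via the triangle inequality yields $\P{|\,|\S_{n,1}(i)|-2\log n|>\delta\log n}=o(1)$.

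Finally I would assemble the pieces: by the union bound over $i\in[k]$ (a fixed finite number of terms) together with the disjointness bound,
\be
\P{\overline{\S}_{n,1}\notin\B}\leq \P{\tau_k\geq t_n}+\sum_{i=1}^k\P{|\,|\S_{n,1}(i)|-2\log n|>\delta\log n}=o(1),
\ee
which is the claim. I do not expect any serious obstacle here: both ingredients (tightness of $\tau_k$ and Bernstein concentration of $S_{n,1}(i)$) are quoted from~\cite{Esl16}, and the only mild point of care is checking that the deterministic shift $|\E{S_{n,1}(i)}-2\log n|=\Theta(\log\log n)$ is negligible compared with $\delta\log n$, which is immediate. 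If anything is delicate it is merely bookkeeping the uniformity of the Bernstein bound in the (fixed) parameters $k,\delta$, which is routine.
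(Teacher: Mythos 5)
Your proof is correct and follows exactly the route the paper itself sketches (in the paragraph immediately preceding the lemma): pairwise disjointness of the truncated selection sets fails precisely when $\tau_k\geq t_n$, which has probability $o(1)$ by tightness of $\tau_k$ (made explicit in Lemma~\ref{lemma:tauk}), while concentration of $|\S_{n,1}(i)|$ around $2\log n$ follows from Bernstein applied to the independent Bernoulli summands $s_{i,j}$, with the deterministic shift $|\E{S_{n,1}(i)}-2\log n|=\Theta(\log\log n)$ being negligible against $\delta\log n$. The paper does not supply its own proof — it cites Lemma 3.2 of \cite{Esl16} — but your argument is a faithful and complete reconstruction of the indicated strategy, and I see no gaps.
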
 
	
	We also know that the elements of $\overline \S_{n,1}$ are asymptotically independent for any $k\in\N$, uniformly over the set $\B$. Let $\overline \CR_{n,1}:=(\CR_{n,1}(1),\ldots, \CR_{n,1}(k))$ be $k$ independent copies of $\S_{n,1}(1)$. Then, we have the following result:
	
	\begin{lemma}[Lemma $3.2$, \cite{Esl16}]\label{lemma:StoR}
		Fix an integer $k\in\N$ and $\delta\in(0,2)$ and let $t_n=\lceil (\log n)^2\rceil$. Uniformly over $\bar J\in \B$, 
		\be
		\P{\overline \S_{n,1}=\bar J}=(1+o(1))\P{\overline \CR_{n,1}=\bar J}. 
		\ee
	\end{lemma}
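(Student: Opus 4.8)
The plan is to evaluate both sides exactly by conditioning on the coalescent one step at a time, and then to compare the two resulting products. Fix $\bar J=(J_1,\ldots,J_k)\in\B$. Since the $J_i$ are pairwise disjoint subsets of $\Omega_1$, on the event $\{\overline\S_{n,1}=\bar J\}$ no two of the vertices $1,\ldots,k$ are ever selected at the same step $j\in\Omega_1$; arguing by induction downwards from step $n$ (at which they occupy $k$ distinct singletons), it follows that on this event the vertices $1,\ldots,k$ lie in $k$ distinct trees at every step $j\in\Omega_1$, because at each such step either none of their trees is selected, or exactly one of them is and then merges with a non-special tree, leaving the number of special trees unchanged.

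I would first compute $\P{\overline\S_{n,1}=\bar J}$ by conditioning successively on the pair of trees merged at steps $n,n-1,\ldots,t_n$. Given that the merges at all steps exceeding $j$ are consistent with $\bar J$, the vertices $1,\ldots,k$ occupy $k$ distinct trees among the $j$ trees of $F_j$, and the pair merged at step $j$ is uniform and independent of the past, so the conditional probability that step $j$ is consistent with $\bar J$ equals
\[
p_j:=\begin{cases}\binom{j-k}{2}\big/\binom{j}{2}, & j\notin\bigcup_{i\in[k]}J_i,\\ (j-k)\big/\binom{j}{2}, & j\in J_i\ \text{for the (necessarily unique) }i\in[k].\end{cases}
\]
This depends on $j$ only through $k$ and whether $j\in\bigcup_iJ_i$, so $\P{\overline\S_{n,1}=\bar J}=\prod_{j\in\Omega_1}p_j$. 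Since the coordinates of $\overline\CR_{n,1}$ are independent copies of $\S_{n,1}(1)$ and $(s_{1,j})_{j\in\Omega_1}$ are independent with $\E{s_{1,j}}=2/j$, an analogous (and simpler) computation gives $\P{\overline\CR_{n,1}=\bar J}=\prod_{j\in\Omega_1}q_j$ with $q_j:=(1-2/j)^k$ for $j\notin\bigcup_iJ_i$ and $q_j:=(2/j)(1-2/j)^{k-1}$ for $j\in\bigcup_iJ_i$; both products are strictly positive for $\bar J\in\B$ and $n$ large, so their ratio is well defined.

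It then remains to show $\prod_{j\in\Omega_1}p_j/q_j=1+o(1)$ uniformly over $\bar J\in\B$, and this is the point where the definition of $\B$ is genuinely used. Taking logarithms and expanding in $1/j$ — legitimate since $k$ is fixed while $j\ge t_n=\lceil(\log n)^2\rceil\to\infty$ — a short computation gives $\log(p_j/q_j)=O(k^2/j^2)$ for $j\notin\bigcup_iJ_i$ and $\log(p_j/q_j)=(k-1)/j+O(k^2/j^2)$ for $j\in\bigcup_iJ_i$, with implicit constants depending only on $k$. Summing over $\Omega_1$, the first type of step contributes $\sum_{j\ge t_n}O(k^2/j^2)=O(k^2/t_n)=O(1/(\log n)^2)$, while for the second type one invokes the size bound $|\,|J_i|-2\log n|\le\delta\log n$ from $\B$, which yields $|\bigcup_iJ_i|\le\sum_i|J_i|\le k(2+\delta)\log n$ and hence $\sum_{j\in\bigcup_iJ_i}1/j\le k(2+\delta)\log n/t_n=O(1/\log n)$, so the second-type contribution is also $O(1/\log n)$. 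Therefore $\log\!\big(\P{\overline\S_{n,1}=\bar J}/\P{\overline\CR_{n,1}=\bar J}\big)\to 0$ uniformly over $\bar J\in\B$, which is the assertion. The estimates themselves are routine; the only delicate point is the uniformity in $\bar J$, and it is precisely that which forces one to exploit the concentration $|\,|J_i|-2\log n|\le\delta\log n$ encoded in $\B$: without such a bound $\sum_{j\in\bigcup_iJ_i}1/j$ could be of order $\log n$ and the argument would break down.
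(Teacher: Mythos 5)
The paper cites this result from~\cite{Esl16} without reproducing a proof, so there is no in-paper argument to compare against; on its own merits, your proof is correct, and it follows the natural (and almost certainly the intended) route. Your exact one-step-conditioning computation gives $\P{\overline\S_{n,1}=\bar J}=\prod_{j\in\Omega_1}p_j$ and $\P{\overline\CR_{n,1}=\bar J}=\prod_{j\in\Omega_1}q_j$, both factorizations are correct, and the asymptotic comparison $\log(p_j/q_j)=O(k^2/j^2)$ off $\bigcup_i J_i$ and $\log(p_j/q_j)=(k-1)/j+O(k^2/j^2)$ on $\bigcup_i J_i$ checks out (in the former case the $O(1/j)$ terms cancel exactly). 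The step that genuinely uses the definition of $\B$, namely bounding $\sum_{j\in\bigcup_i J_i}1/j\le k(2+\delta)\log n/t_n=O(1/\log n)$ via the cardinality bound $|J_i|\le(2+\delta)\log n$ and the floor $j\ge t_n=\lceil(\log n)^2\rceil$, is correctly identified and the implicit constants depend only on $k$ and $\delta$, which is precisely what uniformity over $\bar J\in\B$ requires. One small remark: your inductive claim that the $k$ special vertices remain in $k$ distinct trees throughout $\Omega_1$ on the event $\{\overline\S_{n,1}=\bar J\}$ (which is what makes the conditional probability depend only on $j$ and not on the positions of those trees) is the crux of the exactness of $p_j$, and you state it clearly; it is worth emphasizing that this is exactly the role of pairwise disjointness in $\B$, and indeed without it the event $\{\overline\S_{n,1}=\bar J\}$ could force two special vertices to coalesce during $\Omega_1$, after which the step-by-step formula for $p_j$ would no longer hold.
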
  
	
	The following lemma provides bounds for the decay of the tail distribution of $\tau_k$, conditionally on certain events.
	
	\begin{lemma}\label{lemma:tauk}
		Fix $k\in\N$ and recall $\tau_k$ from~\eqref{eq:tauk}. We have that $(\tau_k)_{n\in\N}$ is a tight sequence of random variables. Furthermore, fix $c\in(0,2)$ and let $(d_i)_{i\in[k]}\in \N_0^k$ such that $d_i\leq c\log n$ for all $i\in[k]$. Then, 
		\be \label{eq:taudeg}
		\P{\tau_k<\lceil (\log n)^2\rceil \,\Big|\, \d_n(i)\geq d_i, i\in[k]}\geq 1-\CO\Big(\frac{1}{\log n }\Big).
		\ee 
		Furthermore, let $(\ell_i)_{i\in[k]}\in [n]^k$ be distinct such that $\ell_i$ diverges as $n\to\infty$ for all $i\in[k]$. Then, 
		\be \label{eq:taulabel}
		\P{\tau_k<\min_{i\in[k]}\log \ell_i\, \Big|\, \ell_n(i)=\ell_i, i\in[k]}\geq 1-\CO\Big(\frac{1}{\min_{i\in[k]}\log \ell_i}\Big).
		\ee 
	\end{lemma}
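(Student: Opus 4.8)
Running the coalescent backwards, as long as no two of the lineages containing $1,\dots,k$ have merged the forest $F_j$ has exactly $k$ trees holding a vertex of $[k]$, so the transition $F_j\to F_{j-1}$ merges two of them with conditional probability $\binom{k}{2}/\binom{j}{2}$. Summing over $j\ge t$ gives $\P{\tau_k\ge t}\le k(k-1)/(t-1)$, which is $o(1)$ uniformly in $n$ as $t\to\infty$; this proves tightness of $(\tau_k)_n$ and is also the main quantitative input below. For both conditional bounds we fix an event $\mathcal E$ and a truncation level $t_n$, write $\P{\tau_k\ge t_n\mid\mathcal E}=1-\P{\tau_k< t_n,\mathcal E}/\P{\mathcal E}$, and use that $\{\tau_k< t_n\}$ is exactly the event that $\overline\S_{n,1}$ (formed with truncation sequence $t_n$) has pairwise disjoint coordinates, on which Lemmas~\ref{lemma:probonevert}--\ref{lemma:probmultvert} give product-form conditional probabilities. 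The goal is to show $\P{\tau_k< t_n,\mathcal E}$ and $\P{\mathcal E}$ share the same leading term.

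\textbf{Degrees.}
Take $t_n=\lceil(\log n)^2\rceil$ and $\mathcal E=\{d_n(i)\ge d_i,\ i\in[k]\}$. The $\ell=1$ case of Lemma~\ref{lemma:probonevert} (Remark~\ref{rem:nolab}) together with Lemma~\ref{lemma:probmultvert} gives, for pairwise disjoint $\bar J$, $\P{d_n(i)\ge d_i\ \forall i\mid\overline\S_{n,1}=\bar J}=2^{-\sum_i d_i}\prod_i\1{|J_i|\ge d_i}$; hence $\P{\tau_k< t_n,\mathcal E}=2^{-\sum_i d_i}\,\P{\tau_k< t_n,\ S_{n,1}(i)\ge d_i\ \forall i}$. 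Since $S_{n,1}(i)=\sum_{j=t_n}^n s_{i,j}$ has mean $\sim 2\log n$, Bernstein's inequality gives $\P{S_{n,1}(i)< d_i}\le n^{-\beta'}$ for some $\beta'>0$, uniformly over the $d_i\le c\log n$ in play; together with $\P{\tau_k\ge t_n}=\mathcal{O}(1/(\log n)^2)$ this yields $\P{\tau_k< t_n,\mathcal E}=2^{-\sum_i d_i}(1-\mathcal{O}(1/(\log n)^2))$. By Proposition~\ref{prop:deg}, $\P{\mathcal E}=2^{-\sum_i d_i}(1+o(1))$, and \eqref{eq:taudeg} follows (in fact with the sharper error $\mathcal{O}(1/(\log n)^2)$).

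\textbf{Labels.}
Now take $t_n=\lceil\min_i\log\ell_i\rceil$; this is an admissible truncation for Lemma~\ref{lemma:probonevert}\eqref{eq:eql} because $\ell_i\ge\lceil\log\ell_i\rceil\ge t_n$ for all $i$, and note $\min_i\ell_i=e^{\min_i\log\ell_i}\ge e^{t_n-1}$. Let $\mathcal E=\{\ell_n(i)=\ell_i,\ i\in[k]\}$. Here the denominator is exact: by Corollary~\ref{cor:unifnodes}, $(\ell_n(1),\dots,\ell_n(k))$ is a uniform random injection of $[k]$ into $[n]$, so $\P{\mathcal E}=1/(n)_k$. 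By $\P{\tau_k< t_n,\mathcal E}=(n)_k^{-1}-\P{\tau_k\ge t_n,\mathcal E}$ it suffices to prove $\P{\tau_k\ge t_n,\mathcal E}\le\mathcal{O}(1/t_n)\,(n)_k^{-1}$, which we do by decomposing over the step $j^*$ of the first merge among $1,\dots,k$. If $t_n\le j^*<\min_i\ell_i$ then every $\ell_i>j^*$, so both $\mathcal E$ and $\{\text{no marked merge at steps }>j^*\}$ are measurable with respect to the transitions of steps $>j^*$ (indeed $\{\ell_n(i)=\ell_i\}$ depends only on the selections and coin flips at steps $\ge\ell_i$), while the event that step $j^*$ itself is a marked merge is then conditionally of probability $\binom{k}{2}/\binom{j^*}{2}$; this gives $\P{\tau_k=j^*,\mathcal E}\le(n)_k^{-1}\binom{k}{2}/\binom{j^*}{2}$, and summing over $j^*\ge t_n$ contributes $\mathcal{O}(1/t_n)\,(n)_k^{-1}$. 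For $j^*\ge\min_i\ell_i$ one has $\tau_k\ge\min_i\ell_i\ge e^{t_n-1}$, an event of probability $\mathcal{O}(e^{-t_n})=\mathcal{O}(1/t_n)$; there one bounds the conditional probability of $\mathcal E$ given the selections $(\{a_j,b_j\})_j$ by isolating the lineages with $\ell_i>\tau_k$, which still contribute a clean product factor since all coalescence-induced overlaps lie at steps $\le\tau_k$, and handling the residual label events (those with $\ell_i\le\tau_k$) by a further decomposition according to how many of the $\ell_i$ lie below $\tau_k$, each layer being governed by the increasing rarity of $\{\tau_k\ge\ell_{(m)}\}$ for the $m$-th smallest label $\ell_{(m)}$; this step also uses the near-independence of the truncated selection sets (Lemma~\ref{lemma:StoR}) and the telescoping identity $\E{2^{-|S_n(i)\cap(\ell,n]|}\1{\ell\in S_n(i)}}=2/n$.

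\textbf{Expected main obstacle.}
The crux is this last range for the label statement: once $\tau_k$ exceeds one of the $\ell_i$, the coin flips pinning down that label are shared among several lineages and the product factorisation of Lemma~\ref{lemma:probmultvert} breaks down, so one must show that the rarity of $\{\tau_k\ge\min_i\ell_i\}$ more than compensates for the loss of independence. The degree statement avoids this entirely because $\{d_n(i)\ge d_i\}$ only constrains the first $d_i\le c\log n$ selections of lineage $i$, which with probability $1-n^{-\Omega(1)}$ all fall among steps $\ge t_n=(\log n)^2$, where the $k$ lineages still sit in disjoint trees; everything else reduces to the tightness estimate, Proposition~\ref{prop:deg}, exchangeability via Corollary~\ref{cor:unifnodes}, and routine Bernstein and telescoping bounds.
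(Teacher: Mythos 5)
Your argument splits into three parts, and they fare quite differently.

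\textbf{Tightness.} Correct, and essentially identical to the paper's computation $\P{\tau_k<t}\ge\prod_{j\ge t}(1-k(k-1)/(j(j-1)))\ge 1-k(k-1)/(t-1)$.

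\textbf{Degree case.} The idea is sound but goes through a route the paper does not take: you condition on $\overline{\S}_{n,1}=\bar J$ being pairwise disjoint and invoke the product form, whereas the paper applies Bayes' rule and then a Chebyshev bound on $|\bigcup_i\S_n(i)\cap[s_n,n]|$. Two small points. First, the identity you write, $\P{d_n(i)\ge d_i\ \forall i\mid\overline\S_{n,1}=\bar J}=2^{-\sum_i d_i}\prod_i\1{|J_i|\ge d_i}$, should be an inequality ``$\ge$'': when $|J_i|<d_i$ the left side is positive (the streak can spill below $t_n$), while the right side is $0$; fortunately the inequality goes in the direction you need, so the chain $\P{\tau_k<t_n,\mathcal E}\ge 2^{-\sum d_i}\P{\tau_k<t_n,\,S_{n,1}(i)\ge d_i\ \forall i}\ge 2^{-\sum d_i}(1-\mathcal{O}((\log n)^{-2}))$ still holds. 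Second, you are right that replacing the paper's Chebyshev by Bernstein makes $\P{S_{n,1}(i)<d_i}$ polynomially small, so the error is then driven entirely by $\P{\tau_k\ge t_n}=\mathcal{O}((\log n)^{-2})$ and you genuinely obtain a sharper rate than the paper's $\mathcal{O}(1/\log n)$. This is a valid refinement.

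\textbf{Label case — the gap.} Here your approach is genuinely different from the paper's, and it is where the argument is incomplete. You decompose over $j^*=\tau_k$. For $t_n\le j^*<\min_i\ell_i$ the independence-across-steps argument giving $\P{\tau_k=j^*,\mathcal E}\le (n)_k^{-1}\binom{k}{2}/\binom{j^*}{2}$ is correct, and summing yields the $\mathcal{O}(1/t_n)(n)_k^{-1}$ you want. But you still need $\P{\tau_k\ge\min_i\ell_i,\,\mathcal E}=\mathcal{O}(1/t_n)(n)_k^{-1}$. The trivial bounds $\P{\tau_k\ge\min_i\ell_i,\mathcal E}\le\P{\mathcal E}=(n)_k^{-1}$ (no extra factor) and $\le\P{\tau_k\ge\min_i\ell_i}=\mathcal{O}(\mathrm{e}^{-t_n})$ (orders of magnitude larger than $(n)_k^{-1}$) are each insufficient, and what is actually needed is a bound of the shape $\P{\mathcal E\mid\tau_k=j^*}=\mathcal{O}((n)_k^{-1})$ uniformly over $j^*\ge\min_i\ell_i$ — i.e.\ a version of the very independence you are trying to establish, in exactly the regime where overlapping selection sets make Lemma~\ref{lemma:probmultvert} inapplicable. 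Your sketch (``isolate the lineages with $\ell_i>\tau_k$, further decompose over how many $\ell_i$ lie below $\tau_k$, use Lemma~\ref{lemma:StoR} and a telescoping identity'') names plausible ingredients but does not close this circle; in particular Lemma~\ref{lemma:StoR} is stated only uniformly over $\bar J\in\B$, i.e.\ over disjoint families, which is precisely what is violated here. The paper sidesteps the entire difficulty by never decomposing over $j^*$: it writes $\{\tau_k<t_n\}=\bigcap_{j\ge t_n}\{\{a_j,b_j\}\not\subseteq[k]\}$ and lower bounds the conditional probability factor-by-factor, using that the conditioning $\{\ell_n(i)=\ell_i\}$ can only make a marked merge at a given step \emph{less} likely, and treating the $k$ exceptional steps $j=\ell_1,\dots,\ell_k$ by an exact computation $\P{\{a_{\ell_j},b_{\ell_j}\}\not\subseteq[k]\mid\mathcal L_k}=1-(k-1)/(\ell_j-1)$. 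Without an argument of comparable strength for your tail range, the label bound is not established.
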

	
	\begin{proof}
		We first prove the tightness of $(\tau_k)_{n\in\N}$. Fix $\eps>0$ and set $K_\eps:=\lceil2+ k^2/\eps\rceil $. We recall that in Definition~\ref{def:king}, $\{a_j,b_j\}$ denotes the two trees selected at step $j$ in the Kingman $n$-coalescent, for $2\leq j\leq n$. Also, the trees are ordered by their smallest-labelled vertex, so that $\tau_k<K_\eps$ is implied by $\{a_j,b_j\}\not\subseteq [k]$ for all $K_\eps\leq j\leq n$. Since the selection of these roots is independent at each step, we obtain
		\be 
		\P{\tau_k<K_\eps} \geq \mathbb P\bigg(\bigcap_{j=K_\eps}^n \big\{\{a_j,b_j\}\not\subseteq [k]\big\}\bigg)
		=\prod_{j=K_\eps}^n \P{\{a_j,b_j\}\not\subseteq [k]}=\prod_{j=K_\eps}^n \Big(1-\frac{k(k-1)}{j(j-1)}\Big). 
		\ee   
		We then bound the product from below to obtain the lower bound 
		\be \label{eq:prodlb}
		\P{\tau_k<K_\eps} \geq 1-\sum_{j=K_\eps}^n \frac{k^2}{(j-1)^2}\geq 1-k^2\int_{K_\eps-2}^\infty x^{-2}\,\d x=1-\frac{k^2}{K_\eps-2}\geq 1-\eps.
		\ee 
		As a result, $\P{\tau_k\geq K_\eps}\leq \eps$ for all $n\in\N$, from which the tightness follows.
		
		We then prove~\eqref{eq:taudeg} and set $s_n:=\lceil (\log n)^2\rceil $ for ease of writing. Using Bayes' theorem, the bound in~\eqref{eq:prodlb} and that $\P{\d_n(i)\geq d_i, i\in[k]}=2^{-\sum_{i=1}^k d_i}(1+o(1))$ by Proposition~\ref{prop:deg}, we obtain 
		\be\ba
		\P{\tau_k<s_n\,|\, \d_n(i)\geq d_i, i\in[k]}&=\frac{\P{\tau_k<s_n}}{\P{\d_n(i)\geq d_i, i\in[k]}}\P{\d_n(i)\geq d_i, i\in[k]\,|\, \tau_k<s_n}\\
		&=\big(1-\CO\big(s_n^{-1}\big)\big)2^{\sum_{i=1}^k d_i}\P{\d_n(i)\geq d_i, i\in[k]\,|\, \tau_k<s_n}.
		\ea\ee 
		As in the proof of Lemma~\ref{lemma:probmultvert}, the event $\{\d_n(i)\geq d_i, i\in[k]\}$ occurs when both \\$|\S_n(i)\cap [s_n,n]|\geq d_i$ holds and when the first $d_i$ associated coin flips favour vertex $i$, for all $i\in[k]$. Conditionally on $\{\tau_k<s_n\}$, we know that all these coin flips occurs at different steps for \emph{all} vertices $1,\ldots, k$, so that they are independent. Moreover, they are independent of the selection sets, so that we obtain the lower bound
		\be\ba \label{eq:unionineq}
		\P{\tau_k<s_n\,|\, \d_n(i)\geq d_i, i\in[k]}&\geq \big(1-\CO\big(s_n^{-1}\big)\big)\P{|\S_n(i)\cap [s_n, n]|\geq d_i, i\in[k]\,|\, \tau_k<s_n}\\
		&=\big(1-\CO\big(s_n^{-1}\big)\big)\mathbb P\bigg(\Big|\cup_{i=1}^k \S_n(i)\cap [s_n,n]\Big|\geq \sum_{i=1}^k d_i\bigg| \tau_k<s_n\bigg).
		\ea\ee
		Again, the last step uses the conditional event, on which we have that all $\S_n(i)\cap[s_n,n]$ are disjoint, so that $|\S_n(i)\cap [s_n,n]|\geq d_i$ for all $i\in[k]$ is equivalent to the cardinality of the union of all these sets being greater than the sum of the $d_i$. We also know, conditionally on $\{\tau_k<s_n\}$, that for every $j\in[s_n,n]$, at most one $s_{i,j}$ can equal one among all $i\in[k]$. So, for every $j\in[s_n,n]$, 
		\be 
		\P{\cup_{i=1}^k \{s_{i,j}=1\}\,|\, \tau_k<s_n}=k\P{s_{1,j}=1\,|\, \tau_k<s_n}=2k\frac{j-k}{j(j-1)-k(k-1)}=\frac{2k}{j+k-1}.
		\ee 
		Hence, if we let $(\wt s_j)_{j=s_n}^n$ be independent indicator random variables such that \\$\P{\wt s_j=1}=2k/(j+k-1)$, we can write, conditionally on $\{\tau_k<s_n\}$. 
		\be 
		\Big|\big(\cup_{i=1}^k \S_n(i)\big)\cap [s_n,n]\Big|\overset d= \sum_{j=s_n}^n \wt s_j.
		\ee 
		Since $\log s_n=o(\log n)$, it is readily checked that 
		\be
		\E{\sum_{j=s_n}^n \wt s_j}=2k(1+o(1))\log n, \qquad \Var\Big(\sum_{j=s_n}^n \wt s_j\Big)=2k(1+o(1))\log n.
		\ee 
		Again using that $d_i\leq c\log n$ for each $i\in[k]$ and all $n$ sufficiently large, where $c<2$, we obtain for some $\wt c\in (0,2-c)$ by using Chebychev's inequality,
		\be \ba
		\mathbb P{}&\bigg(\Big|\big(\cup_{i=1}^k \S_n(i)\big)\cap [s_n,n]\Big|\geq \sum_{i=1}^k d_i\,\bigg|\, \tau_k<s_n\bigg)\\
		&\geq \mathbb P\Bigg(\sum_{j=s_n}^n \wt s_j\geq ck \log n\Bigg)\\
		&\geq 1- \mathbb P\Bigg(\bigg(\sum_{j=s_n}^n \wt s_j-\mathbb E\bigg[\sum_{j=s_n}^n \wt s_j\bigg]\bigg)^2\geq (\wt ck\log n)^2\Bigg)\\
		&\geq 1-\frac{1}{(\wt ck\log n)^2}\Var\Big(\sum_{j=s_n}^n \wt s_j\Big)=1-\mathcal O\Big(\frac{1}{\log n}\Big),
		\ea\ee 
		which, combined with~\eqref{eq:unionineq}, completes the proof of~\eqref{eq:taudeg}.
		
		We now prove~\eqref{eq:taulabel} and we set $t_n=\min_{i\in[k]}\log \ell_i$ and note that $t_n$ diverges with $n$. As in the proof of~\eqref{eq:taudeg}, 
		\be\ba 
		\mathbb P\Big({}&\tau_k<t_n\,\Big|\,\ell_n(i)=\ell_i, i\in[k]\Big)\\
		&\geq \mathbb P\bigg(\bigcap_{j=t_n}^n \big\{\{a_j,b_j\}\not\subseteq [k]\big\}\,\Big|\, \ell_n(i)=\ell_i, i\in[k]\bigg)\\
		&=\prod_{j=t_n}^n \P{\{a_j,b_j\}\not\subseteq [k]\,|\, \ell_n(i)=\ell_i, i\in[k]}\\
		&\geq\!\!\!\prod_{\substack{j=t_n\\ j\neq \ell_i, i\in[k]}}^n\!\!\!\! \P{\{a_j,b_j\}\not\subseteq [k]}\prod_{j=1}^k \P{\{a_{\ell_j},b_{\ell_j}\}\not\subseteq[k]\,|\, \ell_n(i)=\ell_i, i\in[k]}.
		\ea\ee 
		Here, omitting the conditional event for $j\neq \ell_i$ for any $i\in[k]$ yields a lower bound. Indeed, for any two distinct $i,i'\in[k]$, if $j>\max\{\ell_i,\ell_{i'}\}$ then $\{a_j,b_j\}=\{i,i'\}$ cannot occur conditionally on $\ell_n(i)=\ell_i$. Furthermore, we isolate the steps $\ell_1,\ldots, \ell_k$, since the conditional event prescribes that vertex $i$ is selected at step $\ell_i$. For any $j\in[k]$, 
		\be\ba  
		\P{ \{a_{\ell_j},b_{\ell_j}\}\not \subseteq [k]\,|\, \ell_n(i)=\ell_i, i\in[k]}&=1-\P{\{a_{\ell_j},b_{\ell_j}\}\subseteq[k]\,|\,\{ a_{\ell_j}=j\}\cup\{b_{\ell_j}=j\}}\\
		&=1-\frac{k-1}{\ell_j-1}. 
		\ea \ee 
		As a result, we obtain 
		\be\ba  
		\P{\tau_k<t_n\,\Big|\,\ell_n(i)=\ell_i, i\in[k]} &\geq \prod_{\substack{j=t_n\\ j \neq \ell_i, i\in[k]}}^m \Big(1-\frac{k(k-1)}{j(j-1)}\Big)\prod_{j=1}^k \Big(1-\frac{k-1}{\ell_j-1}\Big)\\
		&\geq \prod_{\substack{j=t_n\\ j \neq \ell_i, i\in[k]}}^m \Big(1-\frac{k(k-1)}{(j-1)^2}\Big)\Big(1-\frac{k-1}{t_n}\Big)^k\\
		&=\prod_{j=t_n}^n \Big(1-\frac{k(k-1)}{(j-1)^2}\Big)\prod_{j=1}^k \Big(1-\frac{k(k-1)}{(\ell_j-1)^2}\Big)^{-1}\Big(1-\frac{k-1}{t_n}\Big)^k\\
		&\geq \prod_{j=t_n}^n \Big(1-\frac{k(k-1)}{(j-1)^2}\Big)\Big(1-\frac{k(k-1)}{(n-1)^2}\Big)^{-k}\Big(1-\frac{k-1}{t_n}\Big)^k\\
		&\geq 1-\mathcal O\big(t_n^{-1}\big),
		\ea\ee 
		where the last step follows from~\eqref{eq:prodlb}, and which concludes the proof.
	\end{proof}
	
	Beyond the sets $\A$ and $\B$ and the random variable $\tau_k$, we also want to control of the probability of the events $\{\ell_n(i)=\ell_i, i\in[k]\}$ and $\{\d_n(i)\geq d_i, i\in[k]\}$ conditionally on the truncated selection sets $\overline S_{n,1}$. To this end, we define, for $\bar\ell:=(\ell_i)_{\in[k]}\in \N^k$,
	\be \ba 
	\wtA&:=\{\bar J\in \Omega_1^k: \P{\ell_n(i)=\ell_i, i\in[k], \overline \S_{n,1}=\bar J}>0\},\\
	\wtB&:=\{\bar J\in \Omega_1^k: (J_1, \ldots, J_k)\text{ are pairwise disjoint}\}.
	\ea \ee 
	We then have the following lemma, which is partially covered by~\cite[Lemma $3.1$]{Esl16}.
	
	\begin{lemma}\label{lemma:labeldegprob}
		Fix $k\in\N$ and let $(\ell_i)_{i\in[k]}\in [n]^k$ such that $\ell_i\neq \ell_j$ when $i\neq j$. Then, 
		\be \label{eq:labelprob}
		\P{\ell_n(i)=\ell_i, i\in[k]}=\frac{1}{(n)_k}.
		\ee 
		Also, when the truncation sequence $t_n$ diverges with $n$,
		\be\label{eq:wtbc} 
		\P{\ell_n(i)=\ell_i, i\in[k], \overline \S_{n,1}\in \wtB^c}=o(n^{-k}).
		\ee 	
		Finally, let $(d_i)_{i\in[k]},\N_0^k$ and let $t_n=\lceil (\log n)^2\rceil$. If $\bar J\in \A$, 
		\be \label{eq:degprob}
		\P{\d_n(i)\geq d_i, i\in[k]\,|\, \overline{\S}_{n,1}=\bar J}=2^{-\sum_{i=1}^k d_i}.
		\ee
	\end{lemma}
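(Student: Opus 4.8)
The plan is to handle the three assertions of Lemma~\ref{lemma:labeldegprob} separately. For~\eqref{eq:labelprob} I would invoke exchangeability: by Corollary~\ref{cor:unifnodes} the label vector $(\ell_n(i))_{i\in[n]}$ has the same law as $(\sigma(i))_{i\in[n]}$ for a uniform random permutation $\sigma$ of $[n]$, so for distinct $\ell_1,\dots,\ell_k$ one gets $\P{\ell_n(i)=\ell_i,\,i\in[k]}=\P{\sigma(i)=\ell_i,\,i\in[k]}=(n-k)!/n!=1/(n)_k$. (Equivalently, the coalescent dynamics do not see the labels of the $n$ initial roots, so the final relabelling is an exchangeable bijection onto $[n]$, hence a uniform permutation.)

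For~\eqref{eq:wtbc} the key observation is the inclusion $\{\overline\S_{n,1}\in\wtB^c\}\subseteq\{\tau_k\ge t_n\}$: if $\S_{n,1}(i)$ and $\S_{n,1}(i')$ share a step $j$ for distinct $i,i'\in[k]$, then $j\in\Omega_1$, so $j\ge t_n$, and $s_{i,j}=s_{i',j}=1$ forces $\tau_k\ge t_n$. Combined with~\eqref{eq:labelprob} this gives $\P{\ell_n(i)=\ell_i,\,i\in[k],\,\overline\S_{n,1}\in\wtB^c}\le (n)_k^{-1}\,\P{\tau_k\ge t_n\mid\ell_n(i)=\ell_i,\,i\in[k]}$, so it remains to see the conditional probability is $o(1)$. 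This is exactly what Lemma~\ref{lemma:tauk}, equation~\eqref{eq:taulabel}, provides once the truncation sequence is chosen (as it is in the applications) with $t_n\ge\min_{i\in[k]}\log\ell_i$, e.g.\ $t_n=\lceil(\log n)^2\rceil$: then $\{\tau_k\ge t_n\}\subseteq\{\tau_k\ge\min_{i\in[k]}\log\ell_i\}$, whose conditional probability is $\CO(1/\min_{i\in[k]}\log\ell_i)=o(1)$. (Alternatively one bounds the left-hand side directly by a union over pairs $i\neq i'$ in $[k]$ and steps $j\ge t_n$ of $\P{\ell_n(m)=\ell_m,\,m\in[k],\,s_{i,j}=s_{i',j}=1}\lesssim(n)_k^{-1}j^{-2}$, which sums to $\CO((n)_k^{-1}t_n^{-1})$.)

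For~\eqref{eq:degprob} I would condition on $\{\overline\S_{n,1}=\bar J\}$ with $\bar J\in\A$ and isolate the coin flips. For such $\bar J$ each vertex $i$ has its first $d_i$ selection steps (the $d_i$ largest elements of $\S_n(i)$) among the $d_i$ largest elements of $J_i\subseteq\Omega_1$; moreover, since a coin flip at a step lying in two of the sets $J_i$ can favour at most one of the two vertices, positivity of $\P{\overline\S_{n,1}=\bar J,\,d_n(i)\ge d_i,\,i\in[k]}$ forces the $\sum_{i=1}^k d_i$ steps in question to be pairwise distinct. By the first line of~\eqref{eq:quantdescr}, conditionally on $\{\overline\S_{n,1}=\bar J\}$ the event $\{d_n(i)\ge d_i,\,i\in[k]\}$ coincides with the event that for every $i$ the coin flips $\xi_j$ at its $d_i$ steps all favour vertex $i$; since $(\xi_j)_{2\le j\le n}$ are i.i.d.\ $\mathrm{Bernoulli}(1/2)$ and independent of $(\{a_j,b_j\})_{2\le j\le n}$, hence of $\overline\S_{n,1}$, these $\sum_i d_i$ distinct flips independently produce the prescribed outcomes with probability $1/2$ each, so the conditional probability equals $2^{-\sum_{i=1}^k d_i}$.

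The genuinely delicate step is~\eqref{eq:wtbc}: one must ensure that conditioning on the rare event $\{\ell_n(i)=\ell_i,\,i\in[k]\}$ does not inflate $\tau_k$, which is precisely the purpose of Lemma~\ref{lemma:tauk}; the rest is bookkeeping, the only subtlety in~\eqref{eq:degprob} being the distinctness of the degree-defining coin-flip steps when $\bar J\in\A$.
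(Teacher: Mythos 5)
Your treatment of \eqref{eq:labelprob} coincides with the paper's (Corollary~\ref{cor:unifnodes} gives a uniform relabelling, hence the probability $1/(n)_k$), and for \eqref{eq:degprob} you give a direct argument whereas the paper simply cites \cite[Lemma~3.2]{Esl16}; your argument is a reasonable reconstruction of that lemma. There is, however, a small implicit assumption you should surface: your step ``each vertex $i$ has its first $d_i$ selection steps among the $d_i$ largest elements of $J_i$'' requires $|J_i|\geq d_i$ for each $i$. Membership $\bar J\in\A$ does force the $\sum_i d_i$ degree-defining steps to be pairwise distinct (exactly as you argue), but it does \emph{not} force $|J_i|\ge d_i$: a vertex may accumulate part of its initial streak at steps below $t_n$, so $\P{\overline\S_{n,1}=\bar J,\,d_n(i)\ge d_i,\,i\in[k]}$ can be positive with $|J_i|<d_i$, and in that case the conditional probability is $2^{-\sum_i d_i}\prod_i\P{|\S_n(i)\cap[2,t_n-1]|\ge d_i-|J_i|\,|\,\cdot}<2^{-\sum_i d_i}$. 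In the paper's applications \eqref{eq:degprob} is used on $\B\subseteq\A$ (where $|J_i|\ge(2-\delta)\log n\ge d_i$) or merely as an upper bound, so nothing downstream breaks; but you should either restrict to that case or replace ``$=$'' by ``$\le$''.

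The real divergence from the paper is in \eqref{eq:wtbc}. You correctly note $\{\overline\S_{n,1}\in\wtB^c\}\subseteq\{\tau_k\ge t_n\}$ and reduce to bounding $\P{\tau_k\ge t_n\mid\ell_n(i)=\ell_i,\,i\in[k]}$, but you then invoke \eqref{eq:taulabel}, which needs two hypotheses not present in Lemma~\ref{lemma:labeldegprob}: that the $\ell_i$ diverge, and that $t_n\ge\min_i\log\ell_i$ (so that $\{\tau_k\ge t_n\}\subseteq\{\tau_k\ge\min_i\log\ell_i\}$). The lemma as stated only assumes $t_n\to\infty$. The paper's proof instead extracts a different, sharper consequence of the conditioning: if $\ell_n(m)=\ell_m$ for all $m\in[k]$, then for any $j>\max_i\ell_i$ no two of the sets $\S_{n,1}(i)$ can contain $j$ (one of the two vertices would lose the coin flip at step $j$ and receive a label larger than $\ell_i$). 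Hence, conditionally on the labels, $\{\overline\S_{n,1}\in\wtB^c\}$ forces $\tau_k\in[t_n,\max_i\ell_i]$, which handles non-diverging $\ell_i$ automatically (the interval is eventually empty) and is the step your proposal is missing. Your parenthetical union-bound alternative, $\P{\ell_n(m)=\ell_m,\,m\in[k],\,s_{i,j}=s_{i',j}=1}\lesssim(n)_k^{-1}j^{-2}$, would be a valid route if justified, but as stated it asserts exactly the conditional decorrelation that Lemma~\ref{lemma:tauk} is there to prove, so it is not a free shortcut.
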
 
	
	\begin{proof}
		The first result in~\eqref{eq:labelprob} follows from Corollary~\ref{cor:unifnodes}, as each vertex obtains a uniform label from $[n]$ after the relabelling of the final tree $F_1$ in the Kingman $n$-coalescent and all $\ell_i$ are distinct.
		
		To prove~\eqref{eq:wtbc}, we write 
		\be \ba  
		\P{\ell_n(i)=\ell_i, i\in[k], \overline \S_{n,1}\in \wt{ \mathcal B}_n^c}&=\P{\overline \S_{n,1}\in \wt{ \mathcal B}_n^c\,\Big|\,\ell_n(i)=\ell_i, i\in[k]}\P{\ell_n(i)=\ell_i, i\in[k]}\\
		&=\P{\overline \S_{n,1}\in \wt{ \mathcal B}_n^c\,\Big|\,\ell_n(i)=\ell_i, i\in[k]}\frac{1}{(n)_k},
		\ea \ee
		where the last step follows from~\eqref{eq:labelprob}. It thus remains to argue to that probability on the right-hand side is $o(1)$. For $\overline \S_{n,1}\in \wtB^c$ to hold, the truncated selection sets should overlap at some step $t_n\leq j\leq n$, i.e.\ $\tau_k\geq t_n$ should hold. Conditionally on the event $\{\ell_n(i)=\ell_i,i\in[k]\}$, however, the truncated selection sets in $\overline\S_{n,1}$ cannot overlap at certain steps. Namely, for $j>\max_{i\in[k]}\ell_i$, $j\in \S_{n,1}(i)$ can hold for at most one $i\in[k]$. Indeed, if the converse would be the case, i.e.\ $j\in \S_{n,1}(i)$ and $j\in \S_{n,1}(i')$ for some distinct $i,i'\in[k]$, then one of the vertices $i,i'$, let us assume this is vertex $i$, would lose the associated coin flip at step $j$ and hence its label in the random recursive tree would be $j>\ell_i$. This clearly contradicts the conditional event. As a result, on the conditional event $\{\ell_n(i)=\ell_i, i\in[k]\}$, the event $\{\overline \S_{n,1}\in\wtB^c\}$ implies that $\{t_n\leq \tau_k\leq \max_{i\in[k]}\ell_i\}$ holds. Hence, by Lemma~\ref{lemma:tauk} and since $t_n$ diverges with $n$,
		\be 
		\P{\overline \S_{n,1}\in \wt{ \mathcal B}_n^c\,\Big|\,\ell_n(i)=\ell_i, i\in[k]}\leq \mathbb P\Big(t_n\leq \tau_k\leq \max_{i\in[k]}\ell_i\Big)\leq \P{\tau_k\geq t_n}=o(1),
		\ee 
		as desired.
		
		The final result in~\eqref{eq:degprob} is proved in~\cite[Lemma $3.2$]{Esl16}.
	\end{proof}
	
	In Lemma~\ref{lemma:probmultvert} we saw that, as long as the truncated selection sets $(\S_{n,1}(i))_{i\in[k]}$ are pairwise disjoint, then the events $\{h_{n,1}(i)\leq h_i, \ell_n(i)\geq \ell_i, \d_n(i)\geq d_i\}$, $i\in[k]$, are independent, conditionally on $\overline \S_{n,1}$. Furthermore, when the truncation sequence $t_n$ diverges as $n\to\infty$, we already observed that the event $\{\tau_k<t_n\}$ holds with high probability by Lemma~\ref{lemma:tauk}, which implies that the $(\S_{n,1}(i))_{i\in[k]}$ are disjoint.
	
	On the other hand, we use the truncated depths $(h_{n,1}(i))_{i\in[k]}$ merely for technical reasons, and are really interested in the depths $(h_n(i))_{i\in[k]}$. As a result, choosing a truncation sequence $(t_n)_{n\in\N}$ that diverges with $n$ `too quickly', may lead to different behaviour of $h_{n,1}(1)$ compared to $h_n(1)$. In other words, if $t_n$ grows `too quickly', then $h_{n,2}(1)=h_n(1)-h_{n,1}(1)$ might become `too large'. In the following lemma we make this informal statement more precise and provide constraints on $t_n$ to avoid such discrepancies between $h_n(1)$ and $h_{n,1}(1)$.  
	
	\begin{lemma}[Partially from Lemma $2.7$, \cite{Esl16}]\label{lemma:h2n}
		Fix $k\in\N$ and $c\in(0,2)$. If $d_i\leq c\log n$ for all $i\in[k]$ and $t_n=\lceil (\log n)^2\rceil$, then for any $j\in[k]$ and any $\eps>0$, 
		\be \label{eq:h2ndeg}
		\lim_{n\to\infty}\P{h_{n,2}(j)\geq \eps\sqrt{\log n}\,\Big|\, \d_n(i)\geq d_i, i\in[k]}=0.
		\ee 
		Furthermore, let $(\ell_i)_{i\in[k]}\in [n]^k$ be $k$ distinct integers that diverge as $n\to\infty$. If $\log t_n=o(\min_{i\in[k]}\sqrt{\log \ell_i})$, then for any $j\in[k]$ and any $\eps>0$, 
		\be  \label{eq:h2nlab}
		\lim_{n\to\infty} \P{h_{n,2}(j)\geq \eps \sqrt{\log \ell_j}\,\Big|\, \ell_n(i)=\ell_i, i\in[k]}=0.
		\ee 
	\end{lemma}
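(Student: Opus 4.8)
\emph{Plan.} For $s\in\{2,\dots,n\}$ write $\mathcal H_{\geq s}$ for the $\sigma$-algebra generated by the selected pairs $\{a_m,b_m\}$ and the coins $\xi_m$ at all steps $m\geq s$. I would base the whole proof on the following structural fact, to be established last: the truncated increment $h_{n,2}(j)=\sum_{m=2}^{t_n-1}h_{j,m}$ is independent of $\mathcal H_{\geq t_n}$, and is distributed as $\sum_{m=2}^{t_n-1}B_m$ with the $B_m$ independent and $B_m\sim\mathrm{Bernoulli}(1/m)$. Taking this for granted, $\E{h_{n,2}(j)}=\sum_{m=2}^{t_n-1}1/m\leq\log t_n$, so Markov's inequality together with $\log t_n=o(\sqrt{\log n})$ when $t_n=\lceil(\log n)^2\rceil$ (resp.\ $\log t_n=o(\sqrt{\log\ell_j})$ under the hypothesis of~\eqref{eq:h2nlab}) gives the unconditional bounds $\P{h_{n,2}(j)\geq\eps\sqrt{\log n}}\to0$ and $\P{h_{n,2}(j)\geq\eps\sqrt{\log\ell_j}}\to0$. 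The remaining task in each case is to transfer these to the conditioned statements, which hinges on the conditioning events being (almost) $\mathcal H_{\geq t_n}$-measurable, hence independent of $h_{n,2}(j)$.

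\emph{The label case.} Here the transfer is immediate. By~\eqref{eq:quantdescr}, $\{\ell_n(i)=\ell_i\}=\{h_{i,\ell_i}=1\}\cap\bigcap_{m>\ell_i}\{h_{i,m}=0\}$, and since $h_{i,m}\in\mathcal H_{\geq m}$ each of these events is $\mathcal H_{\geq\ell_i}$-measurable, so $\{\ell_n(i)=\ell_i,\,i\in[k]\}\in\mathcal H_{\geq\min_i\ell_i}$. Because $\log t_n=o(\sqrt{\log\ell_i})$ forces $t_n=o(\ell_i)$ for every $i$, this lies in $\mathcal H_{\geq t_n}$ for large $n$, hence is independent of $h_{n,2}(j)$, and therefore $\P{h_{n,2}(j)\geq\eps\sqrt{\log\ell_j}\mid\ell_n(i)=\ell_i,\,i\in[k]}=\P{h_{n,2}(j)\geq\eps\sqrt{\log\ell_j}}\to0$, which is~\eqref{eq:h2nlab}.

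\emph{The degree case.} Now $E:=\{d_n(i)\geq d_i,\,i\in[k]\}$ is not $\mathcal H_{\geq t_n}$-measurable in general, since the degree of $i$ may be controlled by selection steps below $t_n$. I would fix $\delta\in(0,2-c)$ and work on $G:=\{\overline\S_{n,1}\in\B\}$: on $G$ every $\S_n(i)$ has at least $(2-\delta)\log n>d_i$ selections in $[t_n,n]$, so the first $d_i$ selection steps of $i$ lie in $[t_n,n]$ and on $G$ the event $E$ coincides with ``the $d_i$ associated coins point inward, for each $i$'', whence $E\cap G,\,G\in\mathcal H_{\geq t_n}$. Using the independence of $h_{n,2}(j)$ from $\mathcal H_{\geq t_n}$,
\be\ba
\P{h_{n,2}(j)\geq\eps\sqrt{\log n},\,E}
&\leq\P{h_{n,2}(j)\geq\eps\sqrt{\log n},\,E\cap G}+\P{E\cap G^c}\\
&=\P{h_{n,2}(j)\geq\eps\sqrt{\log n}}\,\P{E\cap G}+\P{E\cap G^c},
\ea\ee
and it remains to see $\P{E\cap G^c}=o(\P{E})$. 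For that I would combine~\eqref{eq:degprob}, which gives $\P{E\mid\overline\S_{n,1}=\bar J}=2^{-\sum_i d_i}$ for $\bar J\in\A$, with Proposition~\ref{prop:deg}, giving $\P{E}=2^{-\sum_i d_i}(1+o(1))$ uniformly, to obtain $\P{\overline\S_{n,1}=\bar J\mid E}=(1+o(1))\ind_{\{\bar J\in\A\}}\P{\overline\S_{n,1}=\bar J}$; summing over $\bar J\notin\B$ and using $\B\subseteq\A$ (Lemma~\ref{lemma:BinA}) and $\P{\overline\S_{n,1}\notin\B}=o(1)$ (Lemma~\ref{lemma:B}) then gives $\P{G^c\mid E}=o(1)$. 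Dividing the display by $\P{E}$ and using the unconditional bound finishes~\eqref{eq:h2ndeg}.

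\emph{The hard part.} The one substantial point is the structural fact itself. It is delicate because $h_{n,2}(j)$ genuinely depends on the late steps: the index of the tree holding $j$ at a step $m<t_n$ is determined by the merges carried out at all later steps, including steps $\geq t_n$, so its independence from $\mathcal H_{\geq t_n}$ is not visible directly. I would prove it by a backwards reveal over $m=t_n-1,t_n-2,\dots,2$: at step $m$ one has $\P{h_{j,m}=1\mid\mathcal H_{\geq m+1}}=\P{s_{j,m}=1\mid\mathcal H_{\geq m+1}}\cdot\tfrac12=\tfrac2m\cdot\tfrac12=\tfrac1m$, because the pair chosen at step $m$ is uniform and independent of $\mathcal H_{\geq m+1}$ while $T^{(m)}(j)$ is $\mathcal H_{\geq m+1}$-measurable and $\xi_m$ is a fair coin independent of $\mathcal H_{\geq m+1}$; as this conditional probability is a constant, $h_{j,m}$ is independent of $\mathcal H_{\geq m+1}\supseteq\mathcal H_{\geq t_n}\vee\sigma(h_{j,t_n-1},\dots,h_{j,m+1})$, and iterating gives that $(h_{j,m})_{2\leq m<t_n}$ are mutually independent $\mathrm{Bernoulli}(1/m)$ variables, jointly independent of $\mathcal H_{\geq t_n}$. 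Besides this, the only care needed is the bookkeeping that places $E\cap G$ inside $\mathcal H_{\geq t_n}$ in the degree case (close to the argument of~\cite[Lemma~$2.7$]{Esl16}); everything else is routine.
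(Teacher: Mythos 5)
Your proof is correct and, for the label case \eqref{eq:h2nlab}, follows essentially the same route as the paper: independence of $h_{n,2}(j)$ from the conditioning event followed by a Markov bound using $\E{h_{n,2}(j)}=\mathcal O(\log t_n)$. Your careful backwards-reveal verification that $(h_{j,m})_{m<t_n}$ are jointly independent of $\mathcal H_{\geq t_n}$ makes rigorous what the paper states informally (``$h_{n,2}(1)$ is determined by $\S_n(1)\cap[2,t_n-1]$ and its coin flips''), since each $h_{j,m}$ with $m<t_n$ a priori depends on the later merges through the identity of $T^{(m)}(j)$.

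For the degree case \eqref{eq:h2ndeg}, the paper simply cites \cite[Lemma~2.7]{Esl16}, while you supply a self-contained argument; this is the one substantive divergence. Your decomposition over the good event $G=\{\overline\S_{n,1}\in\B\}$ is correct: on $G$ each $|\S_{n,1}(i)|\geq(2-\delta)\log n>d_i$ places the first $d_i$ selection steps of $i$ inside $[t_n,n]$, making $E\cap G$ and $G$ measurable with respect to $\mathcal H_{\geq t_n}$, and the estimate $\P{G^c\mid E}=o(1)$ follows as you say by combining \eqref{eq:degprob}, Proposition~\ref{prop:deg}, Lemma~\ref{lemma:BinA} and Lemma~\ref{lemma:B}. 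So the proposal is a sound, slightly more detailed version of the paper's argument, with the externally cited piece filled in.
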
 
	
	\begin{remark} \label{rem:h2n}
		Assume the truncation sequence $(t_n)_{n\in\N}$ satisfies the assumptions of Lemma~\ref{lemma:h2n}. As a direct consequence of Lemma~\ref{lemma:h2n}, the limiting distributions of 
		\be 
		\frac{h_n(j)-(\log n-d_j/2)}{\sqrt{\log n-d_j/4}}\quad\text{and}\quad \frac{h_{n,1}(j)-(\log n-d_j/2)}{\sqrt{\log n-d_j/4}}
		\ee 
		conditionally on the event $\d_n(i)\geq d_i$ for al $i\in[k]$, with $d_i\leq c\log n$ for all $i\in[k]$, are identical (assuming they both exist), for any $j\in[k]$. This follows from Slutsky's theorem~\cite[Lemma $2.8$]{Vaart00} and since $\sqrt{\log n-d_j/4}=\Theta(\sqrt{\log n})$. Similarly, conditionally on $\ell_n(i)=\ell_i, i\in[k]$ (where the $\ell_i$ diverge as $n\to\infty$), the limiting distributions of 
		\be
		\frac{h_n(j)-\log \ell_j}{\sqrt{\log \ell_j}}\quad \text{and} \quad \frac{h_{n,1}(j)-\log \ell_j}{\sqrt{\log \ell_j}}
		\ee
		are identical (assuming they exist), for any $j\in[k]$. 
	\end{remark} 
	
	\begin{proof} 
		The result in~\eqref{eq:h2ndeg} follows from~\cite[Lemma $2.7$]{Esl16}. 
		
		To prove~\eqref{eq:h2nlab}, we consider $j=1$ only by the exchangeability of the vertices. We first note that $t_n\leq \min_{i\in[k]}\ell_i$ by the assumption on $t_n$ and since the $\ell_i$ diverge with $n$. As a result, the event $\{\ell_n(i)=\ell_i, i\in[k]\}$ is solely dependent on the truncated selection sets $\overline \S_{n,1}$ and the associated coin flips of the truncated selection sets, whereas $h_{n,2}(1)$ is determined by the set $\S_n(1)\cap [2,t_n-1]$ and its associated coin flips. It thus follows that $h_{n,2}(1)$ is independent of the event $\{\ell_n(i)=\ell_i, i\in[k]\}$. The result then follows from the Markov inequality and by the assumption on $t_n$, as 
		\be 
		\mathbb P(h_{n,2}(1)\geq \eps\sqrt{\log \ell_1})\leq \frac{\E{h_{n,2}(1)}}{\eps\sqrt{\log \ell_1}}=\frac{1}{\eps\sqrt{\log \ell_1}}\sum_{j=2}^{t_n-1}\frac 1j=\mathcal O \Big(\frac{\log t_n}{\sqrt{\log \ell_1}}\Big)=o(1),
		\ee 
		by the assumptions on $t_n$, which concludes the proof.
	\end{proof}	
	
	\section{Joint properties of high-degree vertices} \label{sec:deg}
	
	In this section we use the preliminary results proved in Section~\ref{sec:pre} to study the joint behaviour of the depth and label of high-degree vertices.
	
	We set 
	\be\label{eq:hld}
	h:=(\log n-d/2)+y\sqrt{\log n-d/4},\quad \ell:=n\exp(-d/2+x\sqrt{d/4}),\quad t_n:=\lceil (\log n)^2\rceil,
	\ee
	with $x,y\in \R$.  We then have the following result.
	
	\begin{proposition}\label{prop:limit}
		Fix $a\in [0,2)$, let $h,\ell$ and $t_n$ be as in~\eqref{eq:hld}, with $d\in \N_0$, and let $M$ and $N$ be two independent standard normal random variables. When $\limsup_{n\to\infty}d<\infty$,
		\be \label{eq:reswithoutlab}
		\lim_{n\to\infty}\P{h_{n,1}(1)\leq h\,|\, \d_n(1)\geq d}=\Phi(y).
		\ee 
		When, instead, $d$ diverges as $n\to\infty$ such that $\lim_{n\to\infty} d/\log n=a$, 
		\be \ba\label{eq:reswithlab}
		\lim_{n\to\infty}\P{h_{n,1}(1)\leq h, \ell_n(1)\geq \ell\,|\, \d_n(1)\geq d}=\P{M\sqrt{\frac{a}{4-a}}+N\sqrt{1-\frac{a}{4-a}}\leq y, M> x}.
		\ea\ee
	\end{proposition}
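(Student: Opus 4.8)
The plan is to condition on the truncated selection set $\S_{n,1}(1)$, apply the explicit conditional formula of Lemma~\ref{lemma:probonevert} together with the identity $\P{d_n(1)\geq d\mid\S_{n,1}(1)=J}=2^{-d}$ from~\eqref{eq:degprob}, and then evaluate the resulting expectation via central limit theorems. Throughout take $t_n=\lceil(\log n)^2\rceil$, and observe that for $n$ large $\ell\in\Omega_1$ since $\log\ell=\log n-d/2+x\sqrt{d/4}\gg\log t_n$. Set $W_1:=|[\ell,n]\cap\S_{n,1}(1)|=\sum_{\ell\leq j\leq n}s_{1,j}$ and $W_2:=|[t_n,\ell-1]\cap\S_{n,1}(1)|=\sum_{t_n\leq j\leq\ell-1}s_{1,j}$, so that $W_1+W_2=S_{n,1}(1)$; as they involve disjoint ranges of $j$, $W_1$ and $W_2$ are independent sums of independent Bernoulli variables with means $2/j$. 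Since for $J\in\A$ we have $\P{d_n(1)\geq d\mid\S_{n,1}(1)=J}=2^{-d}$ by~\eqref{eq:degprob}, while $\P{d_n(1)\geq d}=2^{-d}(1+o(n^{-\beta}))$ by Proposition~\ref{prop:deg}, the conditional law of $\S_{n,1}(1)$ given $\{d_n(1)\geq d\}$ is its unconditional law restricted to $\A$, up to the factor $1+o(n^{-\beta})$; in particular $\P{\S_{n,1}(1)\in\A}=1-o(n^{-\beta})$. Dividing~\eqref{eq:geql} by $2^{-d}$ and averaging over $\S_{n,1}(1)$ thus gives
\[
\P{h_{n,1}(1)\leq h,\ \ell_n(1)\geq\ell\mid d_n(1)\geq d}=\E{\ind_{\{W_1\geq d+1\}}\,\P{X_{n,\ell,1}+X_{n,\ell,2}\leq h,\ X_{n,\ell,1}\geq1\mid\S_{n,1}(1)}}+o(1),
\]
where conditionally on $\S_{n,1}(1)$ the variables $X_{n,\ell,1}\sim\mathrm{Bin}(W_1-d,1/2)$ and $X_{n,\ell,2}\sim\mathrm{Bin}(W_2,1/2)$ are independent, so $X_{n,\ell,1}+X_{n,\ell,2}\sim\mathrm{Bin}(S_{n,1}(1)-d,1/2)$. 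For~\eqref{eq:reswithoutlab} the same reduction applies, now using that on $\{d_n(1)\geq d,\ S_{n,1}(1)\geq d\}$ the conditional law of $h_{n,1}(1)$ given $\S_{n,1}(1)$ is $\mathrm{Bin}(S_{n,1}(1)-d,1/2)$ (the top $d$ coins along $\S_n(1)$ being heads, the remaining $S_{n,1}(1)-d$ ones of $\S_{n,1}(1)$ being unbiased); this gives the same right-hand side with the indicator and the constraint $\{X_{n,\ell,1}\geq1\}$ removed.

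Next I dispose of the constraint $\{X_{n,\ell,1}\geq1\}$. On $\{W_1\geq d+1\}$ one has $\P{X_{n,\ell,1}=0\mid\S_{n,1}(1)}=2^{-(W_1-d)}$, and since $W_1$ is a sum of independent Bernoullis with $\Var(W_1)=\E{W_1}(1+o(1))=(d-x\sqrt d)(1+o(1))\to\infty$, a Berry--Esseen estimate gives the anticoncentration bound $\sup_m\P{W_1=m}=O(1/\sqrt d)$. Hence $\E{\ind_{\{W_1\geq d+1\}}2^{-(W_1-d)}}\leq O(1/\sqrt d)\sum_{r\geq1}2^{-r}=o(1)$, so the constraint may be dropped at the cost of $o(1)$, and it remains to analyse $\E{\ind_{\{W_1\geq d+1\}}\,g_n(S_{n,1}(1))}$ where $g_n(s):=\P{\mathrm{Bin}(s-d,1/2)\leq h}$.

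I then apply central limit theorems. One computes $\E{W_1}=\sum_{j=\ell}^n 2/j=d-x\sqrt d+o(1)$ and $\E{W_2}=\sum_{j=t_n}^{\ell-1}2/j=2\log\ell-2\log t_n+o(1)=2\log n-d+x\sqrt d+o(\sqrt{\log n})$; since $d\to\infty$ forces $\Var(W_1),\Var(W_2)\to\infty$, the Lindeberg CLT yields $(W_1-\E{W_1})/\sqrt{\Var(W_1)}\toindis M$ and $(W_2-\E{W_2})/\sqrt{\Var(W_2)}\toindis N$ with $M,N$ independent standard normals, and, because $W_1\geq d+1$ is equivalent to $(W_1-\E{W_1})/\sqrt{\Var(W_1)}\geq x+o(1)$, the indicator converges (jointly) to $\ind_{\{M>x\}}$. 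A uniform Berry--Esseen approximation for $\mathrm{Bin}(s-d,1/2)$, valid since $S_{n,1}(1)-d=(2-a+o_{\mathbb{P}}(1))\log n\to\infty$, gives $g_n(S_{n,1}(1))=\Phi\big((2h-S_{n,1}(1)+d)/\sqrt{S_{n,1}(1)-d}\big)+o_{\mathbb{P}}(1)$; substituting $h=(\log n-d/2)+y\sqrt{\log n-d/4}$ and $S_{n,1}(1)=W_1+W_2$, and using $\log t_n=o(\sqrt{\log n})$, a direct computation shows the argument of $\Phi$ converges to $y\sqrt{(4-a)/(2-a)}-N-M\sqrt{a/(2-a)}$. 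As $\ind_{\{W_1\geq d+1\}}g_n(S_{n,1}(1))$ is bounded by $1$, taking expectations along this joint convergence yields, with $N'$ a further independent standard normal,
\[
\P{h_{n,1}(1)\leq h,\ \ell_n(1)\geq\ell\mid d_n(1)\geq d}\ \longrightarrow\ \P{M>x,\ N'+N+M\sqrt{\tfrac{a}{2-a}}\leq y\sqrt{\tfrac{4-a}{2-a}}};
\]
dividing the second inequality by $\sqrt{(4-a)/(2-a)}$, writing $(N'+N)/\sqrt2=\tilde N\sim\mathcal N(0,1)$ (independent of $M$), and using $\tfrac{4-2a}{4-a}=1-\tfrac{a}{4-a}$, identifies the right-hand side with that of~\eqref{eq:reswithlab}. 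The case~\eqref{eq:reswithoutlab} is the same but simpler computation: with $d$ bounded and neither the indicator nor the label present, $\E{g_n(S_{n,1}(1))}\to\E{\Phi(\sqrt2\,y-\widehat S)}=\Phi(y)$, where $\widehat S\sim\mathcal N(0,1)$ is the limit of $(S_{n,1}(1)-\E{S_{n,1}(1)})/\sqrt{2\log n}$.

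The main obstacle is the joint central-limit bookkeeping: the single Gaussian $M$ --- the fluctuation of $W_1=|[\ell,n]\cap\S_{n,1}(1)|$ --- controls both the label event $\{W_1\geq d+1\}$ and, through $S_{n,1}(1)=W_1+W_2$, part of the normalised depth, so one has to track the three independent sources of randomness ($W_1$, $W_2$, and the coin flips behind the binomial) and all of their coefficients in order to arrive precisely at the limiting form $M\sqrt{a/(4-a)}+N\sqrt{1-a/(4-a)}$. The boundary term $\{X_{n,\ell,1}\geq1\}$ near $W_1=d+1$ is a secondary technical point, handled above by anticoncentration.
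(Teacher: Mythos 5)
Your proof is correct and arrives at the same answer, but it organises the argument differently from the paper in two respects worth noting. For the $\{X_{n,\ell,1}\geq 1\}$ constraint, the paper changes the upper limit of the binomial sum to $(Q_n-d)\ind_{\{Q_n-d\geq 1\}}$, splits the resulting probability into $\P{\sum\wt I+\sum I\leq h}-\P{\sum\wt I\leq h}\P{Q_n\leq d}-\E{\ind_{\{Q_n\geq d+1\}}2^{-(Q_n-d)}}$, and evaluates each piece separately via Skorokhod representations, whereas you dispose of the constraint in one stroke by a Berry--Esseen anticoncentration bound $\sup_m\P{W_1=m}=O(1/\sqrt d)$; this is shorter and avoids the paper's two-sided splitting of $Q_n$ around $d+d^{1/2-\eta}$ in~\eqref{eq:neg}. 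For the inner binomial, the paper re-indexes the coins into a single i.i.d.\ array $(\hat I^{O_n}_i)$, uses Skorokhod to track three sources of Gaussian fluctuation simultaneously, and introduces the auxiliary truncated Gaussian $M_x$; you instead absorb the coin-flip fluctuation into the deterministic function $g_n(s)=\P{\text{Bin}(s-d,1/2)\leq h}\approx\Phi(\cdot)$ via a uniform Berry--Esseen bound, which cleanly separates it as an independent standard normal $N'$ at the end, at the cost that the joint-convergence-plus-bounded-convergence step (``taking expectations along this joint convergence'') needs a brief Portmanteau or Skorokhod justification since the indicator is discontinuous --- but the discontinuity set $\{M=x\}$ is null under the limit, so this is routine. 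For the bounded-$d$ case~\eqref{eq:reswithoutlab}, you compute the limit directly (dropping the indicator $\ind_{\{|\S_{n,1}(1)|\geq d\}}$ since it holds w.h.p.\ when $d$ is bounded), whereas the paper replaces $h$ by $h'$ and sandwiches the monotone-in-$d$ integrand between $d=0$ and $d=L$, invoking~\cite[Lemma~2.5]{Esl16} for the two endpoints; your route is self-contained and does not need monotonicity. All claimed intermediate estimates (the asymptotics of $\E{W_1},\E{W_2}$, the convergence of the argument of $\Phi$ to $y\sqrt{(4-a)/(2-a)}-M\sqrt{a/(2-a)}-N$, the algebraic simplification to $M\sqrt{a/(4-a)}+\tilde N\sqrt{1-a/(4-a)}$) check out.
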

	
	\begin{remark}\label{rem:limit}
		$(i)$ As $M\sqrt{a/(4-a)}+N\sqrt{1-a/(4-a)}\sim \CN(0,1)$, the result in~\eqref{eq:reswithlab}, when omitting the event $\ell_n(1)\geq \ell$ (or, equivalently, letting $x\to-\infty$ and setting $a=0$), yields 
		\be 
		\lim_{n\to\infty} \P{h_{n,1}(1)\leq h\,|\, \d_n(1)\geq d}=\Phi(y),
		\ee 
		and hence complements the result in~\eqref{eq:reswithoutlab} in the case that $d$ diverges with $n$ such that $d\leq c\log n$ for some $c\in(0,2)$. Together, they are a generalisation of~\cite[Lemma $2.5$]{Esl16}, where only a parametrised version with $d=\lfloor a\log n\rfloor +b$ and $a\in[0,2),b\in \Z$ is considered.
		
		$(ii)$ Combined with Lemma~\ref{lemma:h2n} and Remark~\ref{rem:h2n}, we obtain that the results in Proposition~\ref{prop:limit} hold when we substitute $h_n(1)$ for $h_{n,1}(1)$ as well.
	\end{remark} 
	
	\begin{proof} 
		We first prove~\eqref{eq:reswithlab} and briefly discuss how to prove~\eqref{eq:reswithoutlab} using~\cite[Lemma $2.5$]{Esl16} at the end. In the setting of~\eqref{eq:reswithlab}, we recall that we assume that $d$ diverges as $n\to\infty$ and $h,\ell$ and $t_n$ are as in~\eqref{eq:hld}. 
		
		Take $c\in(a,2)$. Recall that, conditionally on $\S_{n,1}(1)$, $X_{n,\ell,1}\sim \text{Bin}(|[\ell,n]\cap \S_{n,1}(1)|-d,1/2)$ and $X_{n,\ell,2}\sim \text{Bin}(|[t_n,\ell-1]\cap \S_{n,1}(1)|,1/2)$ (where we set $X_{n,\ell,1}=0, X_{n,\ell,2}=0$ when \\ $|[\ell,n]\cap \S_{n,1}(1)|-d\leq 0$ and $|[t_n,\ell-1]\cap \S_{n,1}(1)|=0$, respectively). By Lemma~\ref{lemma:probonevert}, 
		\be \ba 
		\mathbb P{}&(h_{n,1}\leq h, \ell_n(1)\geq \ell\,|\, \d_n(1)\geq d)\\
		&=\frac{\P{h_{n,1}\leq h, \ell_n(1)\geq \ell,\d_n(1)\geq d}}{\P{\d_n(1)\geq d}}\\
		&=\frac{1}{\P{\d_n(1)\geq d}2^d}\E{\ind_{\{|[\ell,n]\cap \S_{n,1}(1)|\geq d+1\}}\P{X_{n,\ell,1}+X_{n,\ell,2}\leq h, X_{n,\ell,1}\geq 1\,|\, \S_{n,1}(1)}}
		\ea \ee 
		Since $d$ diverges with $n$ and $d\leq c\log n$ (with $c\in (a,2)$), 
		\be 
		\ell=n\exp(-d/2+x\sqrt{d/4})>\lceil (\log n)^2\rceil=t_n
		\ee 
		for all $n$ large. As a result, $X_{n,\ell,2}$ is non-zero with positive probability. 
		
		Since $\P{\d_n(1)\geq d}2^d=1+o(1)$ by Proposition~\ref{prop:deg}, we obtain 
		\be \ba
		\mathbb P({}&h_{n,1}\leq h, \ell_n(1)\geq \ell\,|\, \d_n(1)\geq d)\\
		&=(1+o(1))\E{\ind_{\{|[\ell,n]\cap \S_{n,1}(1)|\geq d+1\}}\P{X_{n,\ell,1}+X_{n,\ell,2}\leq h, X_{n,\ell,1}\geq 1\,|\, \S_{n,1}(1)}}.
		\ea\ee 
		To prove the expected value has the desired limit, we start by rewriting the binomial random variables $X_{n,\ell,1}$ and $X_{n,\ell,2}$. Let $(I_j^n)_{j\in[n],n\in\N}$, $(\wt I_j^n)_{j\in[n], n\in\N}$ be two i.i.d.\ sequences of independent Bernoulli$(1/2)$ random variables. Finally, let $Q_n:=|[\ell,n]\cap \S_{n,1}(1)|$, $\wt Q_n:=|[t_n, \ell-1]\cap \S_{n,1}(1)|$, independent of the $I_j^n, \wt I_j^n$. Then,  
		\be \label{eq:binomassum}
		X_{n,\ell,1}:=\sum_{j=1}^{Q_n-d}I_j^{Q_n-d},\qquad X_{n,\ell,2}:=\sum_{j=1}^{\wt Q_n}\wt I_j^{\wt Q_n}.
		\ee
		Here, we set $X_{n,\ell,1}=0, X_{n,\ell,2}=0$ if $Q_n-d\leq 0,\wt Q_n=0$, respectively. Notice that $Q_n$ and $\wt Q_n$ are independent, that they can be determined from $\S_{n,1}(1)$ and that the values of the $I_j^n,\wt I_j^n$ are independent of $\S_{n,1}(1)$, so that conditioning on $\S_{n,1}(1)$ is equivalent to conditioning on $Q_n,\wt Q_n$. We can then write the expected value in the statement of the proposition as
		\be\ba\label{eq:expprob}
		\mathbb E{}&\left[\ind_{\{Q_n\geq d+1\}}\P{\sum_{j=1}^{\wt Q_n}\wt I_j^{\wt Q_n}+\sum_{j=1}^{Q_n-d}I_j^{Q_n-d}\leq h, \sum_{j=1}^{Q_n-d}I_i^{Q_n-d}\geq 1\,\bigg|\, Q_n,\wt Q_n}\right]\\
		&= \P{\sum_{j=1}^{\wt Q_n}\wt I_j^{\wt Q_n}+\sum_{j=1}^{(Q_n-d)\ind_{\{Q_n-d\geq 1\}}}I_j^{Q_n-d}\leq h, \sum_{j=1}^{(Q_n-d)\ind_{\{Q_n-d\geq 1\}}}I_j^{Q_n-d}\geq 1}.
		\ea\ee 
		The second line follows from the fact that, by changing the upper limits of the second and third sum in the probability on the first line from $Q_n-d$ to $(Q_n-d)\ind_{\{Q_n-d\geq 1\}}$, we can remove the indicator in the expected value. Indeed, if $Q_n\leq d$, then $\ind_{\{Q_n-d\geq 1\}}=0$, and hence the second event in the probability cannot occur almost surely, so that the probability is zero. As a result, the indicator in the expected value is redundant. We thus obtain
		\be\ba \label{eq:expprob2}
		\mathbb P\Bigg({}&\sum_{j=1}^{\wt Q_n}\wt I_j^{\wt Q_n}+\sum_{j=1}^{(Q_n-d)\ind_{\{Q_n-d\geq 1\}}}I_j^{Q_n-d}\leq h\Bigg)-\mathbb P\Bigg( \sum_{j=1}^{\wt Q_n}\wt I_j^{\wt Q_n}\leq h, \sum_{j=1}^{(Q_n-d)\ind_{\{Q_n-d\geq 1\}}}I_j^{Q_n-d}=0\Bigg)\\
		={}&\mathbb P\Bigg(\sum_{j=1}^{\wt Q_n}\wt I_j^{\wt Q_n}+\sum_{j=1}^{(Q_n-d)\ind_{\{Q_n-d\geq 1\}}}I_j^{Q_n-d}\leq h\Bigg)\\
		&-\mathbb P\Bigg(\sum_{j=1}^{\wt Q_n}\wt I_j^{\wt Q_n}\leq h\Bigg)\mathbb P\Bigg(\sum_{j=1}^{(Q_n-d)\ind_{\{Q_n-d\geq 1\}}}I_j^{Q_n-d}=0\Bigg),
		\ea\ee
		where the second step follows from the independence of the two sums in the second probability on the first line. The event
		\be 
		\Bigg\{\sum_{j=1}^{(Q_n-d)\ind_{\{Q_n-d\geq 1\}}}I_j^{Q_n-d} =0\Bigg\}
		\ee 
		occurs either when $Q_n\leq d$ or when, given $Q_n\geq d+1$, $I_1^{Q_n-d}=\ldots= I_{Q_n-d}^{Q_n-d}=0$. Hence,
		\be 
		\mathbb P\Bigg(\sum_{j=1}^{(Q_n-d)\ind_{\{Q_n-d\geq 1\}}}I_j^{Q_n-d}=0\Bigg)=\P{Q_n\leq d}+\E{\ind_{\{Q_n\geq d+1\}}2^{-(Q_n-d)}}.
		\ee
		Combining this with~\eqref{eq:expprob2} yields 
		\be\ba\label{eq:finterm}
		\mathbb P\Bigg({}&\sum_{j=1}^{\wt Q_n}\wt I_j^{\wt Q_n}+\sum_{j=1}^{(Q_n-d)\ind_{\{Q_n-d\geq 1\}}}I_j^{Q_n-d}\leq h\Bigg)-\mathbb P\Bigg(\sum_{j=1}^{\wt Q_n}\wt I_j^{\wt Q_n}\leq h\Bigg)\P{Q_n\leq d}\\
		&+\mathcal O\Big(\E{\ind_{\{Q_n\geq d+1\}}2^{-(Q_n-d)}}\Big).
		\ea \ee
		What remains is to show that the first two terms yield the desired limit and that the last term is negligible compared to the first two. Let us start with the former and tackle the product of two probabilities on the first line. It follows from Lindeberg's conditions~\cite[Theorem $3.4.5$]{Dur19} that 
		\be\label{eq:normconv} 
		\frac{Q_n-\E{Q_n}}{\sqrt{\Var(Q_n)}}\toindis N, \qquad \frac{\wt Q_n-\mathbb E[\wt Q_n]}{\sqrt{\Var(\wt Q_n)}}\toindis \wt N,
		\ee 
		with $N,\wt N\sim \mathcal N(0,1)$ independent standard normal random variables, as we recall that $Q_n$ and $\wt Q_n$ are sums of independent Bernoulli random variables. It is readily checked that by the choice of $\ell$ in~\eqref{eq:hld} and since $d$ diverges with $n$,
		\be\ba  \label{eq:expvar}
		\E{Q_n}&=\sum_{j=\ell}^n \frac2j =2\log(n/\ell)+\mathcal O(1)=d-x\sqrt{d}(1+o(1)), \\
		\Var(Q_n)&=\sum_{j=\ell}^n \frac2j\Big(1-\frac2j\Big)=d-x\sqrt{d}(1+o(1)), \ea\ee 
		and, by the choice of $\ell, d$ and $t_n$,
		\be\ba\label{eq:expvartilde}
		\mathbb E[\wt Q_n]&=\sum_{j=t_n}^{\ell-1}\frac2j=2\log n-d+x\sqrt{d}-(1+o(1))\log\log n, \\
		\Var(\wt Q_n)&=\sum_{j=t_n}^{\ell-1}\frac2j\Big(1-\frac2j\Big)=2\log n-d+x\sqrt{d}-(1+o(1))\log\log n.
		\ea \ee  
		By~\eqref{eq:normconv} and~\eqref{eq:expvar} we thus obtain that
		\be \ba \label{eq:qnlim}
		\P{Q_n\leq d}=\P{\frac{Q_n-\E{Q_n}}{\sqrt{\Var(Q_n)}}\leq \frac{d-\E{Q_n}}{\sqrt{\Var(Q_n)}}}=\P{\frac{Q_n-\E{Q_n}}{\sqrt{\Var(Q_n)}}\leq \frac{x\sqrt{d }(1+o(1))}{\sqrt{d}(1+o(1))}}, 
		\ea\ee 
		which converges to $\Phi(x)$, where we recall that $\Phi:\R\to (0,1)$ denotes the cumulative density function of a standard normal distribution. By Skorokhod's representation theorem~\cite[Theorem $6.7$]{Bill99} there exists a probability space and a coupling of $(Q_n)_{n\in\N}, (\wt Q_n)_{n\in\N}$ and $(I_j^n)_{j\in[n], n\in\N}, (\wt I_j^n)_{j\in[n], n\in\N}$ such that the collections  $(I_j^n)_{j\in\N}, (\wt I_j^n)_{j\in\N}$ are independent of $Q_n$ and $\wt Q_n$ and the convergence in~\eqref{eq:normconv} is almost sure rather than in distribution. In particular, $Q_n/d\toas 1$, $\wt Q_n/(2\log n-d)\toas 1$ and $Q_n,\wt Q_n\toas \infty$. Moreover, it also follows from this representation that 
		\be 
		\frac{2\sum_{j=1}^n I_j^n-n}{\sqrt n}\toas N', \qquad \frac{2\sum_{j=1}^n\wt I_j^n-n}{\sqrt n}\toas N'',
		\ee
		as $n\to\infty$ as well, where $N',N''$ are independent standard normal random variables, also independent of $N,\wt N$ in~\eqref{eq:normconv}. We then rewrite  
		\be \ba \label{eq:qntilde}
		\frac{2\sum_{j=1}^{\wt Q_n}\wt I_j^{\wt Q_n}-(2\log n-d)}{\sqrt{4\log n-d}}={}&\frac{2\sum_{j=1}^{\wt Q_n}\wt I_j^{\wt Q_n}-\wt Q_n}{\sqrt{\wt Q_n}}\sqrt{\frac{\wt Q_n}{2\log n-d}}\sqrt{\frac{2\log n-d}{4\log n-d}}\\
		&+\frac{\wt Q_n-\mathbb E[\wt Q_n]}{\sqrt{\Var(\wt Q_n)}}\sqrt{\frac{\Var(\wt Q_n)}{2\log n-d}}\sqrt{\frac{2\log n-d}{4\log n -d}}\\
		&+\frac{\mathbb E[\wt Q_n]-(2\log n-d)}{\sqrt{d}}\sqrt{\frac{d}{4\log n-d}}.
		\ea \ee 
		Combining this with the Skorokhod representation, the fact that $d/\log n\to a$ and~\eqref{eq:expvartilde}, yields
		\be 
		\frac{2\sum_{j=1}^{\wt Q_n}\wt I_j^{\wt Q_n}-(2\log n-d)}{\sqrt{4\log n-d}}\toindis N_1\sqrt{\frac{2-a}{4-a}}+N_2\sqrt{\frac{2-a}{4-a}}+x\sqrt{\frac{a}{4-a}},
		\ee
		where $N_1,N_2$ are independent standard normal random variables. Combining this with~\eqref{eq:qnlim} and using that $h=\log n-d/2+y\sqrt{\log n-d/4}$, we obtain
		\be \ba \label{eq:indeplim}
		\lim_{n\to\infty}\P{Q_n\leq d}\P{\sum_{j=1}^{\wt Q_n}\wt I_j^{\wt Q_n}\leq h}&=\Phi(x)\P{N_1\sqrt{\frac{2-a}{4-a}}+N_2\sqrt{\frac{2-a}{4-a}}+x\sqrt{\frac{a}{4-a}}\leq y}\\
		&=\Phi(x)\P{N\sqrt{1-\frac{a}{4-a}}+x\sqrt{\frac{a}{4-a}}\leq y},
		\ea \ee 
		where $ N$ is again a standard normal random variable. This deals with the second term of~\eqref{eq:finterm}. For the first term, we observe that 
		\be 
		\P{\frac{(Q_n-d)\ind_{\{Q_n-d\geq 1\}}}{\sqrt{d}}=0}=\P{Q_n\leq d}\to\Phi(x),
		\ee 
		as $n\to\infty$ by~\eqref{eq:qnlim}, and similarly for $z\geq 0$, 
		\be 
		\P{\frac{(Q_n-d)\ind_{\{Q_n-d\geq 1\}}}{\sqrt{d}}> z}=\P{\frac{Q_n-\E{Q_n}}{\sqrt{\Var(Q_n)}}> \frac{d-\E{Q_n}+z\sqrt{d}}{\sqrt{\Var(Q_n)}}}\to 1-\Phi(x+z),
		\ee
		as $n\to\infty$. Hence, for $x\in\R$ fixed, let us define a random variable $M_x:=\ind_{\{M>x\}}(M-x)$, where $M$ is a standard normal random variable. It then follows that $\P{M_x=0}=\Phi(x)$ and $ \P{M_x>z}=\P{M>x+z}=1-\Phi(x+z),z>0$, so that
		\be \label{eq:mxlim}
		\frac{(Q_n-d)\ind_{\{Q_n-d\geq 1\}}}{\sqrt{d}}\toindis M_x.
		\ee 
		By the independence of the Bernoulli random variables $I_j^n,\wt I_j^n$, we can relabel them as a sequence of i.i.d.\ random variables. If we set $O_n:=\wt Q_n+(Q_n-d)\ind_{\{Q_n-d\geq 1\}}$, then we can write them as $( \hat I_j^{O_n})_{j\in[O_n]}$, with $\hat I^{O_n}_j:=\wt I_j^{\wt Q_n}$ if $1\leq j\leq \wt Q_n$ and $\hat I^{O_n}_j:=I_{j-\wt Q_n}^{Q_n-d}$ if $\wt Q_n+1\leq j\leq \wt Q_n+(Q_n-d)\ind_{\{Q_n-d\geq 1\}}$. Again following Lindeberg's conditions, we find that 
		\be 
		\frac{2\sum_{j=1}^n\hat I^n_j -n}{\sqrt n}\toindis N',
		\ee
		where $N'$ is a standard normal random variable. Moreover, $O_n/(2\log n-d)\toinp 1$ by combining~\eqref{eq:normconv}, \eqref{eq:expvartilde} and~\eqref{eq:mxlim}. We can then write
		\be\ba 
		{}&\hspace{-15pt}\frac{2\sum_{j=1}^{\wt Q_n}\wt I_j^{\wt Q_n}+2\sum_{j=1}^{(Q_n-d)\ind_{\{Q_n-d\geq 1\}}}I_j^{Q_n-d}-(2\log n-d)}{\sqrt{4\log n-d}}\\
		={}&\frac{2\sum_{j=1}^{O_n}\hat I^{O_n}_j-O_n}{\sqrt{O_n}}\sqrt{\frac{O_n}{2\log n-d}}\sqrt{\frac{2\log n-d}{4\log n-d}}+\frac{\wt Q_n-\mathbb E[\wt Q_n]}{\sqrt{\Var(\wt Q_n)}}\sqrt{\frac{\Var(\wt Q_n)}{2\log n-d}}\sqrt{\frac{2\log n-d}{4\log n-d}}\\
		&+\frac{(Q_n-d)\ind_{\{Q_n-d\geq 1\}}}{\sqrt{d}}\sqrt{\frac{d}{4\log n-d}}+\frac{\mathbb E[\wt Q_n]-(2\log n-d)}{\sqrt{d}}\sqrt{\frac{d}{4\log n-d}}.
		\ea\ee
		If we let $N,N', N''$ be i.i.d.\ standard normal random variables, independent of $M_x$, and use similar steps as in~\eqref{eq:mxlim} and~\eqref{eq:qntilde} (in particular using the Skorokhod representation for the random variables $(\hat I_j^n)_{j\in[n]}, O_n, (Q_n-d)\ind_{\{Q_n-d\geq 1\}}$ and that $d/\log n\to a$), this converges in distribution to
		\be\ba 
		N'{}&\sqrt{\frac{2-a}{4-a}}+N''\sqrt{\frac{2-a}{4-a}}+M_x\sqrt{\frac{a}{4-a}}+x\sqrt{\frac{a}{4-a}}\\
		&\overset d=  N\sqrt{1-\frac{a}{4-a}}+M_x\sqrt{\frac{a}{4-a}}+x\sqrt{\frac{a}{4-a}},
		\ea\ee 
		Combining this with~\eqref{eq:indeplim} in~\eqref{eq:finterm} yields
		\be \ba\label{eq:main}
		\lim_{n\to\infty}{}&\Bigg[\P{\sum_{j=1}^{\wt Q_n}\wt I_j^{\wt Q_n}+\sum_{j=1}^{(Q_n-d)\ind_{\{Q_n-d\geq 1\}}}I_j^{Q_n-d}\leq h}-\P{Q_n\leq d}\P{\sum_{j=1}^{\wt Q_n}\wt I_j^{\wt Q_n}\leq h}\Bigg]\\
		={}&\P{ M_x\sqrt{\frac{a}{4-a}}+x\sqrt{\frac{a}{4-a}}+N\sqrt{1-\frac{a}{4-a}}\leq y}\\
		&-\Phi(x)\P{N\sqrt{1-\frac{a}{4-a}}+x\sqrt{\frac{a}{4-a}}\leq y}.
		\ea\ee
		By intersecting the event in the first probability on the right-hand side with the complementary events $\{M_x=0\}, \{M_x>0\},$ and using that $M_x$ is independent of $N$, we arrive at 
		\be 
		\P{M_x\sqrt{\frac{a}{4-a}}+x\sqrt{\frac{a}{4-a}}+N\sqrt{1-\frac{a}{4-a}}\leq y, M_x>0}.
		\ee 
		By the definition of $M_x$, it follows that  the event $\{M_x>0\}$ is equivalent to $\{M>x\}$, where we recall that $M$ is a standard normal random variable. Moreover, on the event $\{M_x>0\}=\{M>x\}$, $M_x+x=\ind_{\{M>x\}}(M-x)+x=M$. Thus, we obtain
		\be \label{eq:limit}
		\P{M\sqrt{\frac{a}{4-a}}+N\sqrt{1-\frac{a}{4-a}}\leq y, M>x},
		\ee  
		as desired. Finally, we show that 
		\be \label{eq:neg}
		\lim_{n\to\infty}\E{\ind_{\{Q_n\geq d+1\}}2^{-(Q_n-d)}}=0.
		\ee 
		By splitting the expected value into the cases where $Q_n$ is at most $d+1+\lfloor d^{1/2-\eta}\rfloor$ and at least $d+1+\lceil d^{1/2-\eta}\rceil$, respectively, for some $\eta\in(0,1/2)$, we obtain
		\be \ba
		\mathbb E\Big[\ind_{\{Q_n\geq d+1\}}2^{-(Q_n-d)}\Big]
		={}&  \sum_{m=d+1}^{d+1+\lfloor d^{1/2-\eta}\rfloor}\!\!\!\!\!\!\!\!\!\P{Q_n=m}2^{-(m-d)} +\!\!\!\!\sum_{m\geq d+1+\lceil d^{1/2-\eta}\rceil}\!\!\!\!\!\!\!\!\!\P{Q_n=m}2^{-(m-d)}\\
		\leq{}& \sum_{m=d+1}^{d+1+\lfloor d^{1/2-\eta}\rfloor}\!\!\!\!\!\!\!\!\!\P{Q_n=m}+\!\!\!\!\sum_{m\geq d+1+\lceil d^{1/2-\eta}\rceil} \!\!\!\!\!\!\!\!\!\P{Q_n=m}2^{-d^{1/2-\eta}}\\
		\leq{}&\P{d+1\leq Q_n\leq d+1+\lfloor d^{1/2-\eta}\rfloor}+2^{-d^{1/2-\eta}}.
		\ea \ee 
		Since $d^{1/2-\eta}=o\big(\sqrt{\Var(Q_n)}\big)$ (see~\eqref{eq:expvar}), it follows from~\eqref{eq:normconv} that the probability in the last line converges to zero. This proves~\eqref{eq:neg}, and combining this with the limit~\eqref{eq:limit} of the left-hand side of~\eqref{eq:main} in~\eqref{eq:finterm} yields the desired result and concludes the proof of~\eqref{eq:reswithlab}.
		
		We now discuss the the proof of~\eqref{eq:reswithoutlab}. We recall that now $L:=\limsup_{n\to\infty}d<\infty$. Also, conditionally on $\S_{n,1}(1)$, let $X_n=X_n(d)\sim \text{Bin}(|\S_{n,1}(1)|-d,1/2)$ (where we set $X_n=0$ when $|\S_{n,1}(1)|-d\leq 0$) and let us define $h':=\log n+y\sqrt{\log n}$. Note that  $(h-h')/\sqrt{\log n}=o(1)$ since $L<\infty$, so that using $h'$ instead of $h$ yields the same result. Again using Lemma~\ref{lemma:probonevert} and Proposition~\ref{prop:deg}, we obtain 
		\be \ba 
		\P{h_{n,1}(1)> h'\,|\, \d_n(1)\geq d}&=\frac{\P{h_{n,1}(1)> h', \d_n(1)\geq d}}{\P{\d_n(1)\geq d}}\\
		&=(1+o(1))\mathbb E\big[\ind_{\{|\S_{n,1}(1)|\geq d\}}\P{X_n> h'\,|\, S_{n,1}(1)}\big].
		\ea \ee 
		Notice that, for any $\S_{n,1}(1)\subseteq\Omega_1$, both the indicator as well as the probability are decreasing functions of $d$. As a result, we can bound the expected value from above by setting $d=0$ in the indicator and using $X_n(0)$ in the probability. The upper bound has the desired limit by~\cite[Lemma $2.5$]{Esl16} with $a=b=0$. Similarly, we can bound the expected value from below by setting $d=L$ in the indicator and using $X_n(L)$ in the probability. The result then follows from~\cite[Lemma $2.5$]{Esl16} with $a=0, b=L$, which yields a matching lower bound.
	\end{proof}
	
	To finish this section, we use the results related to the truncated selection sets developed in Section~\ref{sec:pre} to extend Proposition~\ref{prop:limit} to the case of multiple vertices. 
	
	\begin{proposition}\label{prop:condprobmult}
		Fix $k\in\N,(a_i)_{i\in[k]}\in[0,2)^k$. Let $(d_i)_{i\in[k]}$ be $k$ integer-valued sequences such that, for all $i\in[k]$, $\lim_{n\to\infty}d_i/\log n=a_i$. Let $\ell_i:=n\exp(-d_i/2+x_i\sqrt{d_i/4})$ and	$h_i:=(\log n-d_i/2)+y_i\sqrt{\log n-d_i/4}$ with $( x_i)_{i\in[k]},( y_i)_{i\in[k]}\in \R^k$, and set $t_n=\lceil (\log n)^2\rceil$. Then, 
		\be \label{eq:multres}
		\lim_{n\to\infty}\mathbb P(h_{n,1}(i)\leq h_i,i\in[k]\,|\, \d_n(i)\geq d_i,i\in[k])=\prod_{i=1}^k \Phi(y_i).
		\ee 
		If, additionally, $d_i$ diverges as $n\to\infty$ for all $i\in[k]$, let $M$ and $N$ be independent standard normal random variables. Then,
		\be \ba\label{eq:multreswithlab}
		\lim_{n\to\infty}\mathbb P{}&(h_{n,1}(i)\leq h_i, \ell_n(i)\geq \ell_i,i\in[k]\,|\, \d_n(i)\geq d_i,i\in[k])\\
		&=\prod_{i=1}^k \P{M\sqrt{\frac{a_i}{4-a_i}}+N\sqrt{1-\frac{a_i}{4-a_i}}\leq y_i, M>x_i}.
		\ea \ee  
	\end{proposition}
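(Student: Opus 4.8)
The plan is to reduce the $k$-vertex conditional probabilities to a product of single-vertex ones by exploiting the asymptotic independence of the truncated selection sets, and then to invoke Proposition~\ref{prop:limit} factor by factor. Throughout I fix $c\in(\max_{i\in[k]}a_i,2)$ and $\delta\in(0,2-c)$, so that $d_i<(2-\delta)\log n$ for all $n$ large (hence $\B\subseteq\A$ by Lemma~\ref{lemma:BinA}) and $\ell_i=n\exp(-d_i/2+x_i\sqrt{d_i/4})\geq n^{1-c/2-o(1)}\gg(\log n)^2=t_n$ (hence $\ell_i\in\Omega_1$) for $n$ large; also $\P{d_n(i)\geq d_i,i\in[k]}=2^{-\sum_{i\in[k]}d_i}(1+o(1))$ by Proposition~\ref{prop:deg}. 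I would prove~\eqref{eq:multreswithlab} first and then indicate the minor changes for~\eqref{eq:multres}.

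First I would write the conditional probability in~\eqref{eq:multreswithlab} as the ratio of the numerator $\P{h_{n,1}(i)\leq h_i,\ \ell_n(i)\geq\ell_i,\ d_n(i)\geq d_i,\ i\in[k]}$ to the above denominator, and condition the numerator on $\overline\S_{n,1}$. Only $\bar J\in\A$ contribute, and I would split $\A=\B\,\sqcup\,(\A\setminus\B)$. For $\bar J\in\A\setminus\B$ I bound the conditional probability of the full event above by $\P{d_n(i)\geq d_i,i\in[k]\mid\overline\S_{n,1}=\bar J}=2^{-\sum_i d_i}$ (Lemma~\ref{lemma:labeldegprob}, eq.~\eqref{eq:degprob}), so this part contributes at most $2^{-\sum_i d_i}\P{\overline\S_{n,1}\in\B^c}=o(2^{-\sum_i d_i})$ by Lemma~\ref{lemma:B}. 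On $\B$ the sets $(J_i)_{i\in[k]}$ are pairwise disjoint, so Lemma~\ref{lemma:probmultvert} factorises the conditional probability as $\prod_i\P{h_{n,1}(i)\leq h_i,\ell_n(i)\geq\ell_i,d_n(i)\geq d_i\mid\S_{n,1}(i)=J_i}$, while Lemma~\ref{lemma:StoR} replaces $\P{\overline\S_{n,1}=\bar J}$ by $(1+o(1))\prod_i\P{\CR_{n,1}(i)=J_i}$ uniformly over $\B$. Writing $f_i(J)$ for $\P{h_{n,1}(1)\leq h_i,\ell_n(1)\geq\ell_i,d_n(1)\geq d_i\mid\S_{n,1}(1)=J}\,\P{\S_{n,1}(1)=J}$ — which by exchangeability (Corollary~\ref{cor:unifnodes}) equals each factor above times $\P{\CR_{n,1}(i)=J}$ — the $\B$-part equals $(1+o(1))\sum_{\bar J\in\B}\prod_i f_i(J_i)$, and since $\sum_{\bar J\in\Omega_1^k}\prod_i f_i(J_i)=\prod_i\big(\sum_J f_i(J)\big)=\prod_i\P{h_{n,1}(1)\leq h_i,\ell_n(1)\geq\ell_i,d_n(1)\geq d_i}$, the error from restricting to $\B$ is controlled by $\sum_{\bar J\in\B^c}\prod_i f_i(J_i)\leq 2^{-\sum_i d_i}\P{\overline\CR_{n,1}\in\B^c}=o(2^{-\sum_i d_i})$; here I use $\P{d_n(1)\geq d\mid\S_{n,1}(1)=J}\leq\P{G\geq d}=2^{-d}$ from the representation $d_n(1)\overset d=\min\{G,S_n(1)\}$ with $G$ independent of $\S_n(1)$ (Lemma~\ref{lemma:probonevert}), together with $\P{\overline\CR_{n,1}\in\B^c}=o(1)$, which follows by comparing Lemmas~\ref{lemma:StoR} and~\ref{lemma:B}.

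It then remains to identify the single-vertex factor: $\P{h_{n,1}(1)\leq h_i,\ell_n(1)\geq\ell_i,d_n(1)\geq d_i}=\P{h_{n,1}(1)\leq h_i,\ell_n(1)\geq\ell_i\mid d_n(1)\geq d_i}\,\P{d_n(1)\geq d_i}$, which by Proposition~\ref{prop:limit} (second case, with $d=d_i$, $a=a_i$, $h=h_i$, $\ell=\ell_i$) and Proposition~\ref{prop:deg} with $k=1$ equals $\big(p_i+o(1)\big)2^{-d_i}$, where $p_i:=\P{M\sqrt{a_i/(4-a_i)}+N\sqrt{1-a_i/(4-a_i)}\leq y_i,\ M>x_i}$. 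Collecting the pieces, the numerator equals $(1+o(1))\prod_i(p_i+o(1))2^{-d_i}+o(2^{-\sum_i d_i})=2^{-\sum_i d_i}\big(\prod_i p_i+o(1)\big)$, and dividing by the denominator $2^{-\sum_i d_i}(1+o(1))$ gives~\eqref{eq:multreswithlab}. For~\eqref{eq:multres} the same argument applies after deleting the events $\{\ell_n(i)\geq\ell_i\}$ (so the assumption that the $d_i$ diverge is no longer used and $\ell_i\in\Omega_1$ is irrelevant); the single-vertex factor becomes $\P{h_{n,1}(1)\leq h_i\mid d_n(1)\geq d_i}\to\Phi(y_i)$, which holds by~\eqref{eq:reswithoutlab} when $\limsup_n d_i<\infty$ and by Remark~\ref{rem:limit}(i) when $d_i$ diverges with $d_i\leq c\log n$ (and, when $d_i$ is merely unbounded, by passing to subsequences along which $d_i$ is bounded or diverges, both yielding $\Phi(y_i)$).

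The step I expect to be the main obstacle is the control of the error terms rather than any single computation. Because $2^{-\sum_i d_i}$ can be as small as a negative power of $n$, the events $\{\overline\S_{n,1}\in\B^c\}$ and $\{\overline\CR_{n,1}\in\B^c\}$, although of probability $o(1)$, are only negligible next to the numerator because the conditional degree probability on these events is still exactly (respectively, at most) $2^{-\sum_i d_i}$; matching the $o(1)$ factor against the right power of $2$, and checking that the uniformity of the $(1+o(1))$ in Lemma~\ref{lemma:StoR} survives the summation over $\B$, is the delicate bookkeeping. Everything else is a direct assembly of the cited lemmas and of Proposition~\ref{prop:limit}.
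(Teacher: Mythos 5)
Your proposal is correct and follows essentially the same route as the paper's proof: condition on $\overline{\S}_{n,1}$, split into the good set $\B$ (where Lemma~\ref{lemma:probmultvert} factorises the conditional probability and Lemma~\ref{lemma:StoR} lets one pass to the independent copies $\overline{\CR}_{n,1}$) and the exceptional set $\A\setminus\B$ (bounded by $2^{-\sum_i d_i}$ times a vanishing probability), then identify the single-vertex factor via Propositions~\ref{prop:deg} and~\ref{prop:limit}. Your remark about handling~\eqref{eq:multres} when some $d_i$ with $a_i=0$ is neither bounded nor divergent, by passing to subsequences and invoking~\eqref{eq:reswithoutlab} and Remark~\ref{rem:limit}(i) respectively, correctly fills in a detail the paper only indicates by saying ``the proof uses the same steps.''
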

	
	\begin{remark} \label{rem:condprobmult}
		As is the case in Remark~\ref{rem:limit}, it follows from Lemma~\ref{lemma:h2n} and Remark~\ref{rem:h2n} that the result in Proposition~\ref{prop:condprobmult} holds when substituting $h_n(i)$ for $h_{n,1}(i)$ as well.
	\end{remark}
	
	\begin{proof}
		We provide a proof for~\eqref{eq:multreswithlab}, the proof of~\eqref{eq:multres} uses the same steps.
		
		It suffices to prove that 
		\be \ba
		\mathbb P{}&(h_{n,1}(i)\leq h_i, \ell_n(i)\geq \ell_i, \d_n(i)\geq d_i,i\in[k])\\
		&=(1+o(1))2^{-\sum_{i=1}^k d_i}\prod_{i=1}^k \P{M\sqrt{\frac{a_i}{4-a_i}}+N\sqrt{1-\frac{a_i}{4-a_i}}\leq y_i, M>x_i},
		\ea\ee
		since then, by Proposition~\ref{prop:deg}, 
		\be \ba
		\lim_{n\to\infty}\mathbb P{}&(h_{n,1}(i)\leq h_i, \ell_n(i)\geq \ell_i,i\in[k]\,|\, \d_n(i)\geq d_i,i\in[k])\\
		&=\lim_{n\to\infty}\frac{(1+o(1))2^{-\sum_{i=1}^k d_i}\prod_{i=1}^k \P{M\sqrt{\frac{a_i}{4-a_i}}+N\sqrt{1-\frac{a_i}{4-a_i}}\leq y_i, M>x_i}}{\P{\d_n(i)\geq d_i, i\in[k]}}\\
		&=\prod_{i=1}^k \P{M\sqrt{\frac{a_i}{4-a_i}}+N\sqrt{1-\frac{a_i}{4-a_i}}\leq y_i, M>x_i}.
		\ea\ee
		Let us define 
		\be\ba 
		f_n(\bar J):={}&\P{h_{n,1}(i)\leq h_i,\ell_n(i)\geq \ell_i,\d_n(i)\geq d_i, i\in[k]\,|\, \overline{ \mathcal{ S}}_{n,1}=\bar J},\\
		g_n(\bar J):={}&\prod_{i=1}^k\P{h_{n,1}(i)\leq h_i, \ell_n(i)\geq \ell_i, \d_n(i)\geq d_i\,|\, \S_{n,1}(i)=J_i}.
		\ea\ee 
		Then, take $c\in(\max_{i\in[k]}a_i,2)$ and set $\delta:=2-c$ so that $\B\subseteq \A$ by Lemma~\ref{lemma:BinA}. We write,
		\be\ba \label{eq:fngn}
		\mathbb P{}&(h_{n,1}(i)\leq h_i, \ell_n(i)\geq \ell_i,\deg_n(i)\geq d_i , i\in[k])\\
		&=\E{f_n(\overline \S_{n,1})}\\
		&=\E{f_n(\overline \S_{n,1})\ind_{\{\overline \S_{n,1}\in \B\}}}+\E{f_n(\overline \S_{n,1})\ind_{\{\overline \S_{n,1} \in\A\backslash\B\}}}.
		\ea \ee 
		For the first term on the right-hand side, we use that the truncated selection sets are pairwise disjoint by the definition of $\B$ in~\eqref{eq:AB} and that by Lemma~\ref{lemma:probmultvert}, $f_n(\bar J)=g_n(\bar J)$ for all $\bar J\in \B$ and $n$ sufficiently large as a result. Together with Lemma~\ref{lemma:StoR}, recalling that $\overline \CR_{n,1}$ is a tuple of $k$ independent copies of $\S_{n,1}(1)$, this yields
		\be\ba \label{eq:fngn2}
		\E{f_n(\overline \S_{n,1})\ind_{\{\overline \S_{n,1}\in \B\}}}&=\sum_{\bar J\in \B}f_n(\bar J)\P{\overline{ \mathcal{ S}}_{n,1}=\bar J}\\
		&=\sum_{\bar J\in \B}g_n(\bar J)\P{\overline \CR_{n,1}=\bar J}(1+o(1))\\
		&=\E{g_n(\overline \CR_{n,1})\ind_{\{\overline \CR_{n,1}\in \B\}}}(1+o(1)).
		\ea\ee 
		Moreover, since $f_n(\bar J),g_n(\bar J)\leq 2^{-\sum_{i=1}^k d_i}$ when $\bar J\in \A$ by~\eqref{eq:degprob} in Lemma~\ref{lemma:labeldegprob}, and using Lemmas~\ref{lemma:B} and~\ref{lemma:StoR},
		\be\ba \label{eq:snrn}
		\Big|{}&\E{f_n(\overline \S_{n,1})\ind_{\{\overline \S_{n,1} \in\A\backslash\B\}}}-\E{g_n(\overline \CR_{n,1})\ind_{\{\overline \CR_{n,1} \in\A\backslash\B\}}}\Big|\\
		&\leq 2^{-\sum_{i=1}^k d_i}(\P{\overline \S_{n,1} \in\A\backslash\B}+\P{\overline \CR_{n,1} \in\A\backslash\B})\\
		&=o\Big(2^{-\sum_{i=1}^k d_i}\Big).
		\ea\ee 
		Thus, combining~\eqref{eq:fngn}, \eqref{eq:fngn2} and~\eqref{eq:snrn}, we arrive at
		\be\ba \label{eq:probgn}
		\mathbb P(h_{n,1}(i)\leq h_i, \ell_n(i)\geq \ell_i,\deg_n(i)\geq d_i , i\in[k])=\E{g_n(\overline \CR_{n,1})}(1+o(1))+o\Big(2^{-\sum_{i=1}^k d_i}\Big).
		\ea\ee 
		As the elements of $\overline \CR_{n,1}$ are i.i.d., we obtain 
		\be \ba
		\mathbb E[g_n(\overline \CR_{n,1})]&=\prod_{i=1}^k \P{h_{n,1}(i)\leq h_i, \ell_n(i)\geq \ell_i, \d_n(i)\geq d_i}\\
		&=\prod_{i=1}^k \P{h_{n,1}(i)\leq h_i, \ell_n(i)\geq \ell_i\,|\, \d_n(i)\geq d_i}\P{\d_n(i)\geq d_i}.
		\ea \ee 
		By combining Proposition~\ref{prop:deg}, Proposition~\ref{prop:limit} and~\eqref{eq:probgn}, we thus have 
		\be \ba
		\mathbb P{}&(h_{n,1}(i)\leq h_i, \ell_n(i)\geq \ell_i,\deg_n(i)\geq d_i , i\in[k])\\
		&=(1+o(1))2^{-\sum_{i=1}^k d_i}\prod_{i=1}^k \P{M\sqrt{\frac{a_i}{4-a_i}}+N\sqrt{1-\frac{a_i}{4-a_i}}\leq y_i, M>x_i}, 
		\ea\ee 
		as desired, which concludes the proof.
	\end{proof}
	
	\section{Joint properties of vertices with a given label}\label{sec:lab}
	
	This section is devoted to studying the joint behaviour of the degree and depth of vertices with a given label. We use the preliminary results proved in Section~\ref{sec:pre} to obtain the required results. The section is structured in the same way as Section~\ref{sec:deg}.

	We let $\ell\in[n]$ be increasing in $n$ such that $\ell$ diverges as $n\to\infty$, and set 
	\be \label{eq:hdt}
	h:=\log \ell+x\sqrt{\log \ell}, \quad \begin{cases} d:=\log(n/\ell)+y\sqrt{\log(n/\ell)} & \mbox{if } \ell=o(n),\\
		d\in\N_0\text{ fixed} & \mbox{otherwise,}
	\end{cases}\quad t_n:=\lceil \log \ell\rceil ,
	\ee 
	with $x,y\in\R$. Moreover, we define, for the same $y\in\R$ used in the definition of $d$, $\rho\in(0,1)$ and with $P(\rho)\sim \text{Poi}(\log(1/\rho))$, 
	\be \label{eq:pr}
	\text{Pr}=\text{Pr}(y,\rho,\ell):=\begin{cases}
		\Phi(y) & \mbox{if } \ell=o(n),\\
		\P{P(\rho)\leq d} & \mbox{if } \ell=(1+o(1))\rho n, \\
		1 & \mbox{if } \ell=n-o(n).
	\end{cases}
	\ee 
	We then have the following result.	
	
	\begin{proposition}\label{prop:limitfixell}
		Let $d,h,\ell$ and $t_n$ be as in~\eqref{eq:hdt} with $x,y\in \R$, and recall $\mathrm{Pr}$ from~\eqref{eq:pr}, with $y\in \R,\rho \in (0,1)$. Then,
		\be 
		\lim_{n\to\infty} \P{h_{n,1}(1)\leq h, \d_n(1)\leq d\,|\, \ell_n(1)=\ell}=\Phi(x)\mathrm{Pr}.
		\ee 
	\end{proposition}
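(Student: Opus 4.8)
The plan is to reduce the conditional probability to an exact expression and then separate the contributions of the depth and of the degree, which turn out to be asymptotically independent. First I would write $\P{h_{n,1}(1)\le h,\,d_n(1)\le d\mid\ell_n(1)=\ell}$ as the ratio of $\P{h_{n,1}(1)\le h,\,d_n(1)\le d,\,\ell_n(1)=\ell}$ to $\P{\ell_n(1)=\ell}$, the denominator being $1/n$ by \eqref{eq:labelprob} with $k=1$. Since $\ell$ diverges and $t_n=\lceil\log\ell\rceil$, for $n$ large we have $t_n\le\ell\le n$, so $\ell\in\Omega_1$ and $\{\ell_n(1)=\ell\}$ is determined by the truncated coalescent; hence I would condition on $\S_{n,1}(1)$ and invoke \eqref{eq:eql} of Lemma~\ref{lemma:probonevert}. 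Writing $A_n:=|[\ell+1,n]\cap\S_{n,1}(1)|=\sum_{j=\ell+1}^n s_{1,j}$ and $B_n:=|[t_n,\ell-1]\cap\S_{n,1}(1)|=\sum_{j=t_n}^{\ell-1}s_{1,j}$, and noting $\ind_{\{\ell\in\S_{n,1}(1)\}}=s_{1,\ell}$, the numerator becomes $\E{\ind_{\{A_n\le d\}}\,s_{1,\ell}\,2^{-A_n-1}\,\P{X_{n,\ell,2}\le h-1\mid\S_{n,1}(1)}}$ with $X_{n,\ell,2}\mid\S_{n,1}(1)\sim\mathrm{Bin}(B_n,1/2)$. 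The three index ranges $[\ell+1,n]$, $\{\ell\}$ and $[t_n,\ell-1]$ are disjoint, so $A_n$, $s_{1,\ell}$ and $B_n$ are independent, and pairing each selection with its fair coin shows that, unconditionally, $X_{n,\ell,2}$ has the law of $Z_n:=\sum_{j=t_n}^{\ell-1}\beta_j$ with $\beta_j\sim\mathrm{Ber}(1/j)$ independent. Using $\E{s_{1,\ell}}=2/\ell$ and factoring, this yields the exact identity
\be
\P{h_{n,1}(1)\le h,\,d_n(1)\le d\mid\ell_n(1)=\ell}=\frac{n}{\ell}\,\E{\ind_{\{A_n\le d\}}2^{-A_n}}\cdot\P{Z_n\le h-1}.
\ee

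For the second factor I would rerun the argument from the proof of Proposition~\ref{prop:limit} (or quote \cite[Lemma~$2.5$]{Esl16}): with $t_n=\lceil\log\ell\rceil$ one has $\E{Z_n}=\sum_{j=t_n}^{\ell-1}1/j=\log\ell+O(\log\log\ell)$ and $\Var(Z_n)=\log\ell\,(1+o(1))$, so Lindeberg's central limit theorem gives $(Z_n-\log\ell)/\sqrt{\log\ell}\toindis\CN(0,1)$; since $h-1=\log\ell+x\sqrt{\log\ell}-1$ this gives $\P{Z_n\le h-1}\to\Phi(x)$.

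The first factor is where the three regimes defining $\mathrm{Pr}$ enter, and the key point is an exponential-tilting identity. Because $\E{2^{-A_n}}=\prod_{j=\ell+1}^n\bigl(1-\tfrac2j+\tfrac2j\cdot\tfrac12\bigr)=\prod_{j=\ell+1}^n\bigl(1-\tfrac1j\bigr)=\ell/n$ by telescoping, the quantity $\tfrac{n}{\ell}2^{-A_n}$ is a likelihood ratio and $\tfrac{n}{\ell}\E{\ind_{\{A_n\le d\}}2^{-A_n}}=\widehat{\mathbb P}(A_n\le d)$ for the tilted measure $\widehat{\mathbb P}$ under which $(s_{1,j})_{j=\ell+1}^n$ are independent with $\widehat{\mathbb P}(s_{1,j}=1)=1/(j-1)$; so under $\widehat{\mathbb P}$ the variable $A_n$ is Poisson--binomial with parameters $(1/(j-1))_{j=\ell+1}^n$ and $\widehat{\mathbb E}[A_n]=\sum_{k=\ell}^{n-1}1/k=\log(n/\ell)+O(1/\ell)$, its variance being of the same order. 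If $\ell=o(n)$, then $\log(n/\ell)\to\infty$, Lindeberg's theorem applies to $A_n$ under $\widehat{\mathbb P}$, and since $d=\log(n/\ell)+y\sqrt{\log(n/\ell)}$ we get $\bigl(d-\widehat{\mathbb E}[A_n]\bigr)/\sqrt{\log(n/\ell)}\to y$, hence $\widehat{\mathbb P}(A_n\le d)\to\Phi(y)$. If $\ell=(1+o(1))\rho n$ with $\rho\in(0,1)$, then $\widehat{\mathbb E}[A_n]\to\log(1/\rho)$ while $\sum_{k=\ell}^{n-1}1/k^2=O(1/\ell)\to0$, so Le Cam's inequality gives $\dTV\bigl(A_n,\mathrm{Poi}(\log(1/\rho))\bigr)\to0$ under $\widehat{\mathbb P}$ and, $d$ being a fixed integer, $\widehat{\mathbb P}(A_n\le d)\to\P{\mathrm{Poi}(\log(1/\rho))\le d}$. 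If $\ell=n-o(n)$, then $\widehat{\mathbb E}[A_n]\to0$, so $\widehat{\mathbb P}(A_n\ge1)\le\widehat{\mathbb E}[A_n]\to0$ and $\widehat{\mathbb P}(A_n\le d)\to1$. In every case the first factor converges to $\mathrm{Pr}$, and multiplying by $\Phi(x)$ from the previous step gives $\Phi(x)\,\mathrm{Pr}$.

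I expect the main obstacle to be this third step: spotting the tilting identity $\E{2^{-A_n}}=\ell/n$ and checking that under the tilted law $A_n$ becomes the clean Poisson--binomial variable above, so that the three mutually exclusive asymptotic regimes for $\mathrm{Pr}$ all drop out of a single computation, via a central limit theorem, a Poisson approximation, and a first-moment bound respectively. The remaining ingredients — that $\{\ell_n(1)=\ell\}$ is visible in the truncated coalescent since $\ell\ge t_n$ for large $n$, the factorisation in the first step, and the central limit theorem for $Z_n$ — are routine, the last being essentially a rerun of the argument in the proof of Proposition~\ref{prop:limit}.
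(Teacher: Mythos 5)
Your proof is correct, and it follows the same overall skeleton as the paper's argument (Bayes' rule with $\P{\ell_n(1)=\ell}=1/n$, the exact conditional formula \eqref{eq:eql}, factorisation over the three disjoint index ranges $[\ell+1,n]$, $\{\ell\}$, $[t_n,\ell-1]$, then separate asymptotics for the depth and degree factors). The genuine difference is in the degree factor. You observe the clean telescoping identity $\E{2^{-A_n}}=\prod_{j=\ell+1}^n(1-1/j)=\ell/n$ and then perform an exponential tilt: $\tfrac n\ell\,2^{-A_n}$ is a likelihood ratio, and under the tilted law the $(s_{1,j})$ remain independent with parameters $1/(j-1)$, so $\tfrac n\ell\E{\ind_{\{A_n\le d\}}2^{-A_n}}=\widehat{\P}(A_n\le d)$ is \emph{exactly} a Poisson-binomial tail probability. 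The three asymptotic regimes for $\mathrm{Pr}$ then all fall out of this single clean object (CLT, Le Cam, first moment). The paper instead expands the same expectation into a sum over subsets, pulls out $(\ell-1)/(n-1)$ by a partial telescoping, and is left with a residual multiplicative perturbation $\prod_{j\in\wt{\S}}(1+\tfrac1{j-2})$ applied to an independent-$\mathrm{Ber}(1/j)$ random set $\wt{\S}_{n,1}(1)$, which it has to squeeze between $1$ (trivial lower bound) and a Cauchy--Schwarz estimate that makes the perturbation vanish as $\ell\to\infty$; the Poisson case is then handled by MGF convergence rather than Le Cam. Your tilting buys an exact identity with no residual error term and hence no upper/lower-bound sandwich, which is tidier; the paper's route is more elementary in that it never introduces a change of measure. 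Both are sound, and your analyses of $Z_n$ (CLT with $\E{Z_n}=\log\ell+O(\log\log\ell)$ absorbed by the $\sqrt{\log\ell}$ scaling) and of the three cases for $\widehat{\P}(A_n\le d)$ are correct as written.
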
 
	
	\begin{remark} 
		As is the case in Remark~\ref{rem:limit}, it follows from Lemma~\ref{lemma:h2n} and Remark~\ref{rem:h2n} that the result in Proposition~\ref{prop:limitfixell} holds when substituting $h_n(i)$ for $h_{n,1}(i)$ as well.
	\end{remark}
	
	\begin{proof}
		We start by using Lemmas~\ref{lemma:probonevert} and~\ref{lemma:labeldegprob} to obtain 
		\be\ba 
		\mathbb P{}&(h_{n,1}(1)\leq h, \d_n(1)\leq d\,|\, \ell_n(1)=\ell)\\
		&=\frac{\P{h_{n,1}(1)\leq h, \d_n(1)\leq d, \ell_n(1)=\ell}}{\P{ \ell_n(1)=\ell}}\\
		&=n\mathbb E\Big[2^{-([\ell+1,n]\cap \S_{n,1}(1)|+1)}\ind_{\{|[\ell+1,n]\cap \S_{n,1}(1)|\leq d\}}\ind_{\{\ell\in \S_{n,1}\}} \mathbb P\big(X_{n,\ell,2}(1)\leq h-1\big| \S_{n,1}(1)\big)\Big].
		\ea \ee  
		We observe that we can divide the terms in the expected value into three parts which are pairwise independent. Namely, the exponent and the first indicator, the second indicator, and finally the conditional probability, respectively. Indeed, the exponent and first indicator only depend on $[\ell+1,n]\cap \S_{n,1}(1)$, the second indicator only on the event $\{\ell\in \S_{n,1}(1)\}$, and the conditional probability depends only on $[t_n,\ell-1]\cap \S_{n,1}(1)$. Since $\ell>\lceil\log \ell\rceil=t_n$ for all $n$ sufficiently large, these three parts depend on disjoint sets of independent random variables and are hence independent. As a result, we obtain 
		\be \ba \label{eq:firststep}
		\mathbb E\big[{}&2^{-([\ell+1,n]\cap \S_{n,1}(1)|+1)}\ind_{\{|[\ell+1,n]\cap \S_{n,1}(1)|\leq d\}}\ind_{\{\ell\in \S_{n,1}\}} \P{X_{n,\ell,2}(1)\leq h-1\,|\, \S_{n,1}(1)}\big]\\
		={}&\frac 12 \P{\ell\in \S_{n,1}(1)}\E{2^{-|[\ell+1,n]\cap \S_{n,1}(1)|}\ind_{\{|[\ell+1,n]\cap \S_{n,1}(1)|\leq d\}}}\P{X_{n,\ell,2}(1)\leq h-1}.
		\ea \ee 
		The first probability on the right-hand side equals $2/\ell$. The expected value on the right-hand side can be rewritten as follows. First, by summing over all possible truncated selection sets $\S_{n,1}(1)$, 
		\be \ba 
		\mathbb E\Big[{}&2^{-|[\ell+1,n]\cap \S_{n,1}(1)|}\ind_{\{|[\ell+1,n]\cap \S_{n,1}(1)|\leq d\}}\Big]\\
		&=\sum_{m=0}^d \sum_{\substack{S\subseteq [\ell+1,n]\\ |S|=m}}2^{-m}\prod_{j\in S}\frac 2j\prod_{j\not\in S}\Big(1-\frac 2j\Big)\\
		&=\sum_{m=0}^d \sum_{\substack{S\subseteq \{\ell+1,\ldots, n\}\\ |S|=m}} \prod_{j\in S}\frac 1j \prod_{j\not\in S}\Big(1-\frac 1j\Big) \prod_{j\not \in S}\frac{j-2}{j-1}\\
		&=\frac{\ell-1}{n-1}\sum_{m=0}^d \sum_{\substack{S\subseteq \{\ell+1,\ldots, n\}\\ |S|=m}} \prod_{j\in S}\frac{j-1}{j-2}\prod_{j\in S}\frac 1j\prod_{j\not\in S}\Big(1-\frac 1j\Big).
		\ea \ee 
		As $\ell$ diverges with $n$, $(\ell-1)/(n-1)=(1+o(1))\ell/n$. Defining $\wt \S_{n,1}(1)$ as a random subset of $\{\ell+1,\ldots ,n\}$ which includes each integer $j\in \{\ell+1,\ldots, n\}$ independently with probability $1/j$, the double sum and triple product can be interpreted as
		\be 
		\E{\ind_{\{|\wt \S_{n,1}(1)|\leq d\}}\prod_{j=\ell+1}^n \Big(1+\ind_{\{j\in \wt \S_{n,1}(1)\}}\frac{1}{j-2}\Big)}.
		\ee 
		Combining both observations we obtain 
		\be \ba \label{eq:tildeexp}
		\mathbb E\Big[{}&2^{-|[\ell+1,n]\cap \S_{n,1}(1)|}\ind_{\{|[\ell+1,n]\cap \S_{n,1}(1)|\leq d\}}\Big]\\
		&=(1+o(1))\frac \ell n \E{\ind_{\{|\wt \S_{n,1}(1)|\leq d\}}\prod_{j=\ell+1}^n \Big(1+\ind_{\{j\in \wt \S_{n,1}(1)\}}\frac{1}{j-2}\Big)}.
		\ea \ee 
		By bounding the product from below by one and using~\eqref{eq:firststep}, we obtain the lower bound
		\be \ba\label{eq:lower}
		n\mathbb E\big[{}&2^{-([\ell+1,n]\cap \S_{n,1}(1)|+1)}\ind_{\{|[\ell+1,n]\cap \S_{n,1}(1)|\leq d\}}\ind_{\{\ell\in \S_{n,1}\}} \P{X_{n,\ell,2}(1)\leq h-1\,|\, \S_{n,1}(1)}\big]\\
		&\geq (1+o(1))\P{|\wt \S_{n,1}(1)|\leq d}\P{X_{n,\ell,2}(1)\leq h-1}\\
		&=\Phi(x)\text{Pr}+o(1), 
		\ea\ee 
		where the last step follows if we assume that the two probabilities in the first step are asymptotically equal to $\text{Pr}$ and $\Phi(x)$, respectively. For an upper bound, we first expand the product in the expected value of~\eqref{eq:tildeexp} to obtain   
		\be\ba \label{eq:expand}
		\mathbb E\Big[{}&2^{-|[\ell+1,n]\cap \S_{n,1}(1)|}\ind_{\{|[\ell+1,n]\cap \S_{n,1}(1)|\leq d\}}\Big]\\
		={}&(1+o(1))\frac \ell n \Bigg( \P{|\wt \S_{n,1}(1)|\leq d}\\
		&+\sum_{m=1}^{n-\ell}\sum_{\ell+1\leq j_1<\ldots <j_m\leq n}\Big(\prod_{t=1}^m \frac {1}{j_t-2}\Big)\E{\ind_{\{|\wt \S_{n,1}(1)|\leq d\}}\prod_{t=1}^m \ind_{\{j_t\in \wt \S_{n,1}(1)\}}}\Bigg).
		\ea\ee 
		We then use the Cauchy-Schwarz inequality to bound 
		\be \ba 
		\sum_{m=1}^{n-\ell}{}&\sum_{\ell+1\leq j_1<\ldots <j_m\leq n}\Big(\prod_{t=1}^m \frac {1}{j_t-2}\Big)\E{\ind_{\{|\wt \S_{n,1}(1)|\leq d\}}\prod_{t=1}^m \ind_{\{j_t\in \wt \S_{n,1}(1)\}}}\\
		&\leq \P{|\wt \S_{n,1}(1)|\leq d}^{1/2}\sum_{m=1}^{n-\ell}\sum_{\ell+1\leq j_1<\ldots <j_m\leq n}\prod_{t=1}^m \bigg(\frac {1}{j_t-2} \P{j_t\in \wt \S_{n,1}(1)}^{1/2}\bigg).
		\ea \ee 
		As $\mathbb P(j_t\in \wt \S_{n,1}(1))=1/j_t\leq 1/(j_t-2)$, we arrive at the upper bound
		\be \ba 
		\P{|\wt \S_{n,1}(1)|\leq d}^{1/2}\sum_{m=1}^{n-\ell}{}&\sum_{\ell-1\leq j_1<\ldots <j_m\leq n-2}\prod_{t=1}^m \frac {1}{j_t^{3/2}}\\
		\leq \P{|\wt \S_{n,1}(1)|\leq d}^{1/2}{}&\sum_{m=1}^{n-\ell}\big(2(\ell-2)^{-1/2}\big)^m\\
		\leq  \P{|\wt \S_{n,1}(1)|\leq d}^{1/2}{}&\frac{2}{(\ell-2)^{1/2}-2}.
		\ea \ee 
		Combining this with~\eqref{eq:expand} and~\eqref{eq:firststep} and since $\ell$ diverges with $n$, we thus obtain the upper bound 
		\be \ba 
		n\mathbb E\big[{}&2^{-([\ell+1,n]\cap \S_{n,1}(1)|+1)}\ind_{\{|[\ell+1,n]\cap \S_{n,1}(1)|\leq d\}}\ind_{\{\ell\in \S_{n,1}\}} \P{X_{n,\ell,2}(1)\leq h-1\,|\, \S_{n,1}(1)}\big]\\
		\leq{}& (1+o(1))\Big( \P{|\wt \S_{n,1}(1)|\leq d}+\P{|\wt \S_{n,1}(1)|\leq d}^{1/2}\frac{2}{(\ell-2)^{1/2}+2}\Big)\P{X_{n,\ell,2}(1)\leq h-1}\\
		={}&\Phi(x)\text{Pr}+o(1), 
		\ea \ee 
		when we (again) assume that the first and last probability on the second line are asymptotically equal to $\text{Pr}$ and $\Phi(x)$, respectively. As this upper bound matches the lower bound in~\eqref{eq:lower}, we arrive at the desired result.
		
		It remains to prove that 
		\be \label{eq:finproof}
		\P{|\wt \S_{n,1}(1)|\leq d}=\text{Pr}+o(1), \qquad \P{X_{n,\ell,2}(1)\leq h-1}=\Phi(x)+o(1).
		\ee 
		For the first result, let us start by considering $\ell=o(n)$, so that $d:=\log(n/\ell)+y\sqrt{\log (n/\ell)}$ for $y\in\R$ fixed. It then follows from Lindeberg's conditions~\cite[Theorem $3.4.5$]{Dur19} that 
		\be \ba 
		\P{|\wt \S_{n,1}(1)|\leq d}&=\P{\frac{|\wt \S_{n,1}(1)|-\mathbb E[|\wt \S_{n,1}(1)|]}{\sqrt{\Var(|\wt \S_{n,1}(1)|)}}\leq\frac{\log(n/\ell)+y\sqrt{\log(n/\ell)}-\mathbb E[|\wt \S_{n,1}(1)|]}{\sqrt{\Var(|\wt \S_{n,1}(1)|)}}} \\
		&=\Phi(y)+o(1), 
		\ea \ee 
		since 
		\be \ba
		\mathbb E[|\wt \S_{n,1}(1)|]&=\sum_{j=\ell+1}^n \frac 1j=\log(n/\ell)+\CO(1),\qquad \\
		\Var( |\wt \S_{n,1}(1)|)&=\sum_{j=\ell+1}^n \frac 1j\Big(1-\frac 1j\Big)=\log(n/\ell)+\CO(1),
		\ea \ee 
		when $\ell=o(n)$. When $\ell=(1+o(1))\rho n$ for some $\rho\in(0,1)$, we recall that $d\in \N_0$ is fixed, and instead use that for any $t\in\R$,
		\be 
		\E{\e^{t|\wt \S_{n,1}(1)|}}=\prod_{j=\ell+1}^n \Big(1-\frac 1j+\e^t\frac 1j\Big)=\prod_{j=\ell+1}^n \Big(1+\big(\e^t-1\big)\frac 1j\Big).
		\ee 
		Using that $x-x^2\leq \log(1+x)\leq x$ for all $x>0$ and that 
		\be 
		\sum_{j=\ell+1}^n \frac 1j=(1+o(1))\int_\rho^1 x^{-1}\,\d x=(1+o(1))\log(1/\rho), 
		\ee 
		yields
		\be 
		\E{\e^{t|\wt \S_{n,1}(1)|}}=\e^{(\e^t-1)\log(1/\rho)}+o(1).
		\ee
		Since, for any $t\in\R$, the moment generating function (MGF) of $|\wt \S_{n,1}(1)|$ converges to the MGF of $P(\rho)$,  
		\be 
		\P{|\wt \S_{n,1}(1)|\leq d}=\P{P(\rho)\leq d}+o(1). 
		\ee 
		Finally, when $\ell=n-o(n)$, using Markov's inequality yields
		\be 
		\P{|\wt \S_{n,1}(1)|=0}\geq 1-\mathbb E[|\wt \S_{n,1}(1)|]=1-\sum_{j=\ell+1}^n \frac 1j=1-(1+o(1))\log(n/\ell)=1-o(1), 
		\ee 
		as desired.
		
		For the latter result in~\eqref{eq:finproof} we set $\wt Q_n:=|[t_n, \ell-1]\cap \S_{n,1}(1)|$, let $(I^n_j)_{j\in[n]}$ denote independent Bernoulli random variables with success probability $1/2$, also independent of $\wt Q_n$, and write 
		\be 
		\frac{2X_{n,\ell,2}(1)-2\log \ell}{2\sqrt{\log \ell}}=\frac{2\sum_{j=1}^{\wt Q_n}I^{\wt Q_n}_j-\wt Q_n}{\sqrt{\wt Q_n}}\sqrt{\frac{\wt Q_n}{4\log \ell}}+\frac{\wt Q_n-\mathbb E[\wt Q_n]}{\sqrt{\Var(\wt Q_n)}}\sqrt{\frac{\Var(\wt Q_n)}{4\log \ell}}+\frac{\mathbb E[\wt Q_n]-2\log \ell}{2\sqrt{\log \ell}}.
		\ee  
		We then use a similar approach as~\eqref{eq:qntilde}. In particular, we use the Skorokhod embedding which provides us with a coupling of the random variables $\wt Q_n$ and $(I_i^n)_{\inn}$ such that
		\be
		\frac{2\sum_{j=1}^n I^n_j-n}{\sqrt n}\toas N_1, \qquad \frac{\wt Q_n-\mathbb E[\wt Q_n]}{\sqrt{\Var(\wt Q_n)}}\toas N_2, 
		\ee 
		where $N_1,N_2$ are two independent standard normal random variables. Moreover, a straightforward computation of $\mathbb E[\wt Q_n]$ and $\Var(\wt Q_n)$ shows that $\wt Q_n/(2\log \ell) \toas 1$ (and hence $\wt Q_n\toas \infty$), that $\Var(\wt Q_n)/(2\log \ell)\to 1$ and that $\mathbb E[\wt Q_n]-2\log\ell=o(\sqrt{\log\ell})$ as $n\to\infty$. As a result, it follows that 
		\be 
		\frac{2X_{n,\ell,2}(1)-2\log \ell}{2\sqrt{\log \ell}}\toindis \frac{1}{\sqrt{2}} N_1 +\frac{1}{\sqrt{2}} N_2\overset d= N, 
		\ee 
		where $N$ is a standard normal random variable. As a result, recalling that $h:=\log \ell+x\sqrt{\log \ell}$,
		\be 
		\P{X_{n,\ell,2}(1)\leq h-1}=\P{\frac{2X_{n,\ell,2}(1)-2\log \ell}{2\sqrt{\log \ell}}\leq \frac{h-1-\log \ell}{\sqrt{\log \ell}}}=\Phi(x)+o(1), 
		\ee  
		as required, which concludes the proof.
	\end{proof}
	
	To finish this section, we use the results related to the truncated selection sets developed in Section~\ref{sec:pre} to extend Proposition~\ref{prop:limitfixell} to the case of multiple vertices. The choice of $t_n$ is imperative, and so we define, for some $(\ell_i)_{i\in[k]}\in [n]^k$,
	\be\label{eq:tnlab} 
	t_n:=\min_{i\in[k]}\lceil \log \ell_i\rceil. 
	\ee 
	We can then formulate the following result.
	
	\begin{proposition}\label{prop:condprobfixell}
		Fix $k\in\N$, let $(\ell_i)_{i\in[k]}$ be $k$ distinct integer-valued sequences such that $\ell_i$ increases with $n$ and $\ell_i$ diverges as $n\to\infty$ for all $i\in[k]$. Let, for $i\in[k]$, $h_i:=\log \ell_i+x_i\sqrt{\log \ell_i}$ and $d_i:=\log(n/\ell_i)+y_i\sqrt{\log (n/\ell_i)}$ if $\ell_i=o(n)$ and $d_i\in\N_0$ fixed otherwise, with $(x_i)_{i\in[k]},(y_i)_{i\in[k]}\in \R^k$ and let $t_n$ as in~\eqref{eq:tnlab}. Furthermore, recall the definition of $\mathrm{Pr}$ in~\eqref{eq:pr}. Then,
		\be\ba 
		\lim_{n\to\infty} \P{h_{n,1}\leq h_i, \d_n(i)\leq d_i, i\in[k]\,|\, \ell_n(i)=\ell_i, i\in[k]}=\prod_{i=1}^k \Phi(x_i)\mathrm{Pr}(y_i,\rho_i,\ell_i).
		\ea\ee 
	\end{proposition}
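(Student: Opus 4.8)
The plan is to mirror the proof of Proposition~\ref{prop:condprobmult}, replacing the degree conditioning by the label conditioning and the sets $\A,\B$ by $\wtA,\wtB$, and working throughout with the truncation sequence $t_n=\min_{i\in[k]}\lceil\log\ell_i\rceil$ from~\eqref{eq:tnlab}. Since $t_n\to\infty$ while $t_n\leq\log\ell_i\leq\ell_i$ for every $i\in[k]$ and all $n$ large, the same observation as in the proof of Lemma~\ref{lemma:h2n} shows that each of the events $\{\ell_n(i)=\ell_i\}$, $\{h_{n,1}(i)\leq h_i\}$ and $\{d_n(i)\leq d_i\}$ is measurable with respect to $\overline\S_{n,1}$ together with the coin flips attached to those truncated selection sets. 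Writing
\[
f_n(\bar J):=\P{h_{n,1}(i)\leq h_i,\ d_n(i)\leq d_i,\ \ell_n(i)=\ell_i,\ i\in[k]\,|\,\overline\S_{n,1}=\bar J},
\]
\[
g_n(\bar J):=\prod_{i=1}^k\P{h_{n,1}(i)\leq h_i,\ d_n(i)\leq d_i,\ \ell_n(i)=\ell_i\,|\,\S_{n,1}(i)=J_i},
\]
the first step is to check that $f_n=g_n$ on $\wtB$ for $n$ large: the argument of Lemma~\ref{lemma:probmultvert} applies unchanged, as it only uses that when the $J_i$ are pairwise disjoint the $k$ events decompose into functions of disjoint families of independent coin flips (the direction of the degree inequality plays no role there).

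Next I would discard the exceptional event $\{\overline\S_{n,1}\in\wtB^c\}$: since $f_n(\bar J)\leq\P{\ell_n(i)=\ell_i,\ i\in[k]\,|\,\overline\S_{n,1}=\bar J}$, we get $\E{f_n(\overline\S_{n,1})\ind_{\{\overline\S_{n,1}\in\wtB^c\}}}\leq\P{\ell_n(i)=\ell_i,\ i\in[k],\ \overline\S_{n,1}\in\wtB^c}=o(n^{-k})$ by~\eqref{eq:wtbc}. On the complement I replace $f_n$ by $g_n$ and then, exactly as in~\eqref{eq:fngn2}, use Lemma~\ref{lemma:StoR} to substitute a tuple $\overline\CR_{n,1}$ of $k$ i.i.d.\ copies of $\S_{n,1}(1)$ for $\overline\S_{n,1}$, after first restricting to the part of $\wtB$ on which each $|J_i|$ obeys the bound in the definition of $\B$ --- a restriction that costs nothing, since it holds with probability $1-o(1)$ under both $\overline\S_{n,1}$ and $\overline\CR_{n,1}$ by (the proof of) Lemma~\ref{lemma:B}. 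The extra term $\E{g_n(\overline\CR_{n,1})\ind_{\{\overline\CR_{n,1}\in\wtB^c\}}}$ is then bounded by $o(n^{-k})$ by rerunning the argument behind~\eqref{eq:wtbc} for independent copies. Collecting these bounds gives
\[
\P{h_{n,1}(i)\leq h_i,\ d_n(i)\leq d_i,\ \ell_n(i)=\ell_i,\ i\in[k]}=(1+o(1))\,\E{g_n(\overline\CR_{n,1})}+o(n^{-k}).
\]

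To finish, I would use that the coordinates of $\overline\CR_{n,1}$ are i.i.d.\ copies of $\S_{n,1}(1)$, so that $\E{g_n(\overline\CR_{n,1})}=\prod_{i=1}^k\P{h_{n,1}(i)\leq h_i,\ d_n(i)\leq d_i,\ \ell_n(i)=\ell_i}=\prod_{i=1}^k\P{h_{n,1}(i)\leq h_i,\ d_n(i)\leq d_i\,|\,\ell_n(i)=\ell_i}\cdot\frac1n$, the last step using $\P{\ell_n(i)=\ell_i}=1/n$ from the one-vertex case of~\eqref{eq:labelprob}. Dividing by $\P{\ell_n(i)=\ell_i,\ i\in[k]}=1/(n)_k$ (again~\eqref{eq:labelprob}) and using $(n)_k=(1+o(1))n^k$ to absorb the additive $o(n^{-k})$ error after normalisation, the limit of the conditional probability equals $\lim_{n\to\infty}\prod_{i=1}^k\P{h_{n,1}(i)\leq h_i,\ d_n(i)\leq d_i\,|\,\ell_n(i)=\ell_i}$, which is $\prod_{i=1}^k\Phi(x_i)\,\mathrm{Pr}(y_i,\rho_i,\ell_i)$ by Proposition~\ref{prop:limitfixell}.

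The delicate point will be the passage from $\overline\S_{n,1}$ to the independent tuple $\overline\CR_{n,1}$ with all errors controlled at scale $n^{-k}$: because $\{\ell_n(i)=\ell_i,\ i\in[k]\}$ is a rare event of probability exactly $1/(n)_k$, the $o(1)$ slack that suffices in the degree setting of Proposition~\ref{prop:condprobmult} must be sharpened to $o(n^{-k})$ bounds on $\P{\ell_n(i)=\ell_i,\ i\in[k],\ \overline\S_{n,1}\in\wtB^c}$ and on its independent-copy analogue. Both reduce, via the disjointness that the conditioning forces on the truncated selection sets together with Lemma~\ref{lemma:tauk}, to estimates already recorded in Section~\ref{sec:pre}: two distinct vertices $i,i'$ can be selected together at a step $\geq t_n$ only before step $\max(\ell_i,\ell_{i'})$, and then only if the coin flip there favours neither the prescribed label of $i$ nor that of $i'$, producing a tail summable to $O(t_n^{-1})$ --- exactly the type of bound underlying~\eqref{eq:wtbc} and~\eqref{eq:taulabel}.
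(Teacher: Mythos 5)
Your proposal follows the paper's own proof essentially step for step: same reduction via $\P{\ell_n(i)=\ell_i,i\in[k]}=1/(n)_k$, same $f_n,g_n$, same use of Lemma~\ref{lemma:probmultvert}, Lemma~\ref{lemma:StoR}, and~\eqref{eq:wtbc} to control the $\wtB^c$ contributions at scale $o(n^{-k})$, and the same final factorisation and appeal to Proposition~\ref{prop:limitfixell}. The only difference is that you spell out two points the paper passes over silently — the need to sharpen the exceptional-set bounds from $o(1)$ to $o(n^{-k})$ (which indeed follows from~\eqref{eq:wtbc} and its independent-copy analogue), and the implicit extension of Lemma~\ref{lemma:StoR} to the truncation $t_n=\min_i\lceil\log\ell_i\rceil$ rather than $\lceil(\log n)^2\rceil$ — both of which are genuine subtleties that the proof of~\cite[Lemma $3.2$]{Esl16} accommodates, so your added care is appropriate rather than a deviation.
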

	
	\begin{remark} 
		As is the case in Remark~\ref{rem:limit}, it follows from Lemma~\ref{lemma:h2n} and Remark~\ref{rem:h2n} that the result in Proposition~\ref{prop:condprobfixell} holds when substituting $h_n(i)$ for $h_{n,1}(i)$ as well.
	\end{remark}
	
	\begin{proof}
		The proof follows a similar approach as the proof of Proposition~\ref{prop:condprobmult}. We first write  
		\be\ba\label{eq:fixlabcond}
		\mathbb P({}&h_{n,1}\leq h_i, \d_n(i)\leq d_i, i\in[k]\,|\, \ell_n(i)=\ell_i, i\in[k])\\
		&=\frac{ \P{h_{n,1}\leq h_i,  \ell_n(i)=\ell_i,\d_n(i)\leq d_i,  i\in[k]}}{ \P{ \ell_n(i)=\ell_i, i\in[k]}}\\
		&=(n)_k \P{h_{n,1}\leq h_i,  \ell_n(i)=\ell_i,\d_n(i)\leq d_i,  i\in[k]},
		\ea \ee 
		where the last step follows from Lemma~\ref{lemma:labeldegprob}. We then define
		\be \ba 
		f_n(\bar J)&:=\P{h_{n,1}(i)\leq h_i, \ell_n(i)=\ell_i, \d_n(i)\leq d_i, i\in[k]\, |\, \overline{\S}_{n,1}=\bar J},\\
		g_n(\bar J)&:=\prod_{i=1}^k\P{h_{n,1}(i)\leq h_i, \ell_n(i)=\ell_i, \d_n(i)\leq d_i,\, |\,\S_{n,1}(i)= J_i}.
		\ea \ee 
		With similar steps as in~\eqref{eq:fngn} through~\eqref{eq:probgn}, we then have
		\be \ba \label{eq:claim}
		\mathbb P({}&h_{n,1}(i)\leq h_i,\ell_n(i)=\ell_i, \d_n(i)\leq d_i, i\in[k])\\
		={}&\E{f_n(\overline{\S}_{n,1})\ind_{\{\overline{\S}_{n,1}\in \wtB\}}}+\E{f_n(\overline{\S}_{n,1})\ind_{\{\overline{\S}_{n,1}\in \wtB^c\}}}\\
		={}&\E{g_n(\overline{\CR}_{n,1})}(1+o(1))+\Big(\E{f_n(\overline{\S}_{n,1})\ind_{\{\overline{\S}_{n,1}\in \wtB^c\}}}-\E{g_n(\overline{\CR}_{n,1})\ind_{\{\overline{\CR}_{n,1}\in \wtB^c\}}}\Big).
		\ea\ee 
		It follows from~\eqref{eq:wtbc} in Lemma~\ref{lemma:labeldegprob} that 
		\be 
		\E{f_n(\overline{\S}_{n,1})\ind_{\{\overline{\S}_{n,1}\in \wtB^c\}}}=\P{\ell_n(i)=\ell_i, i\in[k], \overline \S_{n,1}\in \wtB^c}=o(n^{-k}).
		\ee 
		A similar argument as in the proof of~\eqref{eq:wtbc} can be used to show that 
		\be 
		\E{g_n(\overline \CR_{n,1})\ind_{\{\overline \CR_{n,1}\in \wt B^c\}}}=o(n^{-k}), 
		\ee 
		as well. As the elements of $\overline \CR_{n,1}$ are i.i.d., we have 
		\be \ba 
		\mathbb E[g_n(\overline{\CR}_{n,1})]&=\prod_{i=1}^k \P{h_{n,1}(i)\leq h_i, \ell_n(i)=\ell_i, \d_n(i)\leq d_i}\\
		&=\prod_{i=1}^k \P{h_{n,1}(i)\leq h_i, \d_n(i)\leq d_i\,|\, \ell_n(i)=\ell_i}\P{\ell_n(i)=\ell_i}.
		\ea\ee 
		The product on the right-hand side equals 
		\be 
		(1+o(1))n^{-k}\prod_{i=1}^k \Phi(y_i)\Pr(y_i,\rho_i,\ell_i)
		\ee 
		by Proposition~\ref{prop:limitfixell} and Lemma~\ref{lemma:labeldegprob}. Using this in~\eqref{eq:claim} yields
		\be \ba
		\mathbb P({}&h_{n,1}(i)\leq h_i,\ell_n(i)=\ell_i, \d_n(i)\leq d_i, i\in[k])
		=\frac{1+o(1)}{n^k}\prod_{i=1}^k \Phi(y_i)\Pr(y_i,\rho_i,\ell_i).
		\ea\ee 
		Combining this with~\eqref{eq:fixlabcond} then yields the desired result. 
	\end{proof}
	\newpage 
	
	\section{Proof of Theorem~\ref{thrm:degdepthlabelrrt}}\label{sec:proofppp}
	
	This section is devoted to proving Theorem~\ref{thrm:degdepthlabelrrt}. We first provide some additional theory on top of what is introduced in Section~\ref{sec:king} prior to stating the proof. 
	
	\subsection{Convergence of marked point processes via finite dimensional distributions}	
	
	We recall that, as discussed in Section~\ref{sec:king}, Theorem~\ref{thrm:degdepthlabelrrt} can be understood as the weak convergence of the marked point process $\CM\CP^{(n_t)}$ to $\CM\CP^\eps$, as defined in~\eqref{eq:pppn} and~\eqref{eq:limppp2}, respectively. The approach to prove this is via the convergence of its finite dimensional distributions (FDDs) along suitable subsequences. The FDDs of a random measure $\CP$ on $\CX$ are defined as the joint distributions, for all finite families of bounded Borel sets $(B_1, \ldots, B_k)\in \CB(\CX)^k$, of the random variables $(\CP(B_1), \ldots, \CP(B_k))$, see~\cite[Definition $9.2.$II]{DalVer08}. Moreover, by~\cite[Proposition $9.2.$III]{DalVer08}, the distribution of a random measure $\CP$ on $\CX$ is completely determined by the FDDs for all finite families $(B_1, \ldots, B_k)$ of \emph{disjoint} sets from a semiring $\CA$ that generates $\CB(\CX)$. In our case, we consider the marked point process $\CM\CP^{(n)}$ on $\CX:=\Z^*\times \R^2$, see~\eqref{eq:limppp2}. Here, we let
	\be \label{eq:A}
	\CA:=\{\{s\}\times (a,b]\times (c,d]: s\in \Z, a,b,c,d\in \R\}\cup \{[s,\infty]\times (a,b]\times (c,d]: s\in \Z, a,b,c,d\in \R\}
	\ee
	be the semiring that generates $\CB(\Z^*\times \R^2)$. 
	
	Recall the counting measures $X_s^{(n)}(B), X_{\geq s}^{(n)}(B), \wt X_s^{(n)}(B),\wt X_{\geq s}^{(n)}(B)$ defined in~\eqref{eq:xnking} (in terms of the Kingman $n$-coalescent) and $X_s(B), X_{\geq s}(B)$ defined in~\eqref{eq:xn}. We observe that \\ $\wt X_s^{(n)}(B)=\CM\CP^{(n)}(\{s\}\times B), \wt X_{\geq s}^{(n)}(B)=\CM\CP^{(n)}([s,\infty]\times B), X_s(B)=\CM\CP^\eps(\{s\}\times B)$ and $X_{\geq s}(B)=\CM\CP^\eps([s,\infty]\times B)$. As a result, the convergence of the FDDs of $\CM\CP^{(n_t)}$ to the FDDs of $\CM\CP^\eps$ can be obtained via the convergence of any finite collection of these counting measures.
	
	For any $K\in\N$, take any (fixed) increasing integer sequence $(s_m)_{m\in[K]}$ and let $0\leq   K':=\min\{m: s_{m+1}=s_K\}$. Also, fix any sequence $(B_m)_{m\in[K]}$ with $B_m\in\CB(\R^2)$ such that $B_m\cap B_\ell=\emptyset $ when $s_m=s_\ell$ and $m\neq \ell$. The conditions on the sets $B_m$ ensure that the elements $\{s_1\}\times B_1, \ldots, \{s_K'\}\times B_{K'}, \{s_{K'+1},\ldots\}\times B_{K'+1}, \ldots, \{s_K,\ldots\}\times B_K$ of $\CA$ are disjoint. We are thus required to prove the joint distributional convergence of the random variables
	\be\label{eq:xnseq}
	(\wt X_{s_1}^{(n)}(B_1),\ldots,\wt  X^{(n)}_{s_{K'}}(B_{K'}),\wt X^{(n)}_{\geq s_{K'+1}}(B_{K'+1}),\ldots, \wt X_{\geq s_K}^{(n)}(B_K)),
	\ee
	to prove Theorem~\ref{thrm:degdepthlabelrrt}. We use the method of moments combined with Proposition~\ref{prop:momentconv} to achieve this:
	
	\begin{proof}[Proof of Theorem~\ref{thrm:degdepthlabelrrt} subject to Proposition~\ref{prop:momentconv}]
		As discussed, it suffices to prove the weak convergence of $\CM\CP^{(n_t)}$ to $\CM\CP^\eps$ along subsequences $(n_t)_{t\in\N}$ such that $\eps_{n_t}\to \eps$ (where $\eps\in[0,1]$) as $t\to\infty$. In turn, this is implied by the convergence of the FDDs, i.e., by the joint convergence of the counting measures $\wt X_s^{(n)}(B),\wt X_{\geq s}^{(n)}(B)$ of finite collections of disjoint subsets of $\mathcal A$ (see~\eqref{eq:A}). 
		
		We recall that the points $P_i$ in the definition of the variables $X_s(B),X_{\geq s}(B)$ in~\eqref{eq:xn} are the points of the Poisson point process $\CP$ with intensity measure $\lambda(\d x):= 2^{-x}\log 2\,\d x$ in decreasing order. As a result, as the random variables $(M_i,N_i)_{i\in\N}$ are i.i.d.\ and also independent of $\CP$, $X_s(B)\sim \text{Poi}(\lambda_s(B)), X_{\geq s}(B)\sim \text{Poi}(2\lambda_s(B))$, where 
		\be 
		\lambda_s(B)=2^{-(s+1)+\eps}\P{M_1\sqrt{1-\frac{\mu}{\sigma^2}}+N_1\sqrt{\frac{\mu}{\sigma^2}}\in B}.
		\ee 
		We also recall that $(n_\ell)_{\ell\in\N}$ is a subsequence such that $\eps_{n_\ell}\to \eps$ as $\ell\to\infty$. We now take $c\in(1/\log 2,2)$ and for any $K\in\N$ consider any fixed non-decreasing integer sequence $(s_m)_{m\in[K]}$. By the choice of $c$ and the fact that the $s_m$ are fixed with respect to $n$, $s_1+\log_2 n=\omega(1)$ and $s_K+\log_2 n <c\log n$ for all $n\geq 2$ follow. Moreover, let $K':=\min\{m: s_{m+1}=s_K\}$ and let $(B_m)_{m\in[K]}$ be a sequence of sets in $\CB(\R^2)$ such that $B_m\cap B_\ell=\emptyset$ when $s_m=s_\ell$ and $m\neq \ell$.  We can then, for any $(c_m)_{m\in[K]}\in \N_0^K$, obtain from Proposition~\ref{prop:momentconv} and since $s_1, \ldots, s_K=o(\sqrt{\log n})$,
		\be  \ba
		\lim_{n\to\infty}{}&\mathbb E\bigg[\prod_{m=1}^{K'}\Big(\wt X_{s_m}^{(n_\ell)}(B_m)\Big)_{c_m}\prod_{m=K'+1}^K\!\!\Big(\wt X_{\geq s_m}^{(n_\ell)}(B_m)\Big)_{c_m} \bigg]
		\\
		&=\prod_{m=1}^{K'}\lambda_{s_m}(B_m)^{c_m}\prod_{m=K'+1}^K(2\lambda_{s_m}(B_m))^{c_m}\\
		&=\mathbb E\bigg[\prod_{m=1}^{K'}\Big(X_{s_m}(B_m)\Big)_{c_m}\prod_{m=K'+1}^K\!\!\Big(X_{\geq s_m}(B_m)\Big)_{c_m} \bigg],
		\ea \ee
		where the last step follows from the independence property of (marked) Poisson point processes and the choice of the sequences $(s_m, B_m)_{m\in[K]}$. The method of moments~\cite[Section $6.1$]{JanLucRuc00} then concludes the proof.
	\end{proof}
	
	It remains to prove Proposition~\ref{prop:momentconv}.
	
	\begin{proof}[Proof of Proposition~\ref{prop:momentconv}]
		The proof essentially follows a similar approach as the proof of~\cite[Proposition $5.4$]{Lod21}. However, as certain estimations and definitions differ, we include it here for completeness. 
		
		Recall $\mu=1-1/(2\log 2)$, $\sigma^2=1-1/(4\log 2)$, and that we have fixed $K\in\N$ and $(a_m)_{m\in[K]}\in [0,2)^K$. Moreover, we have a non-decreasing integer sequence $(s_m)_{m\in[K]}$ with $s_1+\log_2 n =\omega(1)$ and 
		\be 
		\lim_{n\to\infty} \frac{s_m+\log_2 n}{\log n}=a_m, 
		\ee
		for all $m\in[K]$, and a sequence $(B_m)_{m\in[K]}$ such that $B_m\in \CB(\R^2)$ for all $m\in[K]$ and $B_m\cap B_\ell=\emptyset$ when $s_m=s_\ell$ and $m\neq \ell$. We also define $K':=\min\{m: s_{m+1}=s_K\}$. Finally, we recall that $M$ and $N$ are two independent standard normal random variables. Then, take an arbitrary sequence $(c_m)_{m\in[K]}\in \N_0^K$ and set  $L:=\sum_{m=1}^K c_m$ and $L':=\sum_{m=1}^{K'}c_m$.
		
		We define $\bar d=(d_i)_{i\in[L]}\in \Z^L, (a'_i)_{i\in[L]},$ and $\bar A=(A_i)_{i\in[L]}\subset \CB(\R^2)^L$ as follows: For each $i\in[L]$, find the unique $m\in[K]$ such that $\sum_{\ell=1}^{m-1}c_\ell<i\leq \sum_{\ell=1}^m c_\ell$ and define $d_i:=\lfloor \log_2 n\rfloor +s_m$, $a'_i:=a_m$, $ A_i:=B_m$. We note that this construction implies that the first $c_1$ many $d_i,a_i'$ and $A_i$ equal $\lfloor \log_2 n\rfloor +s_1,a_1$ and $B_1$, respectively, that the next $c_2$ many $d_i,a'_i$ and $A_i$ equal $\lfloor \log_2 n\rfloor +s_2,a_2$ and $B_2$, respectively, etcetera. Furthermore, $\lim_{n\to\infty}d_i/\log n=a'_i$ for all $i\in[L]$. We then define the events
		\be \ba 
		\CH\CL_{\bar A,\bar d}&:=\Big\{\Big(\frac{h_n(i)-(\log n-d_i/2)}{\sqrt{\log n-d_i/4}}, \frac{\log \ell_n(i)-(\log n-d_i/2)}{\sqrt{d_i/4}}\Big)\in A_i, i\in[L]\Big\},\\
		\CD_{\bar d}(L',L)&:=\{\d_n(i')=d_i', i'\in[L'], \d_n(i)\geq d_i, L'<i\leq L\},\\
		\CE_{\bar d}(S)&:=\{\d_n(i)\geq d_i+\ind_{\{i\in S\}}, i\in [L]\}.
		\ea \ee 
		We know from~\cite[Lemma $5.1$]{AddEsl18} that by the inclusion-exclusion principle, 
		\be\label{eq:inex}
		\P{\CD_{\bar d}(L',L)}=\sum_{j=0}^{L'}\sum_{\substack{ S\subseteq [L']:\\ |S|=j}}(-1)^j\P{\CE_{\bar d}(S)},		
		\ee
		so that intersecting the event $\CH\CL_{\bar A,\bar d}$ in the probabilities on both sides yields
		\be \label{eq:probdhl}
		\P{\CD_{\bar d}(L',M)\cap \CH\CL_{\bar A,\bar d}}=\sum_{j=0}^{L'}\sum_{\substack{ S\subseteq [L']:\\ |S|=j}}(-1)^j\P{\CE_{\bar d}(S)\cap \CH\CL_{\bar A,\bar d}}.
		\ee 
		Let us then define 
		\be \ba
		\wt{\CH\CL}_{\bar A,\bar d}(S):=\Big\{\Big({}&\frac{h_n(i)-(\log n-(d_i+\ind_{\{i\in S\}})/2)}{\sqrt{\log n-(d_i+\ind_{\{i\in S\}})/4}},\\
		& \frac{\log \ell_n(i)-(\log n-(d_i+\ind_{\{i\in S\}})/2)}{\sqrt{(d_i+\ind_{\{i\in S\}})/4}}\Big)\in A_i, i\in[L]\Big\}.
		\ea \ee 
		We use Proposition~\ref{prop:condprobmult} (combined with Remark~\ref{rem:condprobmult}) with $a_i'=\lim_{n\to\infty}(d_i+\ind_{\{i\in S\}})/\log n$ for all $i\in[L]$ and Proposition~\ref{prop:deg} to then obtain
		\be \ba
		\mathbb P{}&\Big(\CE_{\bar d}(S)\cap \wt{\CH\CL}_{\bar A,\bar d}(S)\Big)\\
		&=\P{ \wt{\CH\CL}_{\bar A,\bar d}(S)\,\Big|\,\CE_{\bar d}(S)}\P{\CE_{\bar d}(S)}\\
		&=(1+o(1))2^{-\sum_{i=1}^L (d_i+\ind_{\{i\in S\}})}\prod_{i=1}^L \P{\bigg(M\sqrt{\frac{a_i'}{4-a_i'}}+N\sqrt{1-\frac{a_i'}{4-a_i'}}, M\bigg)\in A_i}.
		\ea \ee 
		Since
		\be \ba
		\Big({}&\frac{h_n(i)-(\log n-(d_i+\ind_{\{i\in S\}})/2)}{\sqrt{\log n-(d_i+\ind_{\{i\in S\}})/4}},
		\frac{\log \ell_n(i)-(\log n-(d_i+\ind_{\{i\in S\}})/2)}{\sqrt{(d_i+\ind_{\{i\in S\}})/4}}\Big)\\
		&=(1+o(1))\Big(\frac{h_n(i)-(\log n-d_i/2)}{\sqrt{\log n-d_i/4}},
		\frac{\log \ell_n(i)-(\log n-d_i/2)}{\sqrt{d_i/4}}\Big),
		\ea \ee 
		we also obtain from Slutsky's theorem~\cite[Lemma $2.8$]{Vaart00} that 
		\be\ba 
		\mathbb P{}&(\CE_{\bar d}(S)\cap \CH\CL_{\bar A,\bar d})\\
		&=(1+o(1))2^{-\sum_{i=1}^L (d_i+\ind_{\{i\in S\}})}\prod_{i=1}^L  \P{\bigg(M\sqrt{\frac{a_i'}{4-a_i'}}+N\sqrt{1-\frac{a_i'}{4-a_i'}}, M\bigg)\in A_i}.
		\ea \ee 
		The right-hand side of~\eqref{eq:probdhl} then equals 
		\be \label{eq:condlimit2}
		\prod_{i=1}^L\bigg[ \P{\bigg(M\sqrt{\frac{a_i'}{4-a_i'}}+N\sqrt{1-\frac{a_i'}{4-a_i'}}, M\bigg)\in A_i}\bigg]\sum_{j=0}^L\sum_{\substack{ S\subseteq [L']:\\ |S|=j}}\frac{(1+o(1))(-1)^j}{ 2^{\sum_{i=1}^L (d_i+\ind_{\{i\in S\}})}},
		\ee 
		where the product is independent of $S$ and $j$ and can therefore be taken out of the double sum. The double sum equals
		\be \label{eq:doublesum}
		(1+o(1))\sum_{j=0}^L\sum_{\substack{ S\subseteq [L']:\\ |S|=j}}\!\!\!(-1)^j 2^{-j-\sum_{i=1}^L d_i}=(1+o(1))2^{-L'-\sum_{i=1}^L d_i}.
		\ee 
		Now, recall the definition of the variables $X_s^{(n)}(B),X_{\geq s}^{(n)}(B)$ as in~\eqref{eq:xnking}. Combining~\eqref{eq:probdhl}, \eqref{eq:condlimit2} and \eqref{eq:doublesum} together with the exchangeability of the degree, depth, and label of vertices $1,\ldots, K$, we arrive at
		\be\ba \label{eq:bigprod}
		\mathbb E\Bigg[{}&\prod_{m=1}^{K'}\Big(X_{s_m}^{(n)}(B_m)\Big)_{c_m}\prod_{m=K'+1}^K \Big(X_{\geq s_m}^{(n)}(B_m)\Big)_{c_m}\Bigg]\\
		={}&(n)_L \P{ \CD_{\bar d}(L',L)\cap \CH\CL_{\bar A,\bar d} }\\
		={}&(1+o(1))2^{L\log_2 n-L'-\sum_{i=1}^L d_i}\prod_{i=1}^L\P{\bigg(M\sqrt{\frac{a_i'}{4-a_i'}}+N\sqrt{1-\frac{a_i'}{4-a_i'}}, M\bigg)\in A_i},
		\ea\ee 
		since $(n)_L:=n(n-1)\cdots (n-(L-1))=(1+o(1))n^L$. We now recall that there are exactly $c_m$ many $d_i,a'_i,$ and $A_i$ that equal $\lfloor \log_2n\rfloor +s_m,a_m,$ and $B_m$, respectively, for each $m\in[K]$ and that $s_{K'+1}=\ldots =s_K$, so that 
		\be \ba \label{eq:mult}
		\prod_{i=1}^L{}& \P{\bigg(M\sqrt{\frac{a_i'}{4-a_i'}}+N\sqrt{1-\frac{a_i'}{4-a_i'}}, M\bigg)\in A_i}\\
		&=\prod_{m=1}^K \P{\bigg(M\sqrt{\frac{a_m}{4-a_m}}+N\sqrt{1-\frac{a_m}{4-a_m}}, M\bigg)\in B_m}^{c_m},\\
		L\log_2 n-L'-\sum_{i=1}^L d_i &= -\sum_{m=1}^{K'}(s_m+1-\eps_n)c_m-\sum_{m=K'+1}^K (s_K-\eps_n)c_m.
		\ea \ee 
		Combined with~\eqref{eq:bigprod}, this finally yields
		\be \ba\label{eq:fin}
		\mathbb E\Bigg[\prod_{m=1}^{K'}{}&\bigg(X_{s_m}^{(n)}(B_m)\bigg)_{c_m}\prod_{m=K'+1}^K \bigg(X_{\geq s_m}^{(n)}(B_m)\bigg)_{c_m}\Bigg]\\
		={}&(1+o(1))\prod_{m=1}^{K'} \bigg(\P{\bigg(M\sqrt{\frac{a_m}{4-a_m}}+N\sqrt{1-\frac{a_m}{4-a_m}}, M\bigg)\in B_m}2^{-(s_m+1)+\eps_n}\bigg)^{c_m}\\
		&\times \prod_{m=K'+1}^K \bigg(\P{\bigg(M\sqrt{\frac{a_m}{4-a_m}}+N\sqrt{1-\frac{a_m}{4-a_m}}, M\bigg)\in B_m}2^{-s_K+\eps_n}\bigg)^{c_m}.
		\ea \ee 
		To prove the second result in Proposition~\ref{prop:momentconv}, we use that for $s_1, \ldots, s_K=o(\sqrt{\log n})$, 
		\be \ba
		\Big({}&\frac{h_n(i)-(\log n-d_i/2)}{\sqrt{\log n-d_i/4}}, \frac{\log \ell_n(i)-(\log n-d_i/2)}{\sqrt{d_i/4}}\Big)\\
		&=(1+o(1))\Big(\frac{h_n(i)-\mu\log n}{\sqrt{\sigma^2\log n}}, \frac{\log \ell_n(i)-\mu\log n}{\sqrt{(1-\sigma^2)\log n}}\Big),
		\ea \ee 
		and that in this case, $a_m=\lim_{n\to\infty}(s_m+\log_2n)/\log n=1/\log 2$ for all $m\in[K]$. As a result, noting that 
		\be 
		\frac{1/\log 2}{4-1/\log 2}=1-\mu/\sigma^2,
		\ee
		a similar approach as the above proof for the random variables $\wt X_s^{(n)}(B),\wt X_{\geq s}^{(n)}(B)$ yields the desired result.
	\end{proof}

	\section{Proof of Theorems ~\ref{thrm:condconvking} and~\ref{thrm:kingfixlabel}}\label{sec:kingproof}
	
	In this section we provide the final steps that build on Propositions~\ref{prop:limit} and~\ref{prop:limitfixell} to prove Theorems~\ref{thrm:condconvking} and~\ref{thrm:kingfixlabel}. In particular, we show how to include the graph distance between vertices $1,\ldots, k$ in the Kingman $n$-coalescent. As mentioned at the end of Section~\ref{sec:king}, combining Theorems~\ref{thrm:condconvking} and~\ref{thrm:kingfixlabel} with Corollary~\ref{cor:unifnodes} then immediately implies Theorems~\ref{thrm:condconvrrt} and~\ref{thrm:fixedlabel}, respectively.
	
	Intuitively, the graph distance between vertices can be related to their (truncated) depth. By the definition of $\tau_k$, the largest common ancestor of any two distinct vertices $i,j\in [k]$ in the random recursive tree has label at most $\tau_k$ and hence the sum of the depths and truncated depths of vertices $i$ and $j$ form an upper and lower bound for the graph distance between these vertices in the Kingman $n$-coalescent, respectively. Since the depth and the truncated depth are asymptotically equal under certain constraints on the truncation sequence $t_n$ (see Lemma~\ref{lemma:h2n} and Remark~\ref{rem:h2n}), and since $(\tau_k)_{n\in\N}$ forms a tight sequence of random variables by Lemma~\ref{lemma:tauk}, these bounds on the graph distance are sufficiently sharp. Using the largest common ancestor to provide a lower bound on the graph distance has been used by Munsonius and R\"uschendorf for $b$-are recursive trees~\cite{MunRus11} and by Ryvkina for random split trees~\cite{Ryv08}, previously.
	
	We formalise the above intuition in the remainder of the section, in which we prove Theorems~\ref{thrm:condconvking} and~\ref{thrm:kingfixlabel}.
	
	\begin{proof}[Proof of Theorem~\ref{thrm:condconvking}]
		We prove~\eqref{eq:withlab}. The proof of~\eqref{eq:withoutlab} uses an analogous approach with~\eqref{eq:multres} in Proposition~\ref{prop:condprobmult}, and hence the proof is omitted. 
		
		We set $t_n=\lceil (\log n)^2\rceil$, $\mathcal D_k:=\{\d_n(i)\geq d_i, i\in[k]\}$ and, for ease of writing, let $f_i:=\log n-d_i/2$ and recall that $d_i$ diverges with $n$ such that $a_i:=\lim_{n\to\infty} d_i/\log n$ exists for all $i\in[k]$. From Proposition~\ref{prop:condprobmult}, we obtain that the tuple
		\be 
		\Big(\frac{h_{n,1}(i)-f_i}{\sqrt{\log n-d_i/4}},\frac{\log \ell_n(i)-f_i}{\sqrt{d_i/4}}\Big)_{i\in[k]},
		\ee 
		conditionally on the event $\mathcal D_k$, converges in distribution to 
		\be 
		\Big(M_i\sqrt{\frac{a_i}{4-a_i}}+N_i\sqrt{1-\frac{a_i}{4-a_i}}, M_i\Big)_{i\in[k]},
		\ee 
		where we recall that $a_i:=\lim_{n\to\infty} d_i/\log n$. Moreover, by the choice of $t_n$ at the start of the proof, combining the above result with Lemma~\ref{lemma:h2n} and Remark~\ref{rem:h2n} yields the same result when substituting $h_n(i)$ for $h_{n,1}(i)$. What remains is to include the graph distance between the vertices $1,\ldots, k$ to prove Theorem~\ref{thrm:condconvking}. We use the trivial upper bound $\dist_n(i,j)\leq h_n(i)+h_n(j)$ $i,j\in[n]$ to obtain 
		\be \ba \label{eq:distub}
		\bigg({}&\Big(\frac{h_n(i)-f_i}{\sqrt{\log n-d_i/4}},\frac{\log \ell_n(i)-f_i}{\sqrt{d_i/4}}\Big)_{i\in[k]},\Big(\frac{\dist_n(i,j)-(f_i+f_j)}{\sqrt{2\log n-(d_i+d_j)/4}}\Big)_{1\leq i<j\leq k}\bigg)\\
		&\leq 	\bigg(\Big(\frac{h_n(i)-f_i}{\sqrt{\log n-d_i/4}},\frac{\log \ell_n(i)-f_i}{\sqrt{d_i/4}}\Big)_{i\in[k]},\Big(\frac{h_n(i)+h_n(j)-(f_i+f_j)}{\sqrt{2\log n-(d_i+d_j)/4}}\Big)_{1\leq i<j\leq k}\bigg),
		\ea\ee 
		where the inequality holds element-wise and almost surely. We now observe that 
		\be \ba \label{eq:sumsplit}
		\frac{h_n(i)+h_n(j)-(f_i+f_j)}{\sqrt{2\log n-(d_i+d_j)/4}}={}&\frac{h_n(i)-(f_i)}{\sqrt{\log n-d_i/4}}\sqrt{\frac{\log n-d_i/4}{2\log n-(d_i+d_j)/4}}\\
		&+\frac{h_n(j)-f_j}{\sqrt{\log n-d_j/4}}\sqrt{\frac{\log n-d_j/4}{2\log n-(d_i+d_j)/4}}.
		\ea \ee 
		Since $d_i/\log n\to a_i$, it follows that the two deterministic square root terms on the right-hand side converge to $\sqrt{(4-a_i)/(8-(a_i+a_j))}$ and $\sqrt{(4-a_j)/(8-(a_i+a_j))}$, respectively. Furthermore, by the joint convergence of the depth and label of vertices $1,\ldots, k$, conditionally on $\mathcal D_k$, it thus follows from the continuous mapping theorem~\cite{Bill99} and Slutsky's theorem~\cite[Lemma $2.8$]{Vaart00}, that 
		\be \ba
		\bigg({}&\Big(\frac{h_n(i)-f_i}{\sqrt{\log n-d_i/4}},\frac{\log \ell_n(i)-f_i}{\sqrt{d_i/4}}\Big)_{i\in[k]},\Big(\frac{h_n(i)+h_n(j)-(f_i+f_j)}{\sqrt{2\log n-(d_i+d_j)/4}}\Big)_{1\leq i<j\leq k}\bigg)\\
		&\toindis \bigg(\Big(M_i\sqrt{\frac{a_i}{4-a_i}}+N_i\sqrt{1-\frac{a_i}{4-a_i}}, M_i\Big)_{i\in[k]},\\
		&\hphantom{\toindis \bigg(\ } \Big(\frac{M_i\sqrt{a_i}+N_i\sqrt{4-2a_i}+M_j\sqrt{a_j}+N_j\sqrt{4-2a_j}}{\sqrt{8-(a_i+a_j)}}\Big)_{1\leq i<j\leq k}\bigg).
		\ea \ee 
		Combined with~\eqref{eq:distub}, and letting, for $(x_i,y_i)_{i\in[k]}\in (\R^2)^k$, and $(z_{i,j})_{1\leq i<j\leq k}\in \R^{k(k-1)/2}$ fixed,
		\be \ba \label{eq:terms}
		h_i&:=(\log n-d_i/2)+x_i\sqrt{\log n-d_i/4}, \quad \wt \ell_i:=(\log n-d_i/2)+y_i\sqrt{d_i/4},\\ 
		L_{i,j}&:=(2\log n-(d_i+d_j)/2)+z_{i,j}\sqrt{2\log n-(d_i+d_j)/4}, 
		\ea\ee 
		this yields,
		\be\ba  \label{eq:liminf2}
		\liminf_{n\to\infty}\mathbb P ({}&h_n(i)\leq h_i,\log \ell_n(i)\leq \wt\ell_i,i\in[k], \dist_n(i,j)\leq L_{i,j}, 1\leq i<j\leq k\,|\,\mathcal D_k)\\
		\geq \mathbb P \bigg({}&M_i\sqrt{\frac{a_i}{4-a_i}}+N_i\sqrt{1-\frac{a_i}{4-a_i}}\leq x_i, M_i\leq y_i, i\in[k],\\
		&\frac{M_i\sqrt{a_i}+N_i\sqrt{4-2a_i}+M_j\sqrt{a_j}+N_j\sqrt{4-2a_j}}{\sqrt{8-(a_i+a_j)}}\leq z_{i,j}, 1\leq i<j\leq k \bigg).
		\ea \ee 
		It remains to obtain a matching lower bound. We make use of the following observation: In the Kingman $n$-coalescent process, assume two vertices $i_1,i_2$ are in distinct trees at step $j$ of the coalescent. Then, the sum of their depths at step $j$ is bounded from above by the graph distance between $i_1$ and $i_2$ in the final tree of the coalescent. That is, $h_{F_j}(i_1)+h_{F_j}(i_2)\leq \dist_{F_1}(i_1,i_2)$ on the event that $i_1,i_2$ are in two distinct trees in the forest $F_j$. See Figure~\ref{fig:king} for an example, where the graph distance between vertices $1$ and $3$ in $F_1$ is larger than the sum of the depths of $1$ and $3$ in $F_2$.
		
		This observation allows us to use the truncated depths $h_{n,1}(i)$ to bound the graph distances between the vertices $1,\ldots, k$. Indeed, $h_{n,1}(i)=h_{F_{t_n}}(i)$ denotes the depth of vertex $i$ in the tree at the truncation time $t_n$. Recall that the event $\{\tau_k<t_n\}$ denotes that the vertices $1,\ldots, k$ are in distinct trees at step $t_n$, which holds with high probability by Lemma~\ref{lemma:tauk}. For $h_i,\wt\ell_i,L_{i,j}$ as in~\eqref{eq:terms}, we thus have
		\be\ba 
		\mathbb P (h_n{}&(i)\leq h_i,\log \ell_n(i)\leq \wt \ell_i,i\in[k],\dist_n(i,j)\leq L_{i,j}, 1\leq i<j\leq k\,|\, \mathcal D_k)\\
		\leq \mathbb P({}&h_{n,1}(i)\leq h_i,\log \ell_n(i)\leq \wt \ell_i,i\in[k],h_{n,1}(i)+h_{n,1}(j)\leq L_{i,j}, 1\leq i<j\leq k,\tau_k<t_n\,|\, \mathcal D_k)\\
		&+\mathbb P(\tau_k\geq t_n\,|\, \mathcal D_k)\\
		\leq \mathbb P({}&h_{n,1}(i)\leq h_i,\log \ell_n(i)\leq \wt \ell_i,i\in[k],h_{n,1}(i)+h_{n,1}(j)\leq L_{i,j}, 1\leq i<j\leq k\,|\,\mathcal D_k)\\
		&+\mathbb P(\tau_k\geq t_n\,|\,\mathcal D_k).
		\ea\ee 
		The last term tends to zero with $n$ by Lemma~\ref{lemma:tauk}. With the same approach as in~\eqref{eq:sumsplit} and~\eqref{eq:liminf2}, we thus obtain
		\be\ba 
		\limsup_{n\to\infty}\mathbb P ({}&h_n(i)\leq h_i,\log \ell_n(i)\leq \wt \ell_i,i\in[k],\dist_n(i,j)\leq L_{i,j}, 1\leq i<j\leq k\,|\, \CD_k)\\
		\leq \mathbb P\bigg({}&M_i\sqrt{\frac{a_i}{4-a_i}}+N_i\sqrt{1-\frac{a_i}{4-a_i}}\leq x_i, M_i\leq y_i,i\in[k],\\
		& \frac{M_i\sqrt{a_i}+N_i\sqrt{4-2a_i}+M_j\sqrt{a_j}+N_j\sqrt{4-2a_j}}{\sqrt{8-(a_i+a_j)}}\leq z_{i,j}, 1\leq i<j\leq k\bigg).
		\ea\ee 
		Combined with the matching lower bound which follows from~\eqref{eq:liminf2}, this concludes the proof.
	\end{proof}
	
	In a similar spirit, we prove Theorem~\ref{thrm:kingfixlabel}. Again, combined with Corollary~\ref{cor:unifnodes}, this implies Theorem~\ref{thrm:fixedlabel}.
	
	\begin{proof}[Proof of Theorem~\ref{thrm:kingfixlabel}]
		The proof follows a similar approach to the proof of Theorem~\ref{thrm:condconvking}. Recall the random variables $(d^*_n(i))_{i\in[k]}$ and $(Z_i)_{i\in[k]}$ from~\eqref{eq:dstar} and set $t_n=\min_{i\in[k]}\log \ell_i$. Proposition~\ref{prop:limitfixell} provides that the tuple
		\be \label{eq:jointfixell}
		\Big(d^*_n(i), \frac{h_{n,1}(i)-\log \ell_i}{\sqrt{\log \ell_i}}\Big)_{i\in[k]},
		\ee 
		conditionally on the event $\CL_k:=\{\ell_n(i)=\ell_i,i\in[k]\}$, converges in distribution to $(Z_i,N_i)_{i\in[k]}$, where the $N_i$ are i.i.d.\ standard normal random variables, also independent of the $Z_i$. By our choice of $t_n$, Lemma~\ref{lemma:h2n} and Remark~\ref{rem:h2n} yield that the result holds when $h_{n,1}(i)$ is substituted by $h_n(i)$ as well. As in~\eqref{eq:distub}, we can use the trivial upper bound $\dist_n(i,j)\leq h_n(i)+h_n(i), i,j\in[n]$. We can thus write, similar to~\eqref{eq:sumsplit}, 
		\be \ba \label{eq:distubfixell}
		\frac{\dist_n(i,j)-(\log \ell_i+\log \ell_j)}{\sqrt{\log \ell_i+\log\ell_j}}\leq{}& \frac{h_n(i)-\log \ell_i}{\sqrt{\ell_i}}\sqrt{\frac{\log \ell_i}{\log\ell_i+\log\ell_j}}\\
		&+\frac{h_n(j)-\log \ell_j}{\sqrt{\log \ell_j}}\sqrt{\frac{\log\ell_j}{\log\ell_i+\log\ell_j}}.
		\ea \ee 
		Define, for $(y_i)_{i\in[k]}\in \R^k, (z_{i,j})_{1\leq i<j\leq k}\in \R^{k(k-1)/2}$ fixed, 
		\be \label{eq:termsfixell}
		h_i:=\log\ell_i+y_i\sqrt{\log\ell_i}, \qquad L_{i,j}:=(\log\ell_i+\log\ell_j)+z_{i,j}\sqrt{\log\ell_i+\log\ell_j}, \quad 1\leq i<j\leq k.
		\ee 
		Recall the limits $c_{i,j},c_{j,i}$ of the two square-root terms on the right-hand side of~\eqref{eq:distubfixell} from~\eqref{eq:cij}. We thus obtain, for $(x_i)_{i\in[k]}\in \R^k$ fixed, by~\eqref{eq:distubfixell} and~\eqref{eq:jointfixell} (and the remark on the $h_n(i)$ below~\eqref{eq:jointfixell}) together with the continuous mapping theorem~\cite{Bill99},
		\be \ba \label{eq:liminf}
		\liminf_{n\to\infty}{}&\P{d^*_n(i)\leq x_i,h_n(i)\leq h_i,i\in[k], \dist_n(i,j)\leq L_{i,j}, 1\leq i<j\leq k\,|\,\CL_k}\\
		&\geq \P{Z_i\leq x_i, N_i\leq y_i,i\in[k], c_{i,j}N_i+c_{j,i}N_j\leq z_{i,j},1\leq i<j\leq k}.
		\ea \ee 
		We now use the same observation made after~\eqref{eq:liminf2}. That is, on the event $\{\tau_k<t_n\}$, it holds that $\dist_n(i,j)\geq h_{n,1}(i)+h_{n,1}(j)$ for any two distinct vertices $i,j\in[k]$. We hence have 
		\be \ba 
		\mathbb P({}&d^*_n(i)\leq x_i,h_n(i)\leq h_i,i\in[k], \dist_n(i,j)\leq L_{i,j}, 1\leq i<j\leq k\,|\,\CL_k)\\
		\leq{}&\mathbb P(d^*_n(i)\leq x_i,h_n(i)\leq h_i,i\in[k], h_{n,1}(i)+h_{n,1}(j)\leq L_{i,j}, 1\leq i<j\leq k,\tau_k<t_n\,|\,\CL_k)\\
		&+\P{\tau_k\geq t_n\,|\, \CL_k}\\
		\leq{}&\mathbb P(d^*_n(i)\leq x_i,h_n(i)\leq h_i,i\in[k], h_{n,1}(i)+h_{n,1}(j)\leq L_{i,j}, 1\leq i<j\leq k\,|\,\CL_k)\\
		&+\P{\tau_k\geq t_n\,|\, \CL_k}.
		\ea\ee 
		The last term on the right-hand side tends to zero by Lemma~\ref{lemma:labeldegprob}. Using the right-hand side of~\eqref{eq:distubfixell} to rewrite the event $\{h_{n,1}(i)+h_{n,1}(j)\leq L_{i,j}\}$, we thus obtain 
		\be\ba 
		\limsup_{n\to\infty}{}&\mathbb P(d^*_n(i)\leq x_i,h_n(i)\leq h_i,i\in[k], \dist_n(i,j)\leq L_{i,j}, 1\leq i<j\leq k\,|\,\CL_k)\\
		\leq{}& \P{Z_i\leq x_i, N_i\leq y_i,i\in[k], c_{i,j}N_i+c_{j,i}N_j\leq z_{i,j},1\leq i<j\leq k},
		\ea\ee 
		which matches the lower bound in~\eqref{eq:liminf} and concludes the proof.
	\end{proof} 	
	
	\noindent \textbf{Acknowledgements}\\
	Bas Lodewijks has been supported by grant GrHyDy ANR-20-CE40-0002, and would like to thank Laura Eslava for some useful discussions related to the Kingman $n$-coalescent and for providing the source code of the figures in this paper.
	
	He would also like to thank the anonymous referees for providing helpful suggestions which led to an improved presentation of the results and proofs.
	
	\bibliographystyle{abbrv}
	\bibliography{wrtbib}

\end{document}